\documentclass{article}

\IfFileExists{neurips_2026.sty}{\usepackage[preprint]{neurips_2026}}{}
\makeatletter
\renewcommand{\@notice}{}
\makeatother
\usepackage[utf8]{inputenc}
\usepackage[T1]{fontenc}
\usepackage{microtype}
\usepackage{booktabs}
\usepackage{url}
\usepackage{amsmath,amssymb,amsthm,mathtools}
\usepackage{algorithm}
\usepackage{algpseudocode}
\usepackage{enumitem}
\usepackage{graphicx}
\graphicspath{{EXP/}{./}}
\usepackage{xcolor}
\usepackage[hidelinks]{hyperref}
\usepackage[nameinlink,capitalize,noabbrev]{cleveref}

\theoremstyle{plain}
\newtheorem{theorem}{Theorem}[section]
\newtheorem{lemma}[theorem]{Lemma}
\newtheorem{proposition}[theorem]{Proposition}
\newtheorem{corollary}[theorem]{Corollary}
\newtheorem{assumption}[theorem]{Assumption}

\theoremstyle{definition}

\newtheorem{remark}[theorem]{Remark}

\newcommand{\R}{\mathbb{R}}
\newcommand{\E}{\mathbb{E}}

\newcommand{\PP}{\mathbb{P}}
\newcommand{\Pb}{\mathbb{P}}
\newcommand{\ip}[2]{\left\langle #1,#2\right\rangle}
\newcommand{\norm}[1]{\left\|#1\right\|}
\newcommand{\abs}[1]{\left|#1\right|}

\newcommand{\Unif}{\operatorname{Unif}}
\newcommand{\Sph}{\mathbb{S}^{d-1}}
\newcommand{\Ball}{\mathbb{B}^{d}}

\newcommand{\wtO}{\widetilde{\mathcal{O}}}

\newcommand{\safeincludegraphics}[2][]{%
  \IfFileExists{#2}{%
    \includegraphics[#1]{#2}%
  }{%
    \fbox{\parbox[c][0.25\textheight][c]{0.9\linewidth}{%
      \centering Missing figure: \texttt{\detokenize{#2}}%
    }}%
  }%
}
\title{Stochastic Zeroth-Order Optimization Under Heavy-Tailed Noise}
\author{%
\begin{tabular}{c}
Taha El Bakkali$^{1}$ \quad Elmahdi Chayti$^{2}$ \quad Qiuyi Zhang$^{3}$\\
Imane Rahali$^{1}$ \quad Omar Saadi$^{1}$\\[0.8em]
{\normalfont\small $^{1}$College of Computing}\\
{\normalfont\small Mohammed VI Polytechnic University (UM6P)}\\[0.35em]
{\normalfont\small $^{2}$EPFL \qquad $^{3}$Elorian AI}
\end{tabular}%
}

\date{}

\begin{document}
\maketitle
\begin{abstract}
We study stochastic zeroth-order (ZO) optimization of smooth nonconvex
objectives under heavy-tailed sample-gradient noise. This regime is motivated
by empirical evidence that gradient noise in modern machine learning can violate
the bounded-variance assumptions used in classical ZO theory. While first-order
methods have optimal rates under bounded $p$-th moment noise for
$p\in(1,2]$, analogous high-probability guarantees for nonconvex ZO methods
are much less understood.

The ZO setting is not a direct corollary of first-order theory. First-order
methods observe stochastic gradients, whereas derivative-free methods only query
noisy function values and build finite-difference estimates. Thus, weak-$L_p$
control of $\nabla F(x;\xi)-\nabla f(x)$ must first be transferred to scalar
directional estimates.

We propose the Robust Scalar-Clipped Zeroth-Order method (RSC-ZO), a two-point
method that clips each scalar directional derivative before aggregation. Under
sample-wise smoothness and a weak-$L_p$ tail condition on the sample-gradient
noise, RSC-ZO finds an $\varepsilon$-stationary point with high probability
using
$$
\widetilde{O}\!\left(
d^{\frac{p}{2(p-1)}}\varepsilon^{-\frac{3p-2}{p-1}}
\right)
$$
noisy function evaluations. This matches the optimal first-order
$\varepsilon$-dependence. At $p=2$, the bound becomes
$\widetilde{O}(d\varepsilon^{-4})$, matching the classical stochastic ZO
dimension--accuracy dependence, but with a high-probability guarantee and under
a weaker weak-$L_2$ condition that can allow infinite variance. We also analyze
a momentum variant and quantify its batch-size/stepsize tradeoff.
\end{abstract}

\section{Introduction}
\label{sec:introduction}

We consider the stochastic optimization problem
\[
\min_{x \in \R^d} f(x) := \E_{\xi \sim \PP}\bigl[F(x;\xi)\bigr],
\]
where \((\Xi,\mathcal{F},\PP)\) is a probability space. For each \(\xi\), \(x\mapsto F(x,\xi)\) is differentiable, and for each \(x\), it is measurable with respect to \(\xi\). We assume that \(f\) is finite-valued and bounded below. The finite-sum case is recovered by taking \(\Xi=\{1,\dots,n\}\) and \(\PP\) to be the uniform distribution. 

In this paper, we consider the zeroth-order (ZO) setting where the algorithm can query only noisy function values \(F(x;\xi)\), with no
access to gradients. This oracle model arises naturally in black-box adversarial
attacks~\citep{chen2017zoo,ilyas2018black}, simulation-based
optimization~\citep{fu2015handbook}, and, more recently, memory-efficient
fine-tuning of large language models, where backpropagation is prohibitive and
ZO methods such as MeZO achieve competitive performance using only forward
passes~\citep{malladi2023mezo}.

For smooth nonconvex objectives, stochastic ZO methods
\citep{ghadimi2013stochastic,chayti2025random} establish the benchmark
\(O(d\,\varepsilon^{-4})\) query complexity for finding an
\(\varepsilon\)-stationary point in expectation, when each sample function \(x \mapsto F(x;\xi)\) is smooth and the gradient noise has bounded variance.

The bounded-variance assumption is, however, increasingly at odds with the
regimes in which ZO methods are deployed. Empirical evidence in modern machine
learning indicates that stochastic gradient noise is often \emph{heavy-tailed}.
For example, \citet{zhang2020adaptive} provide evidence of heavy-tailed
stochastic-gradient noise in BERT pretraining, with empirical variance estimates
that fail to stabilize as the sample size increases. Similar behavior has been
reported in image classification: \citet{simsekli2019tail} estimate
stochastic-gradient tail indices below \(2\), consistent with infinite-variance
noise. Heavy-tailed gradients have also been documented in reinforcement
learning~\citep{garg2021proximal}. In such regimes, the second moment may be
infinite or poorly controlled. This poses a challenge not only for first-order
gradient-based stochastic methods, but also for ZO methods, whose updates rely
on gradient estimates reconstructed from noisy function evaluations.

The first-order literature has responded with a substantial line of work showing
that \emph{gradient clipping}, possibly combined with momentum or normalization,
can restore convergence guarantees under bounded \(p\)-th moment noise on the
sample gradients for \(p \in (1,2]\)~\citep{gorbunov2020stochastic,
zhang2020adaptive,cutkosky2021high,sadiev2023high,nguyen2023improved,
liu2023breaking,chayti2026ransomsecondordermomentumrandomized}. For smooth nonconvex first-order stochastic optimization under
bounded \(p\)-th moment noise on the sample gradients, the standard benchmark is
the optimal in-expectation \(\varepsilon\)-dependence
\(\Theta(\varepsilon^{-(3p-2)/(p-1)})\)~\citet{zhang2020adaptive}. Matching high-probability guarantees
were later obtained by \citet{cutkosky2021high}.

\paragraph{The zeroth-order picture is much less developed.}
For zeroth-order optimization, results under heavy-tailed noise are scarce and
almost entirely confined to the convex setting. \citet{kornilov2024median}
obtain complexity bounds for nonsmooth convex ZO under symmetric heavy-tailed
noise using mini-batched median estimates of two-point gradient differences followed by vector clipping;
\citet{kornilov2023gradient} treat convex compact domains; and
\citet{kornilov2023accelerated} give an accelerated method when the variance is
infinite. To the best of our knowledge, no prior result appears to give a high-probability complexity bound for smooth
nonconvex stochastic ZO that matches the classical finite-variance expectation
benchmark. This work closes this gap under a \textit{weak-}$L_p$
tail condition on the sample gradient noise
$\nabla F(x;\xi)-\nabla f(x)$
(Assumption~\ref{ass:hp-shared-prob}), which is strictly weaker than the classical bounded $p$-th moment assumption and allows for infinite variance even when $p=2$.
\paragraph{Why the nonconvex ZO case is not a corollary.}
A natural temptation is to view ZO under heavy tails as an immediate consequence
of first-order results. In first-order SGD, the noise assumption is imposed
directly on the stochastic gradient noise
\(\nabla F(x;\xi)-\nabla f(x)\), and the update uses \(\nabla F(x;\xi)\) itself.
Thus the stochastic error appearing in the update is exactly the quantity
controlled by the assumption. In zeroth-order optimization, however, the
algorithm does not observe \(\nabla F(x;\xi)\). It only observes noisy function
values and forms directional finite-difference estimates. Consequently, the
error in the update is not the original sample-gradient noise vector, but a
scalar directional derivative multiplied by a random direction, together with
finite-difference and smoothing effects. Therefore, weak-\(L_p\) control of
\(\nabla F(x;\xi)-\nabla f(x)\) must first be transferred to the scalar
two-point estimator. This directional transfer is the main ZO-specific step in
our proof and is absent from standard first-order analyses. The difficulty is
amplified by the fact that our assumption is a weak-\(L_p\) tail condition,
which is strictly weaker than the classical bounded \(p\)-th moment assumption.
\paragraph{Contributions.}
We study a robust two-point ZO method, \textbf{RSC-ZO}, that combines
spherical smoothing, shared-randomness directional estimates, and
\emph{scalar clipping applied to each directional derivative before aggregation}
(Algorithm~\ref{alg:rsc-zo-unified}). Our contributions are as follows.

\begin{itemize}[leftmargin=1.2em,itemsep=2pt,topsep=2pt]

\item \textbf{High-probability ZO stationarity under weak-$L_p$ noise.}
Under sample-wise smoothness (Assumption~\ref{ass:sample-smooth}) and a
weak-$L_p$ tail assumption on the sample-gradient noise
(Assumption~\ref{ass:hp-shared-prob}), we prove that RSC-ZO finds an
$\varepsilon$-stationary point with high probability using
$
\widetilde{O}\!\left(
\frac{d^{\frac{p}{2(p-1)}}}
{\varepsilon^{\frac{3p-2}{p-1}}}
\right)
$
noisy function evaluations, up to logarithmic and problem-dependent factors
(Theorem~\ref{thm:hp-shared-clipped} and
Remark~\ref{rem:hp-shared-complexity}). At $p=2$, this gives
$\widetilde{O}(d\,\varepsilon^{-4})$, matching the benchmark
dimension--accuracy dependence of stochastic ZO methods under bounded-variance
noise~\citep{ghadimi2013stochastic,chayti2025random}. Unlike these classical
results, which are typically established in expectation, our guarantee holds
with high probability and under a weaker weak-$L_2$ tail condition. Moreover,
the $\varepsilon$-dependence matches the optimal first-order rate of
\citet{zhang2020adaptive}.

\item \textbf{A scalar-clipping analysis tailored to derivative-free estimation.}
A key step in the proof is to transfer weak-$L_p$ control of the sample-gradient
noise to clipped two-point directional estimates. This step is specific to the
zeroth-order setting: the algorithm never observes $\nabla F(x;\xi)$ directly,
but only noisy function values. The resulting analysis separates smoothing bias,
directional-estimation error, and heavy-tailed stochastic fluctuations before
aggregation.

\item \textbf{Momentum extension with a quantified batch-size/stability tradeoff.}
We extend the analysis to a momentum variant
(Algorithm~\ref{alg:rsc-zo-unified}). Using a localized Lyapunov
argument, we show that momentum reduces the required running batch size by a
factor of $1-\beta$ after warm start, at the price of the stability coupling
$\alpha \lesssim (1-\beta)/L$. The total query complexity is preserved (Corollary~\ref{cor:momentum-complexity}). Choosing $\beta$ close to $1$, we show that the same complexity holds with a batch size as small as $1$.

\end{itemize}

\paragraph{Comparison with related work.}
Our setting sits at the intersection of heavy-tailed first-order optimization and classical stochastic ZO theory.

\emph{Heavy-tailed first-order optimization.}
\citet{zhang2020adaptive} analyze clipped SGD under bounded $p$-th moment
sample-gradient noise and achieve the optimal in-expectation
$\varepsilon$-dependence for smooth nonconvex first-order stochastic
optimization. \citet{cutkosky2021high} obtain high-probability bounds using
clipping, momentum, and normalization. More recently,
\citet{hubler2024parameter,liu2024nonconvex} show that normalization alone,
without clipping, can achieve the optimal rate in related nonconvex first-order
settings. These results do not address the zeroth-order oracle, where the update
is constructed from noisy function values rather than observed stochastic
gradients.

\emph{Heavy-tailed zeroth-order optimization.}
The closest line of work is~\citet{kornilov2024median,kornilov2023gradient,
kornilov2023accelerated}, which studies \emph{convex}, typically nonsmooth,
zeroth-order optimization under heavy-tailed oracle noise. These works often
rely on additional structure, such as symmetry of the oracle noise, and use
median-based gradient-difference estimators followed by vector clipping. In
contrast, we treat \emph{smooth nonconvex} objectives, do not require symmetry,
and use scalar clipping directly on each directional derivative before
aggregation.

\emph{Stochastic ZO in the bounded-variance regime.}
\citet{ghadimi2013stochastic} and \citet{chayti2025random} establish the
benchmark $O(d\,\varepsilon^{-4})$ query complexity for stochastic ZO methods
under sample smoothness and bounded-variance noise, with guarantees
stated in expectation. 

\subsection{Assumptions}
\label{sec:setup}

We now state the assumptions used throughout the paper. Our analysis relies on two ingredients: sample-wise smoothness of the stochastic objective and a direct probability control on the sample gradient noise.

\begin{assumption}[Well-defined problem]\label{ass:Well-defined}
    The function $f$ is bounded from below, i.e.: $f_* := \inf_{x\in\mathbb{R}^d} f(x) > - \infty$ and we define the initial gap $\Delta_0 := f(x_0) - f_*$.
\end{assumption}

\begin{assumption}[Sample smoothness]
\label{ass:sample-smooth}
For $\mathbb P$-almost every $\xi$, the map $x\mapsto F(x;\xi)$ is $L$-smooth, i.e.,
$
\forall x,y\in\R^d,\quad
\norm{\nabla F(x;\xi)-\nabla F(y;\xi)}_2
\le
L\norm{x-y}_2.$
\end{assumption}

\begin{assumption}[Weak-$L_p$ control of the sample gradient noise]
\label{ass:hp-shared-prob}
There exist $p\in(1,2]$ and $\sigma>0$ such that, for every $x\in\R^d$ and every $t>0$,
$
\Pb_\xi\bigl(\norm{\nabla F(x;\xi)-\nabla f(x)}_2>t\bigr)
\le
\left(\frac{\sigma}{t}\right)^p.$
\end{assumption}

\begin{remark}
Assumption~\ref{ass:hp-shared-prob} is weaker than the bounded \(p\)-th moment condition used in previous works. Indeed,
a uniform bound
\[
\E_\xi \norm{\nabla F(x;\xi)-\nabla f(x)}_2^p \le \sigma^p
\]
implies Assumption~\ref{ass:hp-shared-prob} by Markov's inequality, while the converse need not
hold: weak-\(L_p\) control may allow an infinite \(p\)-th moment
(see Proposition~\ref{prop:p2-infinite-variance}).
\end{remark}
\begin{remark}
\label{rem:f-smooth}
Under Assumption~\ref{ass:sample-smooth}, in order to justify $\nabla f(x)=\E_\xi[\nabla F(x;\xi)]$ and conclude that $f$ is $L$-smooth, it is enough to assume that there exists $x_0\in\R^d$ such that $\E_\xi\norm{\nabla F(x_0;\xi)}_2<\infty$. Indeed, for any compact set $K\subset\R^d$,
$
\sup_{x\in K}\norm{\nabla F(x;\xi)}_2
\le
\norm{\nabla F(x_0;\xi)}_2 + L\sup_{x\in K}\norm{x-x_0}_2,$
so dominated convergence yields $\nabla f(x)=\E_\xi[\nabla F(x;\xi)]$. Then
\[
\norm{\nabla f(x)-\nabla f(y)}_2
\le
\E_\xi\norm{\nabla F(x;\xi)-\nabla F(y;\xi)}_2
\le
L\norm{x-y}_2,
\]
which shows that $f$ is $L$-smooth. Throughout the paper, we assume that such a point $x_0$ exists.
\end{remark}

\subsection{Algorithm}
\label{sec:robust-directional-clipping}

We now introduce RSC-ZO and its momentum variant. Both methods use the same
scalar-clipped two-point estimator. Given a point $x\in\mathbb R^d$, batch size
$M$, and clipping threshold $\tau$, sample
$u_\ell\sim\mathrm{Unif}(\mathbb S^{d-1})$ and $\xi_\ell\sim\mathbb P$, using
the same $\xi_\ell$ in the two function evaluations, and define
\[
Y_\ell(x):=
\frac{F(x+\mu u_\ell;\xi_\ell)-F(x-\mu u_\ell;\xi_\ell)}{2\mu}.
\]
We clip each scalar directional estimate by
$\psi_\tau(z):=z\min\{1,\tau/|z|\}$ and form the clipped zeroth-order estimator
$
G_\mu(x;M,\tau):=
\frac{d}{M}\sum_{\ell=1}^M \psi_\tau(Y_\ell(x))u_\ell.$ The base method corresponds to $\beta=0$ and updates directly with
$G_\mu(x_t;M,\tau)$. The momentum variant corresponds to $\beta>0$: in this
case, we first initialize the momentum variable using a separate warm-start
estimate at $x_0$, with batch size $M_0$ and threshold $\tau_0$, and then run
the momentum recursion. The warm start is used only in the momentum case.
Algorithm~\ref{alg:rsc-zo-unified} summarizes both variants.

\begin{algorithm}[H]
\caption{Robust Scalar-Clipped Zeroth-Order Method (RSC-ZO)}
\label{alg:rsc-zo-unified}
\begin{algorithmic}[1]
\Require $x_0\in\mathbb R^d$, stepsize $\alpha>0$, smoothing parameter $\mu>0$,
running batch size $M$, threshold $\tau$, momentum parameter $\beta\in[0,1)$

\If{$\beta=0$}
    \Statex \textbf{Base RSC-ZO}
    \For{$t=0,1,2,\dots$}
        \State $g_t\gets G_\mu(x_t;M,\tau)$
        \State $x_{t+1}\gets x_t-\alpha g_t$
    \EndFor
\Else
    \Statex \textbf{Momentum RSC-ZO}
    \Statex \textbf{Additional inputs:} warm-start batch size $M_0$, warm-start threshold $\tau_0$
    \State $g_0\gets G_\mu(x_0;M_0,\tau_0)$
    \State $m_0\gets g_0$
    \State $x_1\gets x_0-\alpha m_0$
    \For{$t=1,2,\dots$}
        \State $g_t\gets G_\mu(x_t;M,\tau)$
        \State $m_t\gets \beta m_{t-1}+(1-\beta)g_t$
        \State $x_{t+1}\gets x_t-\alpha m_t$
    \EndFor
\EndIf
\end{algorithmic}
\end{algorithm}
\subsection{Spherical smoothing and notation}

For $\mu>0$, define the spherical smoothing of $f$ by
$
f_\mu(x):=\E_{v\sim \Unif(\mathbb B^d)}[f(x+\mu v)],$
where $\mathbb B^d:=\{v\in\mathbb R^d:\norm{v}_2\le 1\}$ is the unit Euclidean ball. It is a classical result \citep{flaxman2004online} that
\[
\nabla f_\mu(x)=\frac{d}{\mu}\,\E_{u\sim \Unif(\mathbb S^{d-1})}[f(x+\mu u)u].
\]
Using the symmetry of the uniform distribution on $\mathbb S^{d-1}$, this can be rewritten in the two-point form
\[
\nabla f_\mu(x)=d\,\E_{u\sim \Unif(\mathbb S^{d-1})}\!\left[\frac{f(x+\mu u)-f(x-\mu u)}{2\mu}\,u\right].
\]
Accordingly, for $x\in\mathbb R^d$ and $u\in\mathbb S^{d-1}$, we define
$
D_\mu f(x,u):=\frac{f(x+\mu u)-f(x-\mu u)}{2\mu},$
so that
\[
\nabla f_\mu(x)=d\,\E_{u\sim \Unif(\mathbb S^{d-1})}[D_\mu f(x,u)u].
\]
\section{High-Probability Analysis of Base RSC-ZO}

\paragraph{Notation.}
Let
$
\bar\Delta_0 := \Delta_0 + \frac{L\mu^2}{2}.$
For brevity, define
$
S_\mu
:=
\frac{\sqrt{L\bar\Delta_0}+\sigma}{\sqrt d}
+
L\mu .$
This scale collects the three quantities that control the scalar directional
finite-difference estimates. The term
\(\sqrt{L\bar\Delta_0}/\sqrt d\) controls the noiseless directional derivative,
the term \(\sigma/\sqrt d\) controls the stochastic gradient noise after
projection onto the random direction \(u\), and the term \(L\mu\) controls the
finite-difference smoothing error. After multiplication by the zeroth-order
factor \(d\), the corresponding estimator scale is \(dS_\mu\).

\paragraph{Goal.}
We first establish a high-probability bound on the estimator error
\(g(x)-\nabla f_\mu(x)\). For each sample, define the two-point directional
derivative estimate
\[
Y_\ell :=
\frac{F(x+\mu u_\ell;\xi_\ell)-F(x-\mu u_\ell;\xi_\ell)}{2\mu}.
\]
The scalar-clipped zeroth-order estimator is then
\[
g(x) :=
\frac{d}{M}\sum_{\ell=1}^M \psi_\tau(Y_\ell)\,u_\ell,
\]
where the pairs \((u_\ell,\xi_\ell)\) are i.i.d., with
\(u_\ell\sim \mathrm{Unif}(\mathbb S^{d-1})\) and \(\xi_\ell\sim\mathcal P\).

We decompose the estimator error as
\[
g(x)-\nabla f_\mu(x)
=
\underbrace{g(x)-\mathbb E[g(x)]}_{A:\ \text{fluctuation}}
+
\underbrace{\mathbb E[g(x)]-\nabla f_\mu(x)}_{B:\ \text{clipping bias}}.
\]
The term \(A\) captures the empirical fluctuation of the clipped average, while
\(B\) captures the deterministic bias introduced by clipping. We control these
terms separately in Lemma~\ref{lem:hp-shared-empirical-fluctuation} and
Lemma~\ref{lem:hp-shared-clipping-bias}. Choosing the clipping level to balance
the two terms, up to logarithmic factors, yields the following localized
deviation bound for a single batched estimator at a fixed, non-random point
\(x\). The extension from this fixed-point estimate to the random trajectory is
handled by localization and a union bound in the proof of
Theorem~\ref{thm:main}.

\begin{theorem}[Localized deviation with tuned clipping]
\label{thm:tuned-dev}
Assume that Assumptions~\ref{ass:Well-defined}, \ref{ass:sample-smooth}, and
\ref{ass:hp-shared-prob} hold. Fix $\delta \in (0,1]$ and $T \ge 3$, set
$
\lambda := \log(T/\delta),$
and assume $M \ge \lambda$. Let $x \in \mathbb{R}^d$ satisfy
$
f(x)-f_* \le 4\bar\Delta_0,$
and choose
$
\tau := 8\,S_\mu\left(\frac{M}{\lambda}\right)^{1/p}.$
Then, with probability at least $1-\delta/T$,
\[
\|g(x)-\nabla f_\mu(x)\|_2
\le
C_p\,d\,S_\mu
\left(\frac{\lambda}{M}\right)^{\frac{p-1}{p}}
\sqrt{1+\log(M/\lambda)},
\]
where
$
C_p := 120+\frac{2^{4-p}}{p-1}.
$
\end{theorem}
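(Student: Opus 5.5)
We split the error as in the decomposition above, $g(x)-\nabla f_\mu(x)=A+B$, and bound each term separately. Both bounds rest on a tail bound for the scalar $Y_\ell$. The first step is therefore the directional transfer. By the mean value theorem applied to $F(\cdot;\xi)$ along the segment $[x-\mu u,x+\mu u]$, together with sample smoothness (Assumption~\ref{ass:sample-smooth}), we have $|Y_\ell-\langle \nabla F(x;\xi_\ell),u_\ell\rangle|\le L\mu$. We then write $\langle\nabla F(x;\xi),u\rangle=\langle\nabla f(x),u\rangle+\langle N,u\rangle$ with $N:=\nabla F(x;\xi)-\nabla f(x)$.

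For the deterministic part we use $\|\nabla f(x)\|_2^2\le 2L(f(x)-f_*)\le 8L\bar\Delta_0$, which holds by Remark~\ref{rem:f-smooth} and the localization $f(x)-f_*\le 4\bar\Delta_0$. For the random part we use that $u$ is independent of $\xi$, so conditionally on $N$ the quantity $\langle N,u\rangle$ is $\|N\|_2$ times a coordinate of a uniform vector on the sphere. Spherical concentration gives $\Pb(|\langle v,u\rangle|>s\|v\|_2/\sqrt d)\le 2e^{-s^2/2}$, and combining this with Assumption~\ref{ass:hp-shared-prob} through a union/peeling argument should yield a weak-$L_p$ tail
\[
\Pb\bigl(|Y_\ell|>c\,S_\mu\, t\bigr)\le t^{-p}\quad\text{for } t\ge 1,
\]
possibly up to a $\log$ factor that we absorb into the constant. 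We also need the vector analogue: $\|Y_\ell u_\ell\|_2=|Y_\ell|$, and $\E\|Y_\ell u_\ell\|$-type moments truncated at level $\tau$ give $\E[\psi_\tau(Y)^2]\lesssim S_\mu^p\tau^{2-p}/(2-p)$. When $p=2$ this produces the logarithmic factor $\log(\tau/S_\mu)\sim\log(M/\lambda)$, and this is the origin of $\sqrt{1+\log(M/\lambda)}$.

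For the clipping bias $B$ we note that $\E[Y_\ell u_\ell]\,d=\nabla f_\mu(x)$, using the two-point identity and the fact that $\E_\xi F=f$, with Fubini justified by the integrability in Remark~\ref{rem:f-smooth}. Hence $\|B\|_2=d\|\E[(\psi_\tau(Y)-Y)u]\|_2\le d\,\E[(|Y|-\tau)_+]$. The weak-$L_p$ tail then gives $\E[(|Y|-\tau)_+]=\int_\tau^\infty \Pb(|Y|>s)\,ds\le \frac{(cS_\mu)^p\tau^{1-p}}{p-1}$. With $\tau=8S_\mu(M/\lambda)^{1/p}$ this is $\lesssim \frac{2^{4-p}}{p-1}S_\mu(\lambda/M)^{(p-1)/p}$, which accounts for the $\frac{2^{4-p}}{p-1}$ term in $C_p$.

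For the fluctuation $A$ we apply a vector Bernstein inequality in $\R^d$ (Pinelis-type for Hilbert spaces) to the i.i.d.\ centered vectors $Z_\ell:=\frac dM(\psi_\tau(Y_\ell)u_\ell-\E[\cdot])$. These satisfy $\|Z_\ell\|_2\le 2d\tau/M$ and $\sum_\ell\E\|Z_\ell\|_2^2\le \frac{d^2}{M}\E[\psi_\tau(Y)^2]$. With probability at least $1-\delta/T$ this gives
\[
\|A\|_2\lesssim d\sqrt{\lambda\,\E[\psi_\tau^2]/M}+d\tau\lambda/M .
\]
Substituting $\tau$, the second term is $8dS_\mu(\lambda/M)^{1-1/p}$. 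For the first term we use $\E\psi_\tau^2\lesssim S_\mu^p\tau^{2-p}(1+\log(M/\lambda))$, where the log factor covers $p$ near $2$ uniformly, which gives $dS_\mu(\lambda/M)^{(p-1)/p}\sqrt{1+\log(M/\lambda)}$. The assumption $M\ge\lambda$ ensures $\tau\ge 8S_\mu$ and keeps all exponents in the favorable direction.

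The main obstacle is the directional transfer under only a weak-$L_p$ condition. Without a $p$-th moment we cannot average over $N$, so the second-moment bound for $\psi_\tau(Y)$ must be obtained by layer-cake integration of the tail up to $\tau$. Keeping this uniform in $p\in(1,2]$, in particular avoiding a $1/(2-p)$ blow-up, requires handling the integral $\int_{S_\mu}^{\tau}s^{1-p}\,ds$ carefully. Bounding this integral by $\tau^{2-p}\log(\tau/S_\mu)$ is exactly what produces the $\sqrt{1+\log(M/\lambda)}$ factor and the constant $120$.
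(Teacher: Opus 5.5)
Your architecture is the paper's. You split the error into clipping bias and fluctuation, and bound $\|B\|_2\le d\,\E[(|Y|-\tau)_+]$ by integrating a weak-$L_p$ tail of $Y$ over $[\tau,\infty)$. You bound $\E[\psi_\tau(Y)^2]$ by layer-cake with $t^{1-p}\le t^{-1}\tau^{2-p}$, which is indeed the source of $\sqrt{1+\log(M/\lambda)}$. You then apply the Pinelis-type vector Bernstein inequality with $R=2d\tau/M$ and substitute $\tau$.

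The gap is the step you yourself call the main obstacle. The weak-$L_p$ tail of $Y$ is asserted (``should yield \ldots\ possibly up to a log factor that we absorb into the constant'') rather than proved. The union-bound route you sketch does not deliver it. Set $W:=\sqrt d\,|\langle N/\|N\|_2,u\rangle|$, so that $|\langle N,u\rangle|=\|N\|_2W/\sqrt d$. A union bound gives $\Pb(\|N\|_2W>\sigma t)\le 2e^{-s^2/2}+(s/t)^p$, and the best choice $s\asymp\sqrt{\log t}$ leaves $t^{-p}(\log t)^{p/2}$. This factor grows with $t$, so it cannot be absorbed into a constant. Integrated over $[\tau,\infty)$, it puts $(\log(M/\lambda))^{p/2}$ into the bias term and adds a further log power to the clipped second moment. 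Since $p/2>1/2$, you no longer get the stated $\sqrt{1+\log(M/\lambda)}$, let alone $C_p$.

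The missing idea is that no concentration is needed. Independence of $u$ and $\xi$ plus a $p$-th moment of the spherical factor suffice. Conditionally on $\xi$, $|\langle N,u\rangle|\overset{d}{=}\frac{\|N\|_2}{\sqrt d}V$ with $V:=\sqrt d\,|U_1|$ independent of $\xi$. Applying Assumption~\ref{ass:hp-shared-prob} inside the expectation over $V$ gives
\[
\Pb\bigl(|\langle N,u\rangle|>r\bigr)=\E_V\Bigl[\Pb_\xi\bigl(\|N\|_2>r\sqrt d/V\bigr)\Bigr]\le\Bigl(\frac{\sigma}{r\sqrt d}\Bigr)^p\E[V^p]\le\Bigl(\frac{\sigma}{r\sqrt d}\Bigr)^p,
\]
since $\E[V^p]\le(\E V^2)^{p/2}=1$ for $p\le2$. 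A dyadic peeling over $W$ is just a lossier way of bounding this same $\E[V^p]$.

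Markov's inequality with the same moment bound handles $\langle\nabla f(x),u\rangle$, using $\|\nabla f(x)\|_2\le\sqrt{8L\bar\Delta_0}$. Adding the $O(L\mu)$ finite-difference remainders gives $\Pb(|Y|>4S_\mu s)\le 2s^{-p}$ for $s\ge1$, i.e.\ $\Pb(|Y|>t)\le 2^{2p+1}S_\mu^pt^{-p}$ for $t\ge4S_\mu$. This exact constant produces the bias coefficient $\frac{2^{4-p}}{p-1}$ and the bound $\E[\psi_\tau(Y)^2]\le64S_\mu^p\tau^{2-p}(1+\log(\tau/S_\mu))$.

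The two Bernstein terms then contribute $8^{2-p/2}(1+\sqrt2)\sqrt{1+\log8}\le98$ and $\frac{64}{3}\le22$. For the first of these you also need $\lambda\ge\log3>1$ (from $T\ge3$, $\delta\le1$) to write $1+\sqrt{2\lambda}\le(1+\sqrt2)\sqrt\lambda$. Because your sketch carries only $\lesssim$ and an unspecified $c$, the explicit $C_p=120+\frac{2^{4-p}}{p-1}$ is currently claimed rather than derived.
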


All intermediate lemmas appear in Appendix~\ref{sec:hp-clipped-shared}. These
include the clipping-bias bound under weak-$L_p$ control, directional
weak-$L_p$ bounds for the noiseless signal and the shared-randomness estimator
$Y$, the clipped second-moment estimate, and the vector Bernstein inequality.
The corresponding detailed appendix statement is
Corollary~\ref{cor:hp-shared-localized-tuned}.

\medskip
We turn the localized deviation bound of Theorem~\ref{thm:tuned-dev} into
a descent estimate for the smoothed objective $f_\mu$. Since the update uses
the clipped estimator $g(x_t)$ in place of $\nabla f_\mu(x_t)$, the key step
is to combine the smoothness inequality for $f_\mu$ with the high-probability
control of $g(x_t)-\nabla f_\mu(x_t)$.

\begin{proposition}[One-step descent inequality]
\label{prop:descent-main}
Assume $f$ is $L$-smooth. For $x\in\mathbb R^d$, let
$
x^+ := x-\alpha g(x)$ and $
e(x):=g(x)-\nabla f_\mu(x).$
If $\alpha\le \frac{1}{4L}$, then
\[
f_\mu(x^+)
\le
f_\mu(x)
-
\frac{\alpha}{2}\|\nabla f_\mu(x)\|_2^2
+
\frac{5\alpha}{4}\|e(x)\|_2^2 .
\]
\end{proposition}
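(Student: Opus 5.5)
First I will record that $f_\mu$ is itself $L$-smooth. It is an average of translates of $f$: $\nabla f_\mu(x)=\E_v[\nabla f(x+\mu v)]$, by dominated convergence, which is justified because $f$ is $L$-smooth. Hence $\norm{\nabla f_\mu(x)-\nabla f_\mu(y)}_2\le \E_v\norm{\nabla f(x+\mu v)-\nabla f(y+\mu v)}_2\le L\norm{x-y}_2$. With this in hand, the standard quadratic upper bound gives
\[
f_\mu(x^+)\le f_\mu(x)-\alpha\ip{\nabla f_\mu(x)}{g(x)}+\frac{L\alpha^2}{2}\norm{g(x)}_2^2 .
\]

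Next I will write $g(x)=\nabla f_\mu(x)+e(x)$ and abbreviate $\nabla:=\nabla f_\mu(x)$ and $e:=e(x)$. The linear term becomes $-\alpha\norm{\nabla}_2^2-\alpha\ip{\nabla}{e}$. I bound the cross term by Young's inequality:
\[
-\alpha\ip{\nabla}{e}\le \frac{\alpha}{4}\norm{\nabla}_2^2+\alpha\norm{e}_2^2 .
\]
For the quadratic term I use $\norm{\nabla+e}_2^2\le 2\norm{\nabla}_2^2+2\norm{e}_2^2$ together with $L\alpha\le 1/4$. This gives
\[
\frac{L\alpha^2}{2}\norm{g}_2^2\le L\alpha^2\bigl(\norm{\nabla}_2^2+\norm{e}_2^2\bigr)\le \frac{\alpha}{4}\norm{\nabla}_2^2+\frac{\alpha}{4}\norm{e}_2^2 .
\]

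Summing the three contributions, the coefficient of $\norm{\nabla}_2^2$ is $-\alpha+\alpha/4+\alpha/4=-\alpha/2$. The coefficient of $\norm{e}_2^2$ is $\alpha+\alpha/4=5\alpha/4$. This is exactly the claimed inequality.

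The argument is deterministic, and the only delicate point is the Young-weight bookkeeping. The weights must be chosen so that the constants land on $1/2$ and $5/4$; the choice above, with weight $1/4$ on $\norm{\nabla}_2^2$ in the cross term, does so. The smoothness transfer from $f$ to $f_\mu$ is routine.
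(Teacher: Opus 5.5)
Your proof is correct and is essentially the paper's argument. It uses the same quadratic upper bound for the $L$-smooth $f_\mu$, the same Young split $|\langle \nabla f_\mu(x), e(x)\rangle| \le \frac14\|\nabla f_\mu(x)\|_2^2 + \|e(x)\|_2^2$, and the same bound $\|g(x)\|_2^2 \le 2\|\nabla f_\mu(x)\|_2^2 + 2\|e(x)\|_2^2$; the paper only differs in collecting the coefficients as $-\alpha(\frac34 - L\alpha)$ and $\alpha(1+L\alpha)$ before invoking $L\alpha\le\frac14$, and it asserts the $L$-smoothness of $f_\mu$ without the justification you supply.
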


For $M\in\mathbb N$, $\mu>0$, $\delta\in(0,1)$, and $T\ge 3$, define
\[
\eta_0(M,\mu,\delta,T)
:=
C_p\,d\,S_\mu
\left(\frac{\log(T/\delta)}{M}\right)^{\frac{p-1}{p}}
\sqrt{
1+\log\!\left(\frac{M}{\log(T/\delta)}\right)
},
\]
and
$
\eta(M,\mu,\delta,T)
:=
\eta_0(M,\mu,\delta,T)+L\mu .$
Iterating Proposition~\ref{prop:descent-main} along the trajectory of
Algorithm~\ref{alg:rsc-zo-unified}, applying the localized deviation bound of
Theorem~\ref{thm:tuned-dev} on the event that the iterates remain in the region
\(f(x_t)-f_* \le 4\bar\Delta_0\), and taking a union bound over $T$ steps yields
our main result. The localization argument is given in
Appendix~\ref{sec:hp-clipped-shared}.

\begin{theorem}[High-probability convergence of base RSC-ZO]
\label{thm:main}
Assume that Assumptions~\ref{ass:Well-defined}, \ref{ass:sample-smooth}, and
\ref{ass:hp-shared-prob} hold. Let $\mu>0$,
$
\Delta_0 := f(x_0)-f_*,$ and $
\bar\Delta_0 := \Delta_0+\frac{L\mu^2}{2}.$
Let $\epsilon>0$, $\delta\in(0,1]$, $M\in\mathbb N$, and define
\[
T
:=
\max\left\{
3,\,
\left\lceil \frac{32L\bar\Delta_0}{\epsilon^2}\right\rceil
\right\}.
\]
Run Algorithm~\ref{alg:rsc-zo-unified} with $\beta=0$ (no momentum), batch size $M$, stepsize
$
\alpha := \frac{1}{4L},$
and clipping threshold
$
\tau
:=
8\,S_\mu
\left(\frac{M}{\log(T/\delta)}\right)^{1/p}.$
Assume
\[
M\ge \log(T/\delta),
\qquad
\eta(M,\mu,\delta,T)\le \frac{\epsilon}{4},
\qquad
\epsilon^2\le 32L\bar\Delta_0 .
\]
Then, with probability at least $1-\delta$,
\[
\frac1T\sum_{t=0}^{T-1}
\|\nabla f(x_t)\|_2^2
\le
\epsilon^2 .
\]
\end{theorem}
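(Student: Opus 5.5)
We combine Proposition~\ref{prop:descent-main} with Theorem~\ref{thm:tuned-dev} through a localization (stopping-time) argument, and then transfer the resulting bound from $\nabla f_\mu$ to $\nabla f$.

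\textbf{Step 1: the good events and the localized region.} For each $t<T$, let $\mathcal F_t$ be the $\sigma$-algebra generated by the samples used up to iteration $t-1$, so that $x_t$ is $\mathcal F_t$-measurable. Define the event
\[
E_t:=\bigl\{f(x_t)-f_*\le 4\bar\Delta_0\bigr\}\cap\bigl\{\|e(x_t)\|_2>\eta_0\bigr\}, \qquad e(x):=g(x)-\nabla f_\mu(x).
\]
Condition on $\mathcal F_t$. The batch at step $t$ is fresh, so $x_t$ is a fixed point with respect to it. If $f(x_t)-f_*\le 4\bar\Delta_0$, Theorem~\ref{thm:tuned-dev} applies with $\lambda=\log(T/\delta)\le M$ and the prescribed $\tau$, giving $\PP(E_t\mid\mathcal F_t)\le\delta/T$. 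A union bound over $t<T$ then shows that the event
\[
\mathcal G:=\bigcap_{t<T}E_t^c
\]
has probability at least $1-\delta$. From now on we work on $\mathcal G$.

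\textbf{Step 2: induction keeping the iterates localized.} We show by induction that, on $\mathcal G$, for every $s\le T$,
\[
f_\mu(x_s)-f_*\le \bar\Delta_0+\tfrac{5\alpha}{4}s\,\eta_0^2
\qquad\text{and}\qquad
f(x_s)-f_*\le 4\bar\Delta_0 .
\]

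\emph{Relating $f$ and $f_\mu$.} We use the standard sandwich $|f_\mu-f|\le L\mu^2/2$. With $\bar\Delta_0=\Delta_0+L\mu^2/2$, this gives $f_\mu(x_0)-f_*\le\bar\Delta_0$ and $f(x)\le f_\mu(x)+L\mu^2/2$.

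\emph{Inductive step.} Since $\alpha=1/(4L)$, Proposition~\ref{prop:descent-main} applies. On $E_t^c$, with localization holding at step $t$, it gives
\[
f_\mu(x_{t+1})\le f_\mu(x_t)-\tfrac{\alpha}{2}\|\nabla f_\mu(x_t)\|_2^2+\tfrac{5\alpha}{4}\eta_0^2 .
\]
Summing these inequalities yields the first bound at $s$. For the second bound, we use $\eta_0\le\epsilon/4$ and $\alpha=1/(4L)$, which give
\[
\tfrac{5\alpha}{4}T\eta_0^2\le \frac{5T\epsilon^2}{256L}.
\]
We then choose $T$ as the ceiling in its definition. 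The hypothesis $\epsilon^2\le 32L\bar\Delta_0$ ensures that this ceiling is at least $1$ and at most twice its argument. Hence $T\epsilon^2\le 64L\bar\Delta_0$, except in the case where the max with $3$ binds, which needs a small separate check using $\epsilon^2\le32L\bar\Delta_0$. Therefore the accumulated error is at most $\tfrac{5\cdot 64}{256}\bar\Delta_0<2\bar\Delta_0$. Adding the $L\mu^2/2\le\bar\Delta_0$ shift from the sandwich gives $f(x_s)-f_*\le 4\bar\Delta_0$, which closes the induction.

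\textbf{Step 3: converting to $\nabla f$.} Telescoping Step 2 and using $f_\mu\ge f_*-L\mu^2/2$ gives
\[
\tfrac{\alpha}{2}\sum_{t<T}\|\nabla f_\mu(x_t)\|_2^2\le 2\bar\Delta_0+T\tfrac{5\alpha}{4}\eta_0^2 .
\]
Dividing by $T\alpha/2$, with $\alpha=1/(4L)$ and $T\ge 32L\bar\Delta_0/\epsilon^2$, we get
\[
\frac1T\sum_{t<T}\|\nabla f_\mu(x_t)\|_2^2\le \frac{16L\bar\Delta_0}{T}+\tfrac52\eta_0^2\le \frac{\epsilon^2}{2}+\tfrac52\eta_0^2 .
\]
Next, $\|\nabla f-\nabla f_\mu\|_2\le L\mu$, so
\[
\|\nabla f(x_t)\|_2^2\le 2\|\nabla f_\mu(x_t)\|_2^2+2L^2\mu^2 .
\]
The factor $2$ here looks lossy. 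A sharper route applies $(a+b)^2$ bookkeeping with $\eta=\eta_0+L\mu\le\epsilon/4$ directly: bound $\|\nabla f_\mu\|^2$ by $\epsilon^2/2+\tfrac52\eta_0^2$, then use $\|\nabla f\|\le\|\nabla f_\mu\|+L\mu$ with Minkowski in $\ell_2$ over $t$. This gives
\[
\Bigl(\tfrac1T\textstyle\sum_t\|\nabla f(x_t)\|_2^2\Bigr)^{1/2}\le \sqrt{\epsilon^2/2+\tfrac52\eta_0^2}+L\mu .
\]
Using $\eta_0,L\mu\le\epsilon/4$ with $\eta_0+L\mu\le\epsilon/4$, the right-hand side is at most $\epsilon\bigl(\sqrt{1/2+5/32}+\text{small}\bigr)\le\epsilon$. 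This last inequality should be checked with the exact constants; maximizing over the split between $\eta_0$ and $L\mu$ suffices.

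\textbf{Main obstacle.} The delicate step is the localization. Theorem~\ref{thm:tuned-dev} is stated for fixed points inside the sublevel region, while the iterates are random, so the conditioning has to be done carefully through the filtration $\mathcal F_t$. In addition, the accumulated error $T\eta_0^2$ must stay within the $4\bar\Delta_0$ budget, and this is exactly where the constants $32$, $1/4$ and $\epsilon/4$ are used.
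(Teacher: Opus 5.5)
Your proposal is correct and follows essentially the same route as the paper: a filtration-conditioned application of the localized deviation bound, a union bound over $T$ steps, and an induction keeping $f(x_t)-f_*\le 4\bar\Delta_0$. Your deferred ``separate check'' is resolved by the paper's uniform bound $T\epsilon^2\le 96L\bar\Delta_0$, which gives accumulated error $\tfrac{15}{8}\bar\Delta_0<2\bar\Delta_0$; the argument then concludes by telescoping. The only real difference is in the final bookkeeping: the paper uses $f_\mu\ge f_*$ (since $f_\mu$ averages $f$), which yields $\epsilon^2/4$ and lets the crude split $\|\nabla f\|_2^2\le 2\|\nabla f_\mu\|_2^2+2L^2\mu^2$ close at $\tfrac{13}{16}\epsilon^2$, whereas your weaker bound $f_\mu\ge f_*-L\mu^2/2$ forces the Minkowski step, whose worst case is actually $\eta_0=0$, $L\mu=\epsilon/4$, giving $(\sqrt{1/2}+\tfrac14)\epsilon<\epsilon$ rather than the $\sqrt{1/2+5/32}$ case you wrote.
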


\begin{remark}[Query complexity]
\label{rem:complexity}
The total number of stochastic function queries is $2MT$. Choose
$\mu\asymp \epsilon/(Ld)$. Then $dL\mu=O(\epsilon)$, $L\mu=O(\epsilon/d)$, and
$\bar\Delta_0=\Delta_0+O(\epsilon^2/(Ld^2))$. Since
$
T=\max\left\{3,\left\lceil \frac{32L\bar\Delta_0}{\epsilon^2}\right\rceil\right\},$
we have $T=O(1+L\Delta_0/\epsilon^2)$. When the second term dominates the
constant \(3\), this is $T=O(L\Delta_0/\epsilon^2)$.

Up to logarithmic factors, the condition
$\eta(M,\mu,\delta,T)\le \epsilon/4$ reduces to
$
dS_\mu\left(\frac{\log(T/\delta)}{M}\right)^{\frac{p-1}{p}}
\lesssim \epsilon .$
Moreover, under the above choice of $\mu$,
\[
dS_\mu
=
\sqrt d\,(\sqrt{L\bar\Delta_0}+\sigma)+dL\mu
=
\sqrt d\,(\sqrt{L\Delta_0}+\sigma)+O(\epsilon).
\]
Therefore it is enough to choose
$
M
\gtrsim
\log(T/\delta)
\left(
\frac{\sqrt d\,(\sqrt{L\Delta_0}+\sigma)}{\epsilon}
\right)^{\frac{p}{p-1}},$
up to logarithmic factors. Combining this with $T=O(L\Delta_0/\epsilon^2)$ gives
$
MT
=
\widetilde O\!\left(
L\Delta_0
\bigl(\sqrt{L\Delta_0}+\sigma\bigr)^{\frac{p}{p-1}}
\frac{
d^{\frac{p}{2(p-1)}}
}{
\epsilon^{\frac{3p-2}{p-1}}
}
\right).$
Thus the total number of stochastic function queries, $2MT$, has the same order
up to the leading factor $2$. In particular, at $p=2$,
$
MT
=
\widetilde O\!\left(
L\Delta_0
(\sqrt{L\Delta_0}+\sigma)^2
\,d\,\epsilon^{-4}
\right),$
so the method has the classical $d\,\epsilon^{-4}$ dimension--accuracy
dependence up to logarithmic and problem-dependent factors. This matches the
classical bounded-variance ZO benchmark
\citep{ghadimi2013stochastic,chayti2025random}, but under the strictly weaker
weak-$L_2$ tail condition and as a high-probability, rather than
in-expectation, guarantee. The $\epsilon$-dependence further matches the
optimal first-order rate of \citet{zhang2020adaptive}.
\end{remark}

\begin{remark}[Trivial regime]
\label{rem:trivial}
The condition $\epsilon^2\le 32L\bar\Delta_0$ isolates the regime where the
iteration bound above is needed. If $\epsilon^2>32L\bar\Delta_0$, then
Lemma~\ref{lem:hp-shared-grad-subopt} gives
\[
\|\nabla f(x_0)\|_2^2
\le
2L(f(x_0)-f_*)
=
2L\Delta_0
\le
2L\bar\Delta_0
<
\epsilon^2 .
\]
Hence the desired stationarity conclusion already holds at the initial point.
\end{remark}
\section{High-Probability Analysis of Momentum RSC-ZO}
\label{sec:momentum-main}

We now study the momentum variant of RSC-ZO, corresponding to \(\beta>0\) in
Algorithm~\ref{alg:rsc-zo-unified}. In this variant, the algorithm averages
clipped directional estimates over time, which reduces the running batch size
required per iteration (see~\citep[Section 2.2]{chayti2025improvingstochasticcubicnewton} for a discussion on how momentum reuses past iterates to mimic a large batch). This reduction comes with a proportional stepsize
restriction of order \((1-\beta)/L\).

The method first computes a warm-start estimator
$
  g_0 = G_\mu(x_0;M_0,\tau_0),\,
  m_0=g_0$ and $
  x_1=x_0-\alpha m_0.$
Then, for \(t\ge1\), it forms an independent clipped two-point estimator
$
  g_t
  =
  G_\mu(x_t;M,\tau)
  =
  \frac dM
  \sum_{\ell=1}^M
  \psi_\tau(Y_{t,\ell})u_{t,\ell},$
and updates
\[
  m_t=\beta m_{t-1}+(1-\beta)g_t,
  \qquad
  x_{t+1}=x_t-\alpha m_t,
\]
where \(\beta\in(0,1)\) is the momentum parameter. The analysis below focuses
on the high-momentum regime \(\beta\in[1/2,1)\), where the batch-size reduction
is most relevant.

\begin{theorem}[High-probability convergence of momentum RSC-ZO]
\label{thm:momentum-main}
Suppose Assumptions~\ref{ass:Well-defined}, \ref{ass:sample-smooth}, and
\ref{ass:hp-shared-prob} hold. Let \(\epsilon\in(0,1)\),
\(\delta\in(0,1]\), and \(\beta\in[1/2,1)\). Choose
$
  \mu \le \frac{\epsilon}{4Ld}.$
Let
$
  \Delta_0:=f(x_0)-f_*$ and $
  \bar\Delta_0:=\Delta_0+\frac{L\mu^2}{2}$
and assume \(\epsilon^2\le 32L\bar\Delta_0\). Define
$
  S_\mu
  :=
  \frac{\sqrt{L\bar\Delta_0}+\sigma}{\sqrt d}+L\mu$ and $
  \mathcal R_\mu:=dS_\mu.$
Choose
\[
  \alpha=\frac{1-\beta}{16\sqrt 3\,L},
  \qquad
  T
  =
  \left\lceil
    512\sqrt 3\,
    \frac{L\bar\Delta_0}{(1-\beta)\epsilon^2}
  \right\rceil+2,
\]
and set
\[
  \lambda:=\log\frac{2T}{\delta},
  \qquad
  \lambda_0:=\log\frac{2}{\delta}.
\]
Let
\[
  \tau
  =
  8S_\mu
  \left(
    \frac{M}{(1-\beta)\lambda}
  \right)^{1/p},
  \qquad
  \tau_0
  =
  8S_\mu
  \left(
    \frac{M_0}{\lambda_0}
  \right)^{1/p}.
\]
Let \(M_0,M\in\mathbb N\). Assume \(M_0\ge \lambda_0\),
\(M\ge (1-\beta)\lambda\), and
\[
  M_0
  =
  \widetilde{\Theta}
  \left(
    \left(
      \frac{\mathcal R_\mu}{\epsilon}
    \right)^{\frac{p}{p-1}}
  \right),
  \qquad
  M
  =
  \widetilde{\Theta}
  \left(
    (1-\beta)
    \left(
      \frac{\mathcal R_\mu}{\epsilon}
    \right)^{\frac{p}{p-1}}
  \right).
\]
Then, with probability at least \(1-\delta\),  we have   
$
  \frac1{T-2}
  \sum_{t=1}^{T-2}
  \|\nabla f(x_t)\|_2^2
  \le
  \epsilon^2.$
\end{theorem}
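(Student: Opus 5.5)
The plan is to run a localized Lyapunov argument for the momentum recursion on the smoothed objective $f_\mu$, reusing Theorem~\ref{thm:tuned-dev} as the per-step noise control. I will work on the event $\mathcal E_t$ that all iterates $x_1,\dots,x_t$ satisfy $f(x_s)-f_*\le 4\bar\Delta_0$, and prove by induction that $\mathcal E_{T-1}$ holds with probability at least $1-\delta$.

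\textbf{Noise control.} Write $\epsilon_t:=m_t-\nabla f_\mu(x_t)$. Unrolling the recursion gives
\[
\epsilon_t=\beta^t(g_0-\nabla f_\mu(x_0))+\sum_{s=1}^t\beta^{t-s}(1-\beta)\bigl(g_s-\nabla f_\mu(x_s)\bigr)+\sum_{s=1}^t\beta^{t-s+1}\bigl(\nabla f_\mu(x_{s-1})-\nabla f_\mu(x_s)\bigr).
\]
The first term is handled by Theorem~\ref{thm:tuned-dev} at the deterministic point $x_0$ with $(M_0,\tau_0,\lambda_0)$, which gives an error of order $\epsilon$ with probability $1-\delta/2$. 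The drift term is bounded by $L\alpha\sum_s\beta^{t-s+1}\|m_{s-1}\|\le \frac{L\alpha}{1-\beta}\max_s\|m_s\|$. With $\alpha=(1-\beta)/(16\sqrt3L)$ this becomes a small multiple of $\max_s\|m_s\|$, and it is absorbed in the Lyapunov step.

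The middle term is the main obstacle. Applying Theorem~\ref{thm:tuned-dev} to each $g_s$ separately, with $\tau$ tuned to $M/((1-\beta)\lambda)$, is not enough, because $M$ is only a $(1-\beta)$ fraction of the needed batch. I will instead treat the weighted sum as a single vector martingale: the clipped summands $(1-\beta)\beta^{t-s}\frac dM\psi_\tau(Y_{s,\ell})u_{s,\ell}$ are bounded by $(1-\beta)d\tau/M$, and their effective count is roughly $M/(1-\beta)$. I will rerun the appendix argument, namely the clipping bias under weak-$L_p$, the clipped second-moment bound, and vector Bernstein (Freedman form, conditioning on $\mathcal F_{s-1}$ and using localization through the indicator of $\mathcal E_{s-1}$). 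This yields, with probability $1-\delta/(2T)$ for each $t$,
\[
\Bigl\|\sum_s\beta^{t-s}(1-\beta)(g_s-\mathbb E_{s-1}g_s)\Bigr\|+\text{bias}\lesssim C_p\,\mathcal R_\mu\Bigl(\tfrac{(1-\beta)\lambda}{M}\Bigr)^{\frac{p-1}{p}}\sqrt{\log}.
\]
This is $\le\epsilon/8$ under the stated choice of $M$. A union bound over $t\le T$ and the warm start gives total failure probability $\le\delta$.

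\textbf{Descent.} By $L$-smoothness of $f_\mu$, and since $x_{t+1}=x_t-\alpha m_t$,
\[
f_\mu(x_{t+1})\le f_\mu(x_t)-\tfrac\alpha2\|\nabla f_\mu(x_t)\|^2-\tfrac\alpha4\|m_t\|^2+\tfrac\alpha2\|\epsilon_t\|^2
\]
for $\alpha\le1/(2L)$. I will insert the bound $\|\epsilon_t\|^2\lesssim\epsilon^2/16+\bigl(\frac{L\alpha}{1-\beta}\bigr)^2\max_{s<t}\|m_s\|^2$. The constant $16\sqrt3$ is chosen so that the $\|m_s\|^2$ terms, summed over $t$ with the geometric weights handled as a convolution, are dominated by the $-\frac\alpha4\sum\|m_t\|^2$ terms. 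Telescoping from $t=1$ to $T-2$ then yields
\[
\sum_t\|\nabla f_\mu(x_t)\|^2\le \frac{2\bar\Delta_0'}{\alpha}+(T-2)\frac{\epsilon^2}{8}.
\]
Here $f_\mu(x_1)-\inf f_\mu\le 2\bar\Delta_0$ follows from one step of the warm start. Partial sums of the same telescoping show $f_\mu(x_t)-f_*\le 3\bar\Delta_0$, and hence $f(x_t)-f_*\le4\bar\Delta_0$, which closes the induction for $\mathcal E_t$.

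\textbf{Conclusion.} From $\|\nabla f-\nabla f_\mu\|\le L\mu\le\epsilon/(4d)$ and $(a+b)^2\le2a^2+2b^2$, together with the choice of $T\gtrsim 512\sqrt3\,L\bar\Delta_0/((1-\beta)\epsilon^2)$, whose reciprocal matches $\frac{2\bar\Delta_0}{\alpha(T-2)}\le\epsilon^2/4$, the average over $t=1,\dots,T-2$ is at most $\epsilon^2$. The delicate points are the martingale concentration of the momentum-weighted clipped sum, which requires conditional versions of the appendix lemmas valid at random but localized points, and the bookkeeping of the drift absorption.
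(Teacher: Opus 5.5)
Your proposal is correct and shares most of its skeleton with the paper:
\begin{itemize}
\item a stopped, geometrically weighted sum of centered clipped errors, controlled by a vector Freedman inequality with effective batch $M/(1-\beta)$;
\item a uniform clipping-bias bound;
\item a warm-start deviation bound at $x_0$ with failure probability $\delta/2$;
\item localization closed by induction (a first-exit argument in the paper).
\end{itemize}
The genuine difference is how you handle the drift $\beta(\nabla f_\mu(x_{s-1})-\nabla f_\mu(x_s))$.

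The paper never unrolls the drift. It removes the fluctuation by setting $u_t=e_t-\widehat z_t$ and shows the contracting recursion $\|u_t\|_2^2\le\frac{1+\beta}{2}\|u_{t-1}\|_2^2+2(1-\beta)B^2+\frac{3\beta^2L^2\alpha^2}{1-\beta}\|m_{t-1}\|_2^2$. It then runs the potential $\Phi_t=f_\mu(x_t)+\frac{2\alpha}{1-\beta}\|u_t\|_2^2$, which decreases up to $\alpha(U^2+4B^2)$ per step. The constant $16\sqrt3$ is exactly what makes $6\beta^2L^2\alpha^2/(1-\beta)^2\le 1/8$. The warm-start error and the first drift step are bundled into the single condition $\Phi_1-f_*\le 2\bar\Delta_0$.

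You instead unroll the tracking error $m_t-\nabla f_\mu(x_t)$ completely. You absorb its drift part $D_t:=\sum_{s=1}^t\beta^{t-s+1}(\nabla f_\mu(x_{s-1})-\nabla f_\mu(x_s))$ by Cauchy--Schwarz and a swap of sums, giving $\sum_{t\le K}\|D_t\|_2^2\le\frac{\beta^2L^2\alpha^2}{(1-\beta)^2}\sum_{s=0}^{K-1}\|m_s\|_2^2$, which is then dominated by $-\frac\alpha4\sum_t\|m_t\|_2^2$.
\begin{itemize}
\item Your route is more explicit: the warm-start error visibly decays like $\beta^t$ and costs $O(\alpha G^2/(1-\beta))$ in total, with $G$ the warm-start error bound.
\item It needs only $L\alpha\le c(1-\beta)/\beta$ with a moderate absolute $c$ (about $1/\sqrt6$ for a three-way split), so the prescribed stepsize has room to spare.
\item The Lyapunov route avoids double sums and gives directly the per-step monotonicity used for localization.
\end{itemize}

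A few points to tighten when writing it out:
\begin{itemize}
\item Keep the geometric weights, as you indicate. The intermediate bound with $\max_{s<t}\|m_s\|_2^2$ is unusable after summing over $t$, since $\sum_t\max_{s<t}\|m_s\|_2^2$ is not controlled by $\sum_t\|m_t\|_2^2$.
\item The term $\|m_0\|_2^2$ appears in the drift but not in $-\frac\alpha4\sum_{t\ge1}\|m_t\|_2^2$. Either use $\|m_0\|_2\le\sqrt{2L\Delta_0}+L\mu+G$ (the paper's analogue is the $L\alpha H_\mu$ term in its initialization condition), or keep the $-\frac\alpha4\|m_0\|_2^2$ term from the warm-start descent step.
\item The localization indicator attached to the step-$s$ summands must be that of the event containing $x_s$, which is $\mathcal F_{s-1}$-measurable, not $\mathcal E_{s-1}$.
\end{itemize}
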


\begin{remark}[Query complexity and momentum tradeoff]
\label{rem:momentum-complexity}
The notation \(\widetilde{\Theta}(\cdot)\) hides only logarithmic factors and
constants depending on \(p\), but not on
\(d,L,\Delta_0,\sigma,\epsilon,\delta\), or \(\beta\). The total number of
stochastic function evaluations is
$
  Q_{\rm MOM}:=2M_0+2M(T-1).$
Using the choices in Theorem~\ref{thm:momentum-main}, we obtain
$
  Q_{\rm MOM}
  =
  \widetilde O
  \left(
    \left(
      \frac{\mathcal R_\mu}{\epsilon}
    \right)^{\frac{p}{p-1}}
  \right)
  +
  \widetilde O
  \left(
    \frac{L\bar\Delta_0}{\epsilon^2}
    \left(
      \frac{\mathcal R_\mu}{\epsilon}
    \right)^{\frac{p}{p-1}}
  \right).$
Moreover, under the above choice of \(\mu\),
$
  \bar\Delta_0
  =
  \Delta_0+O\!\left(\frac{\epsilon^2}{Ld^2}\right)$ and $
  \mathcal R_\mu
  =
  \sqrt d(\sqrt{L\Delta_0}+\sigma)+O(\epsilon).$
Hence
$
  Q_{\rm MOM}
  =
  \widetilde O
  \left(
    \left(
      \frac{\sqrt d(\sqrt{L\Delta_0}+\sigma)}{\epsilon}
    \right)^{\frac{p}{p-1}}
  \right)
  +
  \widetilde O
  \left(
    \frac{L\Delta_0}{\epsilon^2}
    \left(
      \frac{\sqrt d(\sqrt{L\Delta_0}+\sigma)}{\epsilon}
    \right)^{\frac{p}{p-1}}
  \right).$
Thus, compared with base RSC-ZO, momentum reduces the running batch size by a
factor of order \(1-\beta\), while the admissible stepsize scales as
$
  \alpha\asymp \frac{1-\beta}{L}.$
The per-iteration query cost decreases, but the number of iterations increases
by the reciprocal factor. The total query complexity is therefore preserved up
to logarithmic factors and the additive warm-start cost.
\end{remark}

\textbf{Momentum reduces the batch size requirement of Theorem~\ref{thm:main}}. In particular, it suffices to take $\beta= 1 - \widetilde{\Theta}\left(\big(\frac{\epsilon}{\mathcal R_\mu}
    \big)^{\frac{p}{p-1}}\right)$, and $T = \widetilde O
  \left(
    \frac{L\bar\Delta_0}{\epsilon^2}
    \left(
      \frac{\mathcal R_\mu}{\epsilon}
    \right)^{\frac{p}{p-1}}
  \right)$ to guarantee    
$
  \frac1{T-2}
  \sum_{t=1}^{T-2}
  \|\nabla f(x_t)\|_2^2
  \le
  \epsilon^2.$ with probability at least \(1-\delta\), and most importantly, this is obtained for a batch size $M$ constant that can be even $M=1$ (note that $M_0$ will stay unchanged).

\section{Zeroth-Order Fine-Tuning Experiments}
\label{sec:mezo-experiments}

\subsection{Setup}

We evaluate RSC-ZO on classification fine-tuning of RoBERTa-large
(350M parameters)~\citep{liu2019roberta}. We consider two natural language
inference tasks, SNLI~\citep{bowman2015snli} and
MNLI~\citep{williams2018mnli}, and one topic classification task,
TREC~\citep{voorhees2000trec}. We follow the prompt-based fine-tuning setup of
\citet{malladi2023mezo} for masked language models~\citep{gao2021making}, using
\(k=512\) training examples per task.

\subsection{Results}

We compare RSC-ZO against MeZO as the primary baseline. We also evaluate
multi-direction variants of RSC-ZO, where the zeroth-order estimate is averaged
over \(M\in\{1,2,4\}\) directions. Since increasing \(M\) increases the number
of forward passes per update, we keep the forward-evaluation budget fixed by
scaling the data batch size inversely with \(M\).

Table~\ref{tab:roberta} reports test accuracy over five seeds. RSC-ZO improves
substantially over MeZO on all three tasks. The gains are already strong for
\(M=1\), showing that scalar clipping is useful even in the standard one-direction
MeZO regime. Increasing \(M\) improves MNLI and TREC further, while SNLI performs
best at \(M=1\), suggesting that the best number of directions can be
task-dependent under a fixed query budget.

\begin{table}[H]
\centering
\begin{tabular}{lccc}
\toprule
\textbf{Method} & \textbf{SNLI} & \textbf{MNLI} & \textbf{TREC} \\
\midrule
Zero-shot & 50.4 & 49.2 & 32.0 \\
\midrule
MeZO & 74.2 (1.5) & 67.9 (2.8) & 83.2 (4.3) \\
\midrule
RSC-ZO (\(M=1\)) & \textbf{83.0 (1.0)} & 75.5 (0.9) & 92.4 (1.2) \\
RSC-ZO (\(M=2\)) & 82.0 (0.2) & 76.2 (1.5) & 92.9 (1.2) \\
RSC-ZO (\(M=4\)) & 79.7 (0.7) & \textbf{76.9 (1.3)} & \textbf{93.5 (0.6)} \\
\bottomrule
\end{tabular}
\vspace{6pt}
\caption{Accuracy of RoBERTa-large fine-tuned with \(k=512\) examples. Results
are mean accuracy with standard deviation over five seeds.}
\label{tab:roberta}
\end{table}

All methods are trained for 100K steps with learning rate \(10^{-6}\), smoothing
parameter \(\mu=10^{-3}\) (both optimized for MeZo), and no weight decay. For RSC-ZO, the clipping
threshold \(\tau\) is selected per task by grid search over
\(\{2,4,8,16,32,64\}\). Additional implementation details are given in
Appendix~\ref{app:mezo-details}.

\section{Controlled experiment: clipping before versus after aggregation}
\label{subsec:controlled-exp-main}

We include a controlled experiment to isolate the estimator-level effect of
scalar clipping. The goal is not to benchmark on a difficult objective, but to
test the mechanism suggested by our theory: in zeroth-order estimation under
heavy-tailed noise, it is beneficial to clip each scalar directional derivative
before aggregation, rather than only clipping the final batched vector.

We consider
$
    f(x)=\frac12\|x\|_2^2$ and $
    F(x;\zeta)=\frac12\|x\|_2^2+\langle \zeta,x\rangle$, 
where the noise is sparse and heavy-tailed,
$
    \zeta=sAe_J.$
Here \(s\) is uniform on \(\{-1,+1\}\), \(J\) is uniform on
\(\{1,\ldots,d\}\), and \(A\) is Pareto with tail exponent \(p\). Thus
\(\mathbb E[\zeta]=0\), the population objective is \(f\), and for
\(p\in(1,2)\) the stochastic gradient noise has finite first moment but infinite
variance. For a two-point shared-randomness estimator,
\[
    Y_\ell
    =
    \frac{F(x+\mu u_\ell;\zeta_\ell)-F(x-\mu u_\ell;\zeta_\ell)}{2\mu}
    =
    \langle x,u_\ell\rangle+\langle \zeta_\ell,u_\ell\rangle .
\]
We compare three natural estimators:
\[
\begin{aligned}
g_{\rm raw}
&=
\frac dM\sum_{\ell=1}^M Y_\ell u_\ell,
&&\text{unclipped ZO estimator},\\
g_{\rm vec}
&=
\operatorname{clip}_{r_{\rm vec}}(g_{\rm raw}),
&&\text{vector clipping after aggregation},\\
g_{\rm sc}
&=
\frac dM\sum_{\ell=1}^M \psi_\tau(Y_\ell)u_\ell,
&&\text{scalar clipping before aggregation}.
\end{aligned}
\]
The final-vector baseline is the direct analogue of standard gradient clipping
in SGD: one first forms a stochastic gradient-like vector and then clips its
norm. In contrast, scalar clipping acts earlier, at the level of each
one-dimensional directional measurement.

\begin{figure}[]
    \centering
    \begin{minipage}{0.48\linewidth}
        \centering
        \safeincludegraphics[width=\linewidth]{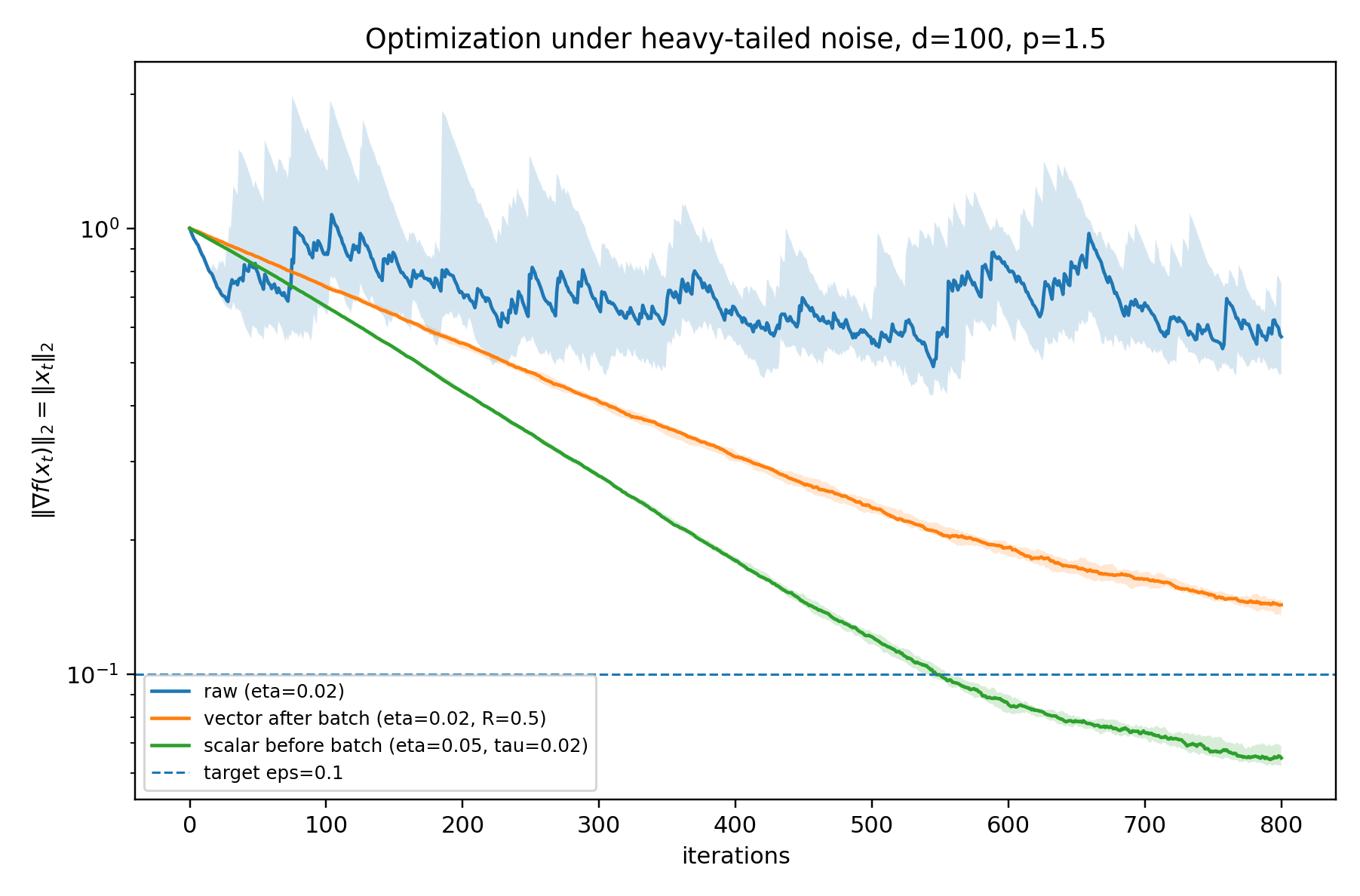}
    \end{minipage}
    \hfill
    \begin{minipage}{0.48\linewidth}
        \centering
        \safeincludegraphics[width=\linewidth]{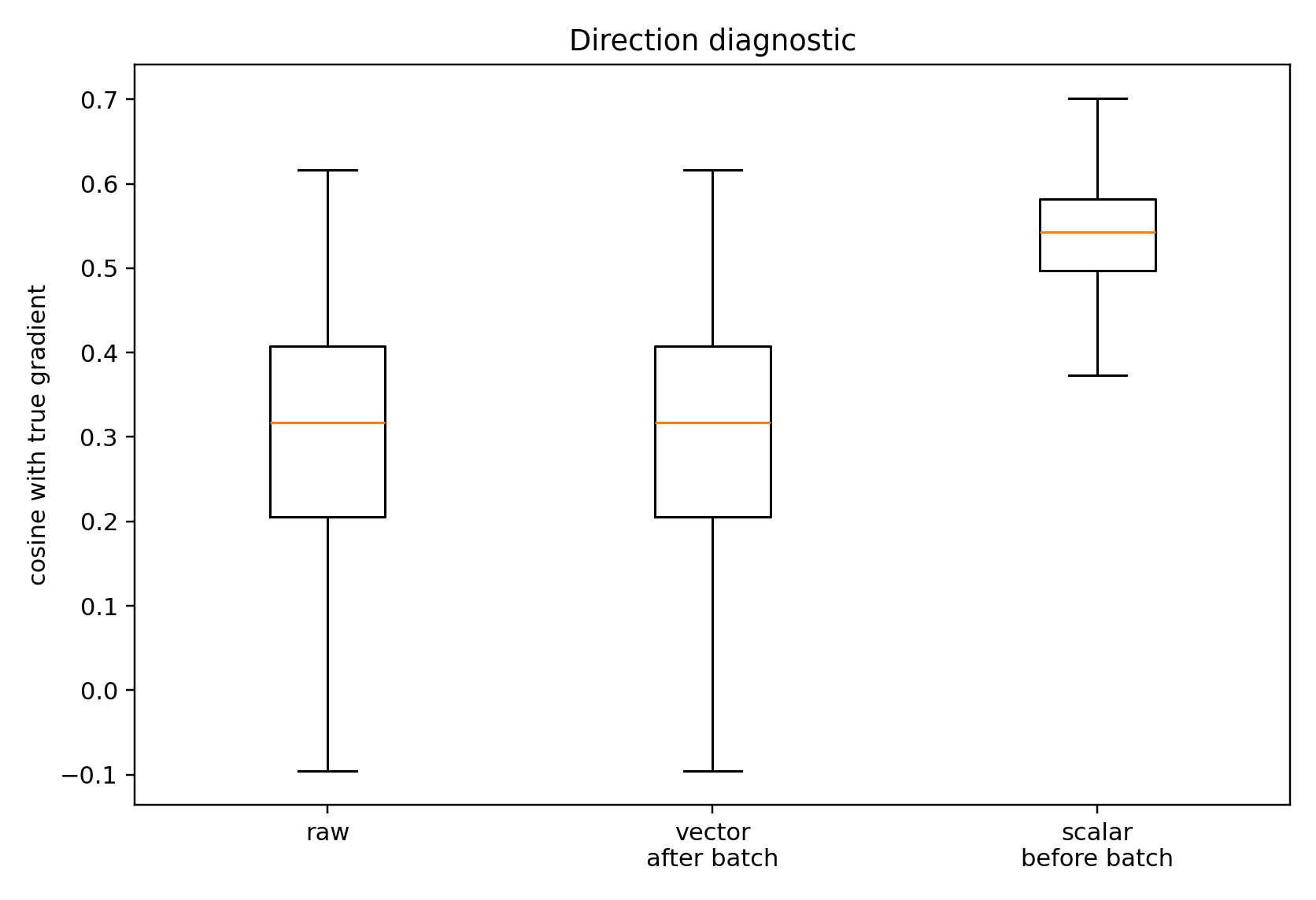}
    \end{minipage}
    \caption{
    Controlled heavy-tailed quadratic experiment with \(d=100\), Pareto tail
    exponent \(p=1.5\), and \(M=256\). Left: optimization curves for
    \(\|\nabla f(x_t)\|_2=\|x_t\|_2\). Raw ZO is unstable, final-vector clipping
    stabilizes but plateaus above the target, and scalar clipping before
    aggregation reaches the target. Right: cosine similarity with the true
    gradient. Final-vector clipping has essentially the same alignment as the
    raw estimator because it preserves the raw direction, whereas scalar
    clipping substantially improves alignment.
    }
    \label{fig:controlled-exp-main}
\end{figure}

Figure~\ref{fig:controlled-exp-main} shows that vector clipping after aggregation
improves over the raw estimator, but remains substantially worse than scalar
clipping before aggregation. The cosine diagnostic explains the gap:
\(\operatorname{clip}_{r_{\rm vec}}(g_{\rm raw})\) rescales the already-formed
estimator and therefore preserves its direction, while scalar clipping prevents
extreme directional derivatives from dominating the average. Additional
dimension and tail-index sweeps, fair hyperparameter tuning, and outlier
diagnostics are reported in Appendix~\ref{sec:experiments}.

\section{Discussion and Conclusion}

We studied stochastic zeroth-order optimization for smooth nonconvex objectives under heavy-tailed sample-gradient noise. The key challenge is that ZO methods do not observe the stochastic gradients on which the noise assumption is imposed; they only observe function values and construct directional finite-difference estimates. Our analysis shows that weak-$L_p$ control of the sample-gradient noise can be transferred to scalar two-point directional estimates under sample-wise smoothness and shared randomness.

We proposed RSC-ZO, which clips each scalar directional derivative before aggregation. This prevents extreme directional measurements from dominating the final gradient estimator. Under weak-$L_p$ noise, RSC-ZO achieves a high-probability complexity bound matching the optimal first-order $\epsilon$-dependence. At $p=2$, it recovers the classical $d\epsilon^{-4}$ stochastic ZO dependence, but under the weaker weak-$L_2$ condition and with high probability. We also analyzed a momentum variant, showing that momentum reduces the required running batch size while preserving the total query complexity up to logarithmic factors.

Our experiments support the proposed mechanism. In controlled heavy-tailed quadratic problems, scalar clipping improves the estimator direction compared with raw ZO estimation and final-vector clipping. In RoBERTa-large fine-tuning, RSC-ZO improves over MeZO under the same forward-pass budget.

\paragraph{Limitations.}
Our theory assumes sample-wise smoothness, weak-$L_p$ control of the sample-gradient noise, and shared randomness in the two-point estimator. These assumptions are natural for isolating the effect of heavy-tailed sample-gradient noise in two-point ZO methods, but they do not cover nonsmooth objectives, pure function-value noise models, or black-box systems where shared randomness is unavailable. The theoretically prescribed clipping threshold also depends on problem-specific quantities; in our experiments, we therefore tune the threshold by grid search. Developing adaptive clipping rules with comparable high-probability guarantees is an important direction for future work.

Our empirical results are intended to support the estimator-level mechanism predicted by the theory. The controlled heavy-tailed quadratic experiment isolates the benefit of clipping scalar directional derivatives before aggregation, while the RoBERTa-large fine-tuning experiments show that the same mechanism can improve MeZO-style training under a fixed forward-evaluation budget. A broader empirical study across model scales, tasks, hyperparameter budgets, and robust ZO alternatives would further clarify the practical scope of the method. Finally, although the dimension dependence matches classical stochastic ZO benchmarks at $p=2$, it remains a bottleneck in very high-dimensional problems. Combining scalar clipping with structured perturbations or low-dimensional subspace methods is a promising direction.

\bibliographystyle{plainnat}
\bibliography{references}

\newpage
\appendix

\section{Weak-\(L_2\) Noise Can Have Infinite Variance}

A natural question is whether the specialization \(p=2\) in
Assumption~\ref{ass:hp-shared-prob} simply recovers the classical bounded-variance setting. Indeed, by
Markov's inequality, any bounded-variance model satisfies Assumption~\ref{ass:hp-shared-prob} with
\(p=2\). More precisely, if
\[
\E_\xi\|\nabla F(x;\xi)-\nabla f(x)\|_2^2\le \sigma^2
\quad \text{for all } x\in\R^d,
\]
then
\[
\Pb\!\left(
\|\nabla F(x;\xi)-\nabla f(x)\|_2>t
\right)
\le
\frac{\sigma^2}{t^2}
\quad \text{for all } x\in \R^d,\ t>0.
\]
Thus, the bounded-variance regime is contained in our \(p=2\) framework. The converse, however, is
false: even at \(p=2\), Assumption~\ref{ass:hp-shared-prob} still allows stochastic models with infinite
gradient-noise variance. Therefore, the \(p=2\) case of our theory is not a reformulation of
bounded-variance results, but a genuine extension beyond them.

\begin{proposition}[The case \(p=2\) still allows infinite variance]
\label{prop:p2-infinite-variance}
Let \(f_0:\R^d\to\R\) be any \(L\)-smooth function, and let \(Z\) be a real-valued random variable
with density
\[
\rho(z)=|z|^{-3}\mathbf 1_{\{|z|\ge 1\}}.
\]
Define the stochastic objective
\[
F(x;\xi):=f_0(x)+Z(\xi)\,x_1,
\]
where \(x_1\) denotes the first coordinate of \(x\), and let
\[
f(x):=\E_\xi[F(x;\xi)].
\]
Then the following hold:
\begin{enumerate}
    \item \(f(x)=f_0(x)\), and for \(\mathbb P\)-almost every \(\xi\), the map \(x\mapsto F(x;\xi)\)
    is \(L\)-smooth.
    \item The sample gradient noise satisfies
    \[
    \nabla F(x;\xi)-\nabla f(x)=Z(\xi)e_1,
    \]
    where \(e_1=(1,0,\dots,0)^\top\).
    \item Assumption~\ref{ass:hp-shared-prob} holds with \(p=2\) and \(\sigma=1\), since for every \(t>0\),
    \[
    \Pb\!\left(
    \|\nabla F(x;\xi)-\nabla f(x)\|_2>t
    \right)
    =
    \Pb(|Z|>t)
    \le
    t^{-2}.
    \]
    \item The second moment is infinite:
    \[
    \E_\xi\|\nabla F(x;\xi)-\nabla f(x)\|_2^2
    =
    \E[Z^2]
    =
    \infty.
    \]
\end{enumerate}
\end{proposition}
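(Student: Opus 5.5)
The proof is a direct verification of the four items, and almost all of it follows from the explicit density of $Z$. First I would check that $\rho$ is a probability density: by symmetry, $\int_{|z|\ge1}|z|^{-3}\,dz = 2\int_1^\infty z^{-3}\,dz = 2\cdot\tfrac12 = 1$. Next, $Z$ is integrable with mean zero, since $\int_1^\infty z\cdot z^{-3}\,dz=1<\infty$ and $\rho$ is even, so $\E[Z]=0$.

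For item 1, linearity of expectation gives $f(x)=f_0(x)+\E[Z]\,x_1=f_0(x)$. This step needs $f_0(x)$ to be finite, which holds because $f_0$ is real-valued. For every $\xi$, the map $x\mapsto F(x;\xi)$ is $f_0$ plus a linear function. Its gradient is therefore $\nabla f_0(x)+Z(\xi)e_1$, and the gradient difference $\nabla F(x;\xi)-\nabla F(y;\xi)=\nabla f_0(x)-\nabla f_0(y)$ does not depend on $Z$. Hence $F(\cdot;\xi)$ is $L$-smooth for every $\xi$, not only almost every $\xi$.

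Item 2 follows immediately: $\nabla F(x;\xi)-\nabla f(x)=Z(\xi)e_1$, whose norm is $|Z(\xi)|$.

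For item 3, I compute the tail in two cases.
\begin{itemize}
\item If $t\ge1$, then $\Pb(|Z|>t)=2\int_t^\infty z^{-3}\,dz=t^{-2}$.
\item If $t<1$, then $\Pb(|Z|>t)\le 1<t^{-2}$.
\end{itemize}
So Assumption~\ref{ass:hp-shared-prob} holds with $p=2$ and $\sigma=1$, uniformly in $x$, because the noise does not depend on $x$.

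For item 4, $\E[Z^2]=2\int_1^\infty z^2\cdot z^{-3}\,dz=2\int_1^\infty z^{-1}\,dz=\infty$.

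It is also worth confirming that the standing condition of Remark~\ref{rem:f-smooth} holds: $\E\|\nabla F(x_0;\xi)\|_2\le\|\nabla f_0(x_0)\|_2+\E|Z|<\infty$.

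There is no real obstacle here. The only points needing care are that $\E|Z|<\infty$, which justifies $f=f_0$, and the case $t<1$ in the tail bound.
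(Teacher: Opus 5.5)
Your proposal is correct and follows essentially the same direct verification as the paper. Integrability and symmetry of $Z$ give $\E[Z]=0$ and hence $f=f_0$, the linear perturbation leaves smoothness intact, the tail is handled in the cases $t\ge 1$ and $t<1$, and $\E[Z^2]$ diverges via $\int_1^\infty z^{-1}\,dz$. Your additional checks (normalization of $\rho$ and the integrability condition of Remark~\ref{rem:f-smooth}) are points the paper leaves implicit.
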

\begin{proof}[Proof of Proposition~\ref{prop:p2-infinite-variance}]
Since \(\rho\) is symmetric, \(Z\) has zero mean whenever the first moment exists. Moreover,
\[
\E|Z|
=
\int_{\R}|z|\rho(z)\,dz
=
2\int_1^\infty z\cdot z^{-3}\,dz
=
2\int_1^\infty z^{-2}\,dz
<
\infty,
\]
so \(\E[Z]=0\). Therefore,
\[
f(x)=\E[F(x;\xi)]
=
f_0(x)+\E[Z]\,x_1
=
f_0(x).
\]
Also,
\[
\nabla F(x;\xi)=\nabla f_0(x)+Z(\xi)e_1,
\]
and hence
\[
\nabla F(x;\xi)-\nabla f(x)=Z(\xi)e_1.
\]
Because the added term \(Z(\xi)x_1\) is linear in \(x\), it does not affect smoothness; therefore
\(F(\cdot;\xi)\) is \(L\)-smooth whenever \(f_0\) is \(L\)-smooth.

Since
\[
\|\nabla F(x;\xi)-\nabla f(x)\|_2=|Z(\xi)|,
\]
for every \(t\ge 1\),
\[
\Pb(|Z|>t)
=
2\int_t^\infty z^{-3}\,dz
=
t^{-2},
\]
while for \(0<t<1\) one trivially has \(\Pb(|Z|>t)\le 1\le t^{-2}\). Thus
\[
\forall t>0,\qquad
\Pb\!\left(
\|\nabla F(x;\xi)-\nabla f(x)\|_2>t
\right)
\le
t^{-2},
\]
so Assumption~\ref{ass:hp-shared-prob} holds with \(p=2\) and \(\sigma=1\).

Finally,
\[
\E[Z^2]
=
\int_{\R} z^2 \rho(z)\,dz
=
2\int_1^\infty z^2\cdot z^{-3}\,dz
=
2\int_1^\infty z^{-1}\,dz
=
\infty.
\]
Hence the gradient noise has infinite variance.
\end{proof}

\paragraph{Takeaway.}
The case \(p=2\) is not simply a restatement of bounded-variance theory. It strictly enlarges the
noise model while preserving, up to logarithmic factors, the benchmark complexity
\[
O\!\left(d\,\sigma^2\,\epsilon^{-4}\right)
\]
known for stochastic zeroth-order methods in the \emph{individual smoothness} setting under a
bounded-variance assumption on the sample gradient noise
\citep{ghadimi2013stochastic,chayti2025random}. In particular,
\cref{thm:hp-shared-clipped} achieves the benchmark complexity bound
\(O(d\,\sigma^2\,\epsilon^{-4})\) up to logarithmic factors under the weaker polynomial-tail
assumption (see Remark~\ref{rem:hp-shared-complexity}).

\section{High-Probability Analysis of Base RSC-ZO}
\label{sec:hp-clipped-shared}

\paragraph{Notation.}
Define
$
f_*:=\inf_{x\in\mathbb R^d} f(x),
\Delta_0:=f(x_0)-f_*,
$
and
$
\bar\Delta_0:=\Delta_0+\frac{L\mu^2}{2}.
$
For brevity, write
\[
S_\mu
:=
\frac{\sqrt{L\bar\Delta_0}+\sigma}{\sqrt d}
+
L\mu .
\]

\paragraph{Goal of this section.}
Our goal is to prove a high-probability bound on the estimator error
\[
\|g(x)-\nabla f_\mu(x)\|_2,
\]
where
\[
Y_\ell
:=
\frac{F(x+\mu u_\ell;\xi_\ell)-F(x-\mu u_\ell;\xi_\ell)}{2\mu}
\quad\text{and}\quad
g(x)
:=
\frac{d}{M}\sum_{\ell=1}^M \psi_\tau(Y_\ell)u_\ell .
\]
We decompose the estimator error as
\[
g(x)-\nabla f_\mu(x)
=
\underbrace{(g(x)-\E[g(x)])}_{A}
+
\underbrace{(\E[g(x)]-\nabla f_\mu(x))}_{B}.
\]
Here, \(A\) is the empirical fluctuation of the clipped sample average, while
\(B\) is the bias introduced by clipping. Hence, to obtain a high-probability
upper bound on \(\|g(x)-\nabla f_\mu(x)\|_2\), it suffices to bound
\(\|A\|_2\) and \(\|B\|_2\) separately. We first consider \(B\).

Let
$
Y:=\frac{F(x+\mu u;\xi)-F(x-\mu u;\xi)}{2\mu},
$
where \(u\sim\Unif(\mathbb S^{d-1})\) is independent of \(\xi\). Then
\[
B
=
\E[g(x)]-\nabla f_\mu(x)
=
d\,\E\bigl[(\psi_\tau(Y)-Y)u\bigr].
\]
Therefore,
\[
\|B\|_2
=
d\,\Bigl\|\E\bigl[(\psi_\tau(Y)-Y)u\bigr]\Bigr\|_2
\le
d\,\E\bigl[|\psi_\tau(Y)-Y|\bigr].
\]

To control the quantity \(\E[|\psi_\tau(Y)-Y|]\), we first establish the following abstract clipping-bias bound.

\begin{lemma}[Clipping bias under weak-$L_p$ control]
\label{lem:hp-shared-clipped-bias-abstract}
Let \(Z\) be a real-valued random variable such that, for some \(S>0\),
\[
\forall t\ge 4S,
\qquad
\Pb(|Z|>t)\le 2^{2p+1}S^p t^{-p}.
\]
Then, for every \(\tau\ge 4S\),
\[
\E\bigl[|Z-\psi_\tau(Z)|\bigr]
\le
\frac{2^{2p+1}}{p-1}\,S^p\tau^{1-p}.
\]
\end{lemma}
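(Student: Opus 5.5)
The plan is to write the clipping error as an explicit function of $|Z|$ and integrate its tail. By definition $\psi_\tau(z)=z\min\{1,\tau/|z|\}$, so $Z-\psi_\tau(Z)=0$ when $|Z|\le\tau$, and $Z-\psi_\tau(Z)=\operatorname{sign}(Z)(|Z|-\tau)$ otherwise. Hence
\[
|Z-\psi_\tau(Z)|=(|Z|-\tau)_+ .
\]

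Next apply the layer-cake formula to the nonnegative variable $(|Z|-\tau)_+$:
\[
\E\bigl[(|Z|-\tau)_+\bigr]=\int_0^\infty \Pb\bigl(|Z|-\tau>s\bigr)\,ds=\int_\tau^\infty \Pb(|Z|>t)\,dt .
\]
Since $\tau\ge 4S$, every $t$ in the range of integration satisfies $t\ge 4S$, so the hypothesis applies pointwise. This gives
\[
\E\bigl[|Z-\psi_\tau(Z)|\bigr]\le 2^{2p+1}S^p\int_\tau^\infty t^{-p}\,dt .
\]

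Because $p>1$, the integral converges and equals $\tau^{1-p}/(p-1)$. Substituting yields exactly $\frac{2^{2p+1}}{p-1}S^p\tau^{1-p}$. This also shows that $\E|Z-\psi_\tau(Z)|$ is finite, so no separate integrability assumption on $Z$ is needed.

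There is no real obstacle. The only points to check are the identity $|Z-\psi_\tau(Z)|=(|Z|-\tau)_+$, including the convention $\psi_\tau(0)=0$, which is irrelevant here since that case lies in $\{|Z|\le\tau\}$, and the use of the tail bound only on $[\tau,\infty)\subseteq[4S,\infty)$, where the hypothesis holds.
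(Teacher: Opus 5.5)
Your proof is correct and follows the paper's argument step for step: the identity $|Z-\psi_\tau(Z)|=(|Z|-\tau)_+$, the layer-cake formula giving $\int_\tau^\infty \Pb(|Z|>t)\,dt$, and the tail bound applied on $[\tau,\infty)\subseteq[4S,\infty)$. Evaluating $\int_\tau^\infty t^{-p}\,dt=\tau^{1-p}/(p-1)$ for $p>1$ then finishes the argument, just as in the paper.
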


It remains to verify that the directional estimator \(Y\) satisfies the hypothesis of
Lemma~\ref{lem:hp-shared-clipped-bias-abstract}. To this end, we use the decomposition
\[
Y=\bigl(Y-D_\mu f(x,u)\bigr)+D_\mu f(x,u).
\]
Thus, it is enough to control the tails of \(Y-D_\mu f(x,u)\) and
\(D_\mu f(x,u)\). The first term measures the deviation of the stochastic
directional estimator around its conditional mean, while the second is the
directional signal itself. These are controlled in
Lemmas~\ref{lem:hp-shared-directional-tail} and
\ref{lem:hp-shared-directional-signal-tail}, respectively.

\begin{lemma}[Directional weak-$L_p$ bound]
\label{lem:hp-shared-directional-tail}
Assume that Assumptions~\ref{ass:sample-smooth} and
\ref{ass:hp-shared-prob} hold. Let \(u\sim\Unif(\mathbb S^{d-1})\), independent
of \(\xi\). Then, for every fixed \(x\in\R^d\) and every \(s\ge 1\),
\[
\Pb_{u,\xi}\!\left(
\left|
\frac{F(x+\mu u;\xi)-F(x-\mu u;\xi)}{2\mu}
-
D_\mu f(x,u)
\right|
>
\left(\frac{\sigma}{\sqrt d}+L\mu\right)s
\right)
\le s^{-p}.
\]
\end{lemma}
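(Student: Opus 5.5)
The plan is to reduce the scalar deviation to a single projected noise vector at the fixed point \(x\), plus a deterministic smoothing remainder of size \(L\mu\). I then control the projected noise by conditioning on the direction \(u\).

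\textbf{Step 1 (integral representation).} Write \(N(y):=\nabla F(y;\xi)-\nabla f(y)\). Both \(F(\cdot;\xi)\) (Assumption~\ref{ass:sample-smooth}) and \(f\) (Remark~\ref{rem:f-smooth}) are differentiable. The fundamental theorem of calculus along the segment \(s\mapsto x+su\), \(s\in[-\mu,\mu]\), then gives
\[
\frac{F(x+\mu u;\xi)-F(x-\mu u;\xi)}{2\mu}-D_\mu f(x,u)
=\frac{1}{2\mu}\int_{-\mu}^{\mu}\ip{N(x+su)}{u}\,ds .
\]

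\textbf{Step 2 (freeze the noise at \(x\)).} Both \(\nabla F(\cdot;\xi)\) and \(\nabla f\) are \(L\)-Lipschitz, so \(\norm{N(x+su)-N(x)}_2\le 2L|s|\). Since \(\norm{u}_2=1\), the deviation is at most
\[
|\ip{N(x)}{u}|+\frac{1}{2\mu}\int_{-\mu}^{\mu}2L|s|\,ds=|\ip{N(x)}{u}|+L\mu .
\]
Fix \(s\ge1\). On the event in the statement we therefore have \(|\ip{N(x)}{u}|>(\sigma/\sqrt d)s+L\mu(s-1)\ge \sigma s/\sqrt d\). So it suffices to show
\[
\Pb_{u,\xi}\bigl(|\ip{N(x)}{u}|>\sigma s/\sqrt d\bigr)\le s^{-p}.
\]

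\textbf{Step 3 (directional transfer).} This is the genuinely ZO-specific step, and the one I expect to be the main obstacle, because the \(1/\sqrt d\) gain must come from the randomness of \(u\) and not from the noise. I use \(|\ip{N(x)}{u}|\le\norm{N(x)}_2\,|\ip{e}{u}|\), where \(e:=N(x)/\norm{N(x)}_2\) (the case \(N(x)=0\) is trivial). I then condition on \(u\). Because \(u\) is independent of \(\xi\) and \(x\) is fixed, Assumption~\ref{ass:hp-shared-prob} applied at \(x\) gives
\[
\Pb_\xi\bigl(\norm{N(x)}_2|\ip{e}{u}|>\sigma s/\sqrt d\mid u\bigr)\le\min\Bigl\{1,\bigl(\sqrt d\,|\ip{e}{u}|/s\bigr)^p\Bigr\}.
\]
A subtlety is that \(e\) depends on \(\xi\). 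I handle it by rotational invariance: conditionally on \(\xi\), the variable \(\ip{e}{u}\) has the law of \(u_1\), independent of \(\xi\). I would therefore first rewrite the probability as \(\Pb(\norm{N(x)}_2|u_1'|>\sigma s/\sqrt d)\) with \(u_1'\) an independent copy of \(u_1\), and only then condition on \(u_1'\).

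\textbf{Step 4 (average over the sphere).} Taking expectation over \(u\) and applying Jensen's inequality with \(p/2\le1\) gives
\[
\E\bigl[(\sqrt d|u_1|)^p\bigr]\le\bigl(d\,\E[u_1^2]\bigr)^{p/2}=1,
\]
since \(\E[u_1^2]=1/d\). Hence the probability in Step 2 is at most \(s^{-p}\), which is the claim.
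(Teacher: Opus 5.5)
Your proposal is correct and essentially identical to the paper's proof. The fundamental theorem of calculus reduces the deviation to $|\langle \nabla F(x;\xi)-\nabla f(x),u\rangle|+L\mu$; rotational invariance (conditionally on $\xi$) then replaces the $\xi$-dependent direction by an independent copy of $\sqrt d\,|U_1|$, which you condition on to apply Assumption~\ref{ass:hp-shared-prob} at the fixed point $x$, and Jensen's bound $\E[(\sqrt d\,|U_1|)^p]\le 1$ finishes. The preliminary display in your Step 3 (conditioning on $u$ while $e$ still depends on $\xi$) is not valid as stated, but you already replace it with the independent-copy argument, exactly as the paper does; just avoid reusing $s$ as both the integration variable and the tail parameter.
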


\begin{lemma}[Localized weak-$L_p$ bound for the noiseless directional signal]
\label{lem:hp-shared-directional-signal-tail}
Assume that Assumptions~\ref{ass:Well-defined} and
\ref{ass:sample-smooth} hold. Let \(p\in(1,2]\). Let \(x\in\R^d\) satisfy
\(f(x)-f_*\le 4\bar\Delta_0\), and let
\(u\sim\Unif(\mathbb S^{d-1})\). Then, for every \(s\ge 1\),
\[
\Pb_u\!\left(
|D_\mu f(x,u)|
>
\left(\frac{\sqrt{8L\bar\Delta_0}}{\sqrt d}+L\mu\right)s
\right)
\le s^{-p}.
\]
\end{lemma}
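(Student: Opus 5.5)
The plan is to split $D_\mu f(x,u)$ into a first-order term $\langle \nabla f(x),u\rangle$ and a finite-difference remainder. The remainder is bounded deterministically by $L\mu$, and the linear term is controlled by a second-moment (Chebyshev) bound on the sphere. The localization $f(x)-f_*\le 4\bar\Delta_0$ enters only through a bound on $\|\nabla f(x)\|_2$.

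\emph{Step 1 (finite-difference remainder).} By Remark~\ref{rem:f-smooth}, $f$ is $L$-smooth. Hence for $u\in\mathbb S^{d-1}$,
\[
\bigl|f(x\pm\mu u)-f(x)\mp\mu\langle\nabla f(x),u\rangle\bigr|\le \tfrac{L\mu^2}{2}.
\]
Subtracting the two expansions and dividing by $2\mu$ gives
\[
D_\mu f(x,u)=\langle\nabla f(x),u\rangle+r(x,u),\qquad |r(x,u)|\le \tfrac{L\mu}{2}\le L\mu .
\]

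\emph{Step 2 (gradient size from localization).} Apply the standard bound for $L$-smooth functions bounded below (the paper's Lemma~\ref{lem:hp-shared-grad-subopt}):
\[
\|\nabla f(x)\|_2^2\le 2L\bigl(f(x)-f_*\bigr)\le 8L\bar\Delta_0 .
\]
Thus $\|\nabla f(x)\|_2\le\sqrt{8L\bar\Delta_0}$.

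\emph{Step 3 (tail of the projection).} For $u\sim\Unif(\mathbb S^{d-1})$, rotational invariance gives $\E[uu^\top]=I_d/d$, so $\E\langle v,u\rangle^2=\|v\|_2^2/d$ for any fixed $v$. Now fix $s\ge1$. Since $|r|\le L\mu\le L\mu s$, the event
\[
|D_\mu f(x,u)|>\bigl(\sqrt{8L\bar\Delta_0}/\sqrt d+L\mu\bigr)s
\]
is contained in the event $|\langle\nabla f(x),u\rangle|>\sqrt{8L\bar\Delta_0}\,s/\sqrt d$. If $\nabla f(x)=0$, this event is empty. Otherwise, Chebyshev's inequality together with Step 2 bounds its probability by
\[
\frac{\|\nabla f(x)\|_2^2/d}{8L\bar\Delta_0 s^2/d}\le s^{-2}\le s^{-p},
\]
where the last inequality uses $s\ge1$ and $p\le2$.

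There is no serious obstacle. The only points needing care are the degenerate case $\nabla f(x)=0$ (or $\bar\Delta_0$ treated as positive, which holds since $\mu>0$) and the direction of the inclusion of events. The restriction $s\ge1$ is exactly what lets the deterministic remainder be absorbed into $L\mu s$ and lets $s^{-2}\le s^{-p}$.
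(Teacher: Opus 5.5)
Your proof is correct and follows the paper's argument: the same split $D_\mu f(x,u)=\langle\nabla f(x),u\rangle+R_\mu(x,u)$ with $|R_\mu|\le L\mu/2$ (you get it from two Taylor bounds, the paper from the fundamental theorem of calculus), the same gradient bound $\|\nabla f(x)\|_2^2\le 8L\bar\Delta_0$, and the same use of $\E[\langle v,u\rangle^2]=1/d$. The only cosmetic difference is in the last step: you apply Chebyshev and then use $s^{-2}\le s^{-p}$, whereas the paper first uses Jensen to get $\E[(\sqrt d\,|\langle v,u\rangle|)^p]\le 1$ and then applies Markov at power $p$.
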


\begin{lemma}[Localized weak-$L_p$ bound for the shared-randomness directional estimator]
\label{lem:hp-shared-directional-total-tail}
Assume that Assumptions~\ref{ass:Well-defined}, \ref{ass:sample-smooth}, and
\ref{ass:hp-shared-prob} hold. Let \(x\in\R^d\) satisfy
\(f(x)-f_*\le 4\bar\Delta_0\). Let
\(u\sim\Unif(\mathbb S^{d-1})\), independent of \(\xi\), and define
$
Y:=\frac{F(x+\mu u;\xi)-F(x-\mu u;\xi)}{2\mu}.
$
Then, for every \(s\ge 1\),
\[
\Pb\!\left(
|Y|>
4S_\mu s
\right)
\le 2s^{-p}.
\]
Equivalently, for every \(t\ge 4S_\mu\),
\[
\Pb(|Y|>t)
\le
2^{2p+1}
S_\mu^p t^{-p}.
\]
\end{lemma}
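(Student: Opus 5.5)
The plan is to combine Lemmas~\ref{lem:hp-shared-directional-tail} and \ref{lem:hp-shared-directional-signal-tail} through the decomposition $Y=(Y-D_\mu f(x,u))+D_\mu f(x,u)$ and a union bound. Write
\[
a:=\frac{\sigma}{\sqrt d}+L\mu,\qquad b:=\frac{\sqrt{8L\bar\Delta_0}}{\sqrt d}+L\mu .
\]
For $s\ge1$ we have
\[
\{|Y|>(a+b)s\}\subseteq\{|Y-D_\mu f(x,u)|>as\}\cup\{|D_\mu f(x,u)|>bs\}.
\]
Lemma~\ref{lem:hp-shared-directional-tail} is stated jointly over $(u,\xi)$, so it bounds the first event by $s^{-p}$. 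Lemma~\ref{lem:hp-shared-directional-signal-tail} bounds the second event by $s^{-p}$; this uses the localization $f(x)-f_*\le4\bar\Delta_0$, and since the event depends only on $u$, its probability is the same under the joint law. Together these give $\Pb(|Y|>(a+b)s)\le 2s^{-p}$.

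It then suffices to show $a+b\le 4S_\mu$. We have
\[
a+b=\frac{\sigma+\sqrt8\,\sqrt{L\bar\Delta_0}}{\sqrt d}+2L\mu .
\]
Since $\sqrt8<3\le4$ and $2\le4$, this is at most $4\bigl(\frac{\sqrt{L\bar\Delta_0}+\sigma}{\sqrt d}+L\mu\bigr)=4S_\mu$. Monotonicity of the tail then gives $\Pb(|Y|>4S_\mu s)\le\Pb(|Y|>(a+b)s)\le 2s^{-p}$.

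For the equivalent form, given $t\ge4S_\mu$, set $s:=t/(4S_\mu)\ge1$. This yields
\[
\Pb(|Y|>t)\le 2\,(4S_\mu)^p t^{-p}=2^{2p+1}S_\mu^p t^{-p}.
\]
This is exactly the hypothesis required by Lemma~\ref{lem:hp-shared-clipped-bias-abstract} with $S=S_\mu$. There is no real obstacle once the two lemmas are available; the only care points are the constant bookkeeping ($\sqrt8\le 4$) and checking that both tail bounds are taken with respect to the same joint probability over $(u,\xi)$.
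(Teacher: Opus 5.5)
Your proposal is correct and essentially matches the paper's proof: the same decomposition $Y=(Y-D_\mu f(x,u))+D_\mu f(x,u)$, a union bound over Lemmas~\ref{lem:hp-shared-directional-tail} and \ref{lem:hp-shared-directional-signal-tail}, and the substitution $s=t/(4S_\mu)$. The only cosmetic difference is that the paper splits the threshold as $3S_\mu s+S_\mu s$, bounding each scale separately by a multiple of $S_\mu$, whereas you bound the sum $a+b\le 4S_\mu$ directly.
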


We may now combine the preceding ingredients to obtain the following deterministic bound on the clipping bias term \(B\).

\begin{lemma}[Bias induced by clipping]
\label{lem:hp-shared-clipping-bias}
Assume that Assumptions~\ref{ass:Well-defined}, \ref{ass:sample-smooth}, and
\ref{ass:hp-shared-prob} hold. Let \(x\in\R^d\) satisfy
$
f(x)-f_*\le 4\bar\Delta_0.
$
Let \(u\sim\Unif(\mathbb S^{d-1})\), independent of \(\xi\), and set
$
Y:=
\frac{F(x+\mu u;\xi)-F(x-\mu u;\xi)}{2\mu}.
$
Then, for every \(\tau\ge 4S_\mu\),
\[
\Bigl\|d\,\E[\psi_\tau(Y)u]-\nabla f_\mu(x)\Bigr\|_2
\le
\frac{2^{2p+1}}{p-1}\,
d\,S_\mu^p\,\tau^{1-p}.
\]
Equivalently, if
$
B:=\E[g(x)]-\nabla f_\mu(x),
$
then
\[
\|B\|_2
\le
\frac{2^{2p+1}}{p-1}\,
d\,S_\mu^p\,\tau^{1-p}.
\]
\end{lemma}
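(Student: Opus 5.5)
The plan is to reduce the vector bias to a scalar clipping bias, and then verify the tail hypothesis of Lemma~\ref{lem:hp-shared-clipped-bias-abstract} using Lemma~\ref{lem:hp-shared-directional-total-tail}.

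\emph{Step 1: identify the unclipped expectation.} By Assumption~\ref{ass:sample-smooth} and Remark~\ref{rem:f-smooth}, $\E_\xi[F(y;\xi)]=f(y)$ for $y=x\pm\mu u$, because $u$ is independent of $\xi$. Hence
\[
\E[Y\mid u]=D_\mu f(x,u).
\]
Combined with the two-point representation of $\nabla f_\mu$, this gives
\[
d\,\E[Yu]=\nabla f_\mu(x).
\]
For this to be legitimate, $Y$ must be integrable. This follows from the tail bound of Lemma~\ref{lem:hp-shared-directional-total-tail}, since $p>1$.

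\emph{Step 2: reduce to a scalar quantity.} Subtracting the identity from Step 1,
\[
d\,\E[\psi_\tau(Y)u]-\nabla f_\mu(x)=d\,\E\bigl[(\psi_\tau(Y)-Y)u\bigr].
\]
Since $\|u\|_2=1$, Jensen's inequality for the norm bounds this by $d\,\E|\psi_\tau(Y)-Y|$.

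\emph{Step 3: apply the abstract lemma.} Lemma~\ref{lem:hp-shared-directional-total-tail} holds under exactly the stated localization $f(x)-f_*\le4\bar\Delta_0$. It gives, for all $t\ge4S_\mu$,
\[
\Pb(|Y|>t)\le 2^{2p+1}S_\mu^p t^{-p}.
\]
This is precisely the hypothesis of Lemma~\ref{lem:hp-shared-clipped-bias-abstract} with $Z=Y$ and $S=S_\mu$. That lemma then yields, for $\tau\ge4S_\mu$,
\[
\E|Y-\psi_\tau(Y)|\le \frac{2^{2p+1}}{p-1}S_\mu^p\tau^{1-p}.
\]
Multiplying by $d$ gives the claimed bound.

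\emph{Step 4: the equivalent form.} Since the pairs $(u_\ell,\xi_\ell)$ are i.i.d. copies of $(u,\xi)$, we have $\E[g(x)]=d\,\E[\psi_\tau(Y)u]$. The bound on $\|B\|_2$ is therefore the same statement.

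The main delicate point is Step 1, namely the exchange of expectations. It needs $F(x\pm\mu u;\xi)$ to be integrable, so that $\E_\xi F=f$ and the conditional expectation identity holds. If this were in doubt, I would instead write $Y-D_\mu f(x,u)$ as $\frac1{2\mu}\int_{-\mu}^{\mu}\langle\nabla F(x+su;\xi)-\nabla f(x+su),u\rangle\,ds$. Its conditional mean vanishes by Remark~\ref{rem:f-smooth}, which gives $\E_\xi\nabla F=\nabla f$ with integrability from the dominating bound. The remaining steps are direct citations.
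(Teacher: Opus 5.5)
Your proposal is correct and follows the paper's proof essentially step for step: you identify $d\,\E[Yu]=\nabla f_\mu(x)$ via $\E[Y\mid u]=D_\mu f(x,u)$, bound the vector bias by $d\,\E|\psi_\tau(Y)-Y|$, and feed the tail bound of Lemma~\ref{lem:hp-shared-directional-total-tail} into Lemma~\ref{lem:hp-shared-clipped-bias-abstract}. Your remark that $Y$ is integrable (from the weak-$L_p$ tail with $p>1$) is a small piece of care that the paper leaves implicit.
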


We now turn to the empirical fluctuation term
\[
A
=
\frac{d}{M}\sum_{\ell=1}^M
\Bigl(\psi_\tau(Y_\ell)u_\ell-\E[\psi_\tau(Y_\ell)u_\ell]\Bigr).
\]
The summands are i.i.d. centered random vectors. We therefore control \(A\)
using the vector Bernstein inequality of Lemma~\ref{lem:hp-shared-vector-bernstein}.
To apply that lemma, it remains to verify an almost-sure bound on each summand
and a suitable second-moment bound. For each \(\ell\), define
\[
X_\ell
:=
\frac{d}{M}
\Bigl(\psi_\tau(Y_\ell)u_\ell-\E[\psi_\tau(Y_\ell)u_\ell]\Bigr).
\]
Since \(\|u_\ell\|_2=1\) almost surely and \(|\psi_\tau(Y_\ell)|\le \tau\), we have
\[
\|X_\ell\|_2
\le
\frac{d}{M}\Bigl(|\psi_\tau(Y_\ell)|+\|\E[\psi_\tau(Y_\ell)u_\ell]\|_2\Bigr)
\le
\frac{2d\tau}{M}.
\]
This verifies the first condition in Lemma~\ref{lem:hp-shared-vector-bernstein}.

To verify the second condition, we use the following clipped second-moment estimate.

\begin{lemma}[Clipped second moment under weak-$L_p$ control]
\label{lem:hp-shared-clipped-second-moment}
Let \(Z\) be a real-valued random variable such that, for some \(S>0\),
\[
\forall t\ge 4S,
\quad
\Pb(|Z|>t)\le 2^{2p+1}S^p t^{-p}.
\]
Then, for every \(1<p\le 2\) and every \(\tau\ge 4S\),
\[
\E[\psi_\tau(Z)^2]
\le
64\,S^p\tau^{2-p}\bigl(1+\log(\tau/S)\bigr).
\]
\end{lemma}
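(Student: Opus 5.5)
We use the layer-cake formula. Since $|\psi_\tau(Z)| = \min\{|Z|,\tau\}$,
\[
\E[\psi_\tau(Z)^2] = \int_0^{\tau} 2t\,\Pb(|Z|>t)\,dt .
\]
We split this integral at $t=4S$.

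On $[0,4S]$ we use the trivial bound $\Pb(|Z|>t)\le 1$, which contributes at most $16S^2$. On $[4S,\tau]$ we use the hypothesis $\Pb(|Z|>t)\le 2^{2p+1}S^pt^{-p}$, which gives
\[
2^{2p+2}S^p\int_{4S}^{\tau} t^{1-p}\,dt .
\]
The main care is needed here, because the bound must be uniform over $p\in(1,2]$ and the exponent $1-p$ approaches $-1$ as $p\to 2$. A naive antiderivative $\frac{t^{2-p}}{2-p}$ blows up at $p=2$, so we avoid it. Instead we write $t^{1-p}=t^{2-p}\cdot t^{-1}\le \tau^{2-p}t^{-1}$ for $t\le\tau$, using $2-p\ge 0$. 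This gives
\[
\int_{4S}^{\tau} t^{1-p}\,dt \le \tau^{2-p}\log\!\left(\frac{\tau}{4S}\right) \le \tau^{2-p}\log\!\left(\frac{\tau}{S}\right).
\]
Since $2^{2p+2}\le 64$ for $p\le 2$, the second piece is at most $64S^p\tau^{2-p}\log(\tau/S)$.

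It remains to absorb the first piece $16S^2$ into $64S^p\tau^{2-p}$. Write $S^2=S^pS^{2-p}$ and use $S\le\tau/4$, so that $S^{2-p}\le\tau^{2-p}4^{p-2}\le\tau^{2-p}$. Hence $16S^2\le 16S^p\tau^{2-p}\le 64S^p\tau^{2-p}$.

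Adding the two pieces yields
\[
\E[\psi_\tau(Z)^2]\le 64\,S^p\tau^{2-p}\bigl(1+\log(\tau/S)\bigr),
\]
which is the claimed bound. Note that $\log(\tau/S)\ge\log 4>0$, so every term above is nonnegative and no sign issues arise in the comparisons.
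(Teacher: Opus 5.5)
Your proof is correct and follows the paper's argument step for step. Both use the layer-cake identity restricted to $[0,\tau]$, split at $4S$, bound $t^{1-p}\le \tau^{2-p}t^{-1}$ to obtain a $\log(\tau/4S)\le\log(\tau/S)$ factor uniform in $p$, and absorb $16S^2\le 16S^p\tau^{2-p}$ using $2^{2p+2}\le 64$.
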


We are now ready to combine the vector Bernstein inequality with the clipped
second-moment estimate to control the empirical fluctuation term
$
A=g(x)-\E[g(x)].
$
This yields the following high-probability bound.

\begin{lemma}[Empirical fluctuation of the clipped estimator]
\label{lem:hp-shared-empirical-fluctuation}
Assume that Assumptions~\ref{ass:Well-defined}, \ref{ass:sample-smooth}, and
\ref{ass:hp-shared-prob} hold. Let \(x\in\mathbb R^d\) satisfy
$
f(x)-f_*\le 4\bar\Delta_0.
$
Let \(\rho\in(0,1)\), and let \(\tau\ge 4S_\mu\). For \(\ell=1,\dots,M\), let
\[
Y_\ell:=
\frac{F(x+\mu u_\ell;\xi_\ell)-F(x-\mu u_\ell;\xi_\ell)}{2\mu},
\qquad
\widetilde Y_\ell:=\psi_\tau(Y_\ell),
\]
where the pairs \((u_\ell,\xi_\ell)\) are independent across \(\ell\), with
\(u_\ell\sim\Unif(\mathbb S^{d-1})\) and \(\xi_\ell\sim\mathbb P\). Define
\[
A:=
\frac{d}{M}\sum_{\ell=1}^M
\Bigl(
\widetilde Y_\ell u_\ell-\E[\widetilde Y_\ell u_\ell]
\Bigr).
\]
Then, with probability at least \(1-\rho\),
\[
\begin{aligned}
\|A\|_2
\le\;&
\frac{8d}{\sqrt M}
S_\mu^{p/2}
\tau^{1-\frac p2}
\sqrt{
1+
\log\left(
\frac{\tau}{S_\mu}
\right)
}
\left(1+\sqrt{2\log(1/\rho)}\right)
+
\frac{8d\tau}{3M}\log(1/\rho).
\end{aligned}
\]
\end{lemma}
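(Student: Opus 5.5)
We apply the vector Bernstein inequality (Lemma~\ref{lem:hp-shared-vector-bernstein}) to the i.i.d.\ centered summands
\[
X_\ell=\frac{d}{M}\Bigl(\widetilde Y_\ell u_\ell-\E[\widetilde Y_\ell u_\ell]\Bigr),
\qquad A=\sum_{\ell=1}^M X_\ell .
\]
The almost-sure bound $\|X_\ell\|_2\le 2d\tau/M=:R$ was verified just before Lemma~\ref{lem:hp-shared-clipped-second-moment}. It remains to bound the variance proxy $V:=\sum_\ell\E\|X_\ell\|_2^2$, and then read off the deviation. We take the vector Bernstein inequality in the standard Hilbert-space form: with probability at least $1-\rho$,
\[
\|A\|_2\le \sqrt V+\sqrt{2V\log(1/\rho)}+\tfrac{2}{3}R\log(1/\rho)
\]
or an equivalent variant.

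\textbf{Variance proxy.} Since centering does not increase the second moment and $\|u_\ell\|_2=1$,
\[
\E\|X_\ell\|_2^2\le \frac{d^2}{M^2}\,\E\bigl[\widetilde Y_\ell^2\bigr]=\frac{d^2}{M^2}\,\E\bigl[\psi_\tau(Y_\ell)^2\bigr].
\]
Lemma~\ref{lem:hp-shared-directional-total-tail} (using the localization $f(x)-f_*\le 4\bar\Delta_0$) shows that $Y_\ell$ satisfies the tail hypothesis $\Pb(|Y_\ell|>t)\le 2^{2p+1}S_\mu^p t^{-p}$ for $t\ge 4S_\mu$. Hence Lemma~\ref{lem:hp-shared-clipped-second-moment} with $S=S_\mu$ and $\tau\ge 4S_\mu$ gives
\[
\E[\psi_\tau(Y_\ell)^2]\le 64\,S_\mu^p\tau^{2-p}\bigl(1+\log(\tau/S_\mu)\bigr).
\]
Summing over the $M$ terms,
\[
V\le \frac{64\,d^2}{M}\,S_\mu^p\tau^{2-p}\bigl(1+\log(\tau/S_\mu)\bigr),
\]
so that
\[
\sqrt V\le \frac{8d}{\sqrt M}\,S_\mu^{p/2}\tau^{1-p/2}\sqrt{1+\log(\tau/S_\mu)} .
\]

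\textbf{Conclusion.} Substituting into Bernstein, the first two terms combine into $\sqrt V\,\bigl(1+\sqrt{2\log(1/\rho)}\bigr)$, which is the first term of the claim. The range term is
\[
\tfrac23 R\log(1/\rho)=\frac{4d\tau}{3M}\log(1/\rho),
\]
which is at most the stated $\frac{8d\tau}{3M}\log(1/\rho)$. The extra factor of $2$ gives room if Lemma~\ref{lem:hp-shared-vector-bernstein} carries a slightly worse constant, for instance $\frac{4}{3}R$ instead of $\frac{2}{3}R$.

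\textbf{Main obstacle.} The only delicate point is matching the exact form of Lemma~\ref{lem:hp-shared-vector-bernstein}, whose statement we have not seen. If it is the Pinelis-type bound $\|A\|_2\le \sqrt{2V\log(2/\rho)}+\dots$ rather than the "$\sqrt V+{}$deviation" form, we would instead derive the claimed form from a bounded-differences (McDiarmid/Talagrand-type) argument:
\[
\E\|A\|_2\le\sqrt V,
\]
followed by a Bernstein-type concentration of $\|A\|_2$ around its mean with variance $V$ and range $R$. Either route yields the same structure; only constants need checking.
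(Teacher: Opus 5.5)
Your proposal is correct and follows the paper's proof essentially step for step. The steps are the same almost-sure bound $R=2d\tau/M$, the same variance proxy $V\le \frac{64d^2}{M}S_\mu^p\tau^{2-p}(1+\log(\tau/S_\mu))$ obtained from Lemma~\ref{lem:hp-shared-directional-total-tail} and Lemma~\ref{lem:hp-shared-clipped-second-moment}, and then Lemma~\ref{lem:hp-shared-vector-bernstein}. That lemma is stated exactly as $\sqrt V\,(1+\sqrt{2\log(1/\rho)})+\frac{4R}{3}\log(1/\rho)$, so $\frac{4}{3}\cdot\frac{2d\tau}{M}=\frac{8d\tau}{3M}$ gives the claimed range term with no slack; your McDiarmid/Talagrand fallback and the unverified $\frac{2}{3}R$ variant are therefore unnecessary.
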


Combining the fluctuation and bias bounds yields the following localized high-probability estimate for the full estimator error \(g(x)-\nabla f_\mu(x)\).

\begin{corollary}[Localized high-probability deviation of the clipped estimator]
\label{cor:hp-shared-localized}
Assume that Assumptions~\ref{ass:Well-defined}, \ref{ass:sample-smooth}, and
\ref{ass:hp-shared-prob} hold. Fix \(\delta\in(0,1)\) and \(T\ge 1\), and set
$
\lambda:=\log\frac{T}{\delta}.
$
Let \(x\in\R^d\) satisfy
$
f(x)-f_*\le 4\bar\Delta_0.
$
Let \(\tau\ge 4S_\mu\). For \(\ell=1,\dots,M\), let
\[
Y_\ell:=
\frac{F(x+\mu u_\ell;\xi_\ell)-F(x-\mu u_\ell;\xi_\ell)}{2\mu},
\qquad
g(x):=
\frac{d}{M}\sum_{\ell=1}^M \psi_\tau(Y_\ell)u_\ell,
\]
where the pairs \((u_\ell,\xi_\ell)\) are independent across \(\ell\), with
\(u_\ell\sim\Unif(\mathbb S^{d-1})\) and \(\xi_\ell\sim\mathbb P\). Then, with
probability at least \(1-\delta/T\),
\[
\begin{aligned}
\norm{g(x)-\nabla f_\mu(x)}_2
\le\;&
\frac{8d}{\sqrt M}
S_\mu^{p/2}
\tau^{1-\frac p2}
\sqrt{
1+\log\left(\frac{\tau}{S_\mu}\right)
}
\left(1+\sqrt{2\lambda}\right)
+
\frac{8d\tau}{3M}\lambda
+
\frac{2^{2p+1}}{p-1}\,
dS_\mu^p
\tau^{1-p}.
\end{aligned}
\]
\end{corollary}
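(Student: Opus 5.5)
The plan is to obtain Corollary~\ref{cor:hp-shared-localized} by adding the two estimates already established for the decomposition $g(x)-\nabla f_\mu(x)=A+B$. By the triangle inequality,
\[
\norm{g(x)-\nabla f_\mu(x)}_2\le \norm{A}_2+\norm{B}_2 .
\]
The two terms are treated separately, with only $A$ carrying randomness.

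\emph{Bias term.} The term $B=\E[g(x)]-\nabla f_\mu(x)$ is deterministic. The hypotheses $f(x)-f_*\le 4\bar\Delta_0$ and $\tau\ge 4S_\mu$ are exactly those of Lemma~\ref{lem:hp-shared-clipping-bias}, which gives
\[
\norm{B}_2\le \frac{2^{2p+1}}{p-1}\,dS_\mu^p\tau^{1-p}
\]
surely. This is the third term in the claimed bound.

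\emph{Fluctuation term.} I will apply Lemma~\ref{lem:hp-shared-empirical-fluctuation} with confidence parameter $\rho:=\delta/T$. Since $\delta\in(0,1)$ and $T\ge 1$, we have $\rho\in(0,1)$, as the lemma requires. The remaining hypotheses (localization of $x$, $\tau\ge 4S_\mu$, independent pairs $(u_\ell,\xi_\ell)$) are assumed verbatim in the corollary. With probability at least $1-\delta/T$, the lemma bounds $\norm{A}_2$ by its stated expression with $\log(1/\rho)=\log(T/\delta)=\lambda$. Substituting $\lambda$ produces the first two terms, namely the factor $1+\sqrt{2\lambda}$ and the term $\tfrac{8d\tau}{3M}\lambda$.

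\emph{Conclusion and main obstacle.} On the event from the fluctuation step, adding the sure bias bound gives the claimed inequality with probability at least $1-\delta/T$. No union bound is needed, because the bias estimate holds deterministically. There is no substantive obstacle: the only points to check are that the event from Lemma~\ref{lem:hp-shared-empirical-fluctuation} transfers unchanged and that $\rho=\delta/T$ is admissible.
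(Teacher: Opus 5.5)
Your proposal is correct and follows the paper's proof essentially verbatim. Like the paper, you apply the triangle inequality to $A+B$, use the deterministic bias bound of Lemma~\ref{lem:hp-shared-clipping-bias}, and invoke Lemma~\ref{lem:hp-shared-empirical-fluctuation} with $\rho=\delta/T$ so that $\log(1/\rho)=\lambda$; your explicit check that $\rho\in(0,1)$ is a small detail the paper leaves implicit.
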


We now instantiate Corollary~\ref{cor:hp-shared-localized} with the clipping
level used in the algorithm. This choice balances the leading fluctuation
and bias terms up to logarithmic factors and gives the following simplified
localized deviation bound.

\begin{corollary}[Appendix form of Theorem~\ref{thm:tuned-dev}]
\label{cor:hp-shared-localized-tuned}
Assume that Assumptions~\ref{ass:Well-defined}, \ref{ass:sample-smooth}, and
\ref{ass:hp-shared-prob} hold. Fix \(\delta\in(0,1]\) and \(T\ge 3\), and set
$
\lambda:=\log\frac{T}{\delta}.
$
Let \(x\in\R^d\) satisfy
$
f(x)-f_*\le 4\bar\Delta_0.
$
Suppose that \(M\ge \lambda\), and choose
$
\tau
:=
8S_\mu
\left(\frac{M}{\lambda}\right)^{1/p}.
$
For \(\ell=1,\dots,M\), let
\[
Y_\ell:=
\frac{F(x+\mu u_\ell;\xi_\ell)-F(x-\mu u_\ell;\xi_\ell)}{2\mu},
\qquad
g(x):=
\frac{d}{M}\sum_{\ell=1}^M \psi_\tau(Y_\ell)u_\ell .
\]
Then, with probability at least \(1-\delta/T\),
\[
\norm{g(x)-\nabla f_\mu(x)}_2
\le
\left(
120+\frac{2^{4-p}}{p-1}
\right)
dS_\mu
\left(\frac{\log(T/\delta)}{M}\right)^{\frac{p-1}{p}}
\sqrt{1+\log\left(\frac{M}{\log(T/\delta)}\right)}.
\]
\end{corollary}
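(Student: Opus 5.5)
The plan is to instantiate Corollary~\ref{cor:hp-shared-localized} with the prescribed threshold \(\tau=8S_\mu(M/\lambda)^{1/p}\) and bound each of its three terms by a constant multiple of \(dS_\mu(\lambda/M)^{(p-1)/p}\sqrt{1+\log(M/\lambda)}\). First, the corollary applies: since \(M\ge\lambda\), we have \(\tau\ge 8S_\mu\ge 4S_\mu\). Also \(T\ge 3\) and \(\delta\le 1\) give \(\lambda\ge\log 3>1\), so \(1\le \lambda\), which lets us absorb additive constants into \(\sqrt\lambda\) or \(\lambda\) where needed. Throughout, write \(r:=\lambda/M\in(0,1]\), so \(\tau=8S_\mu r^{-1/p}\) and \(\log(\tau/S_\mu)=\log 8+\tfrac1p\log(1/r)\le \log 8+\log(M/\lambda)\).

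\emph{Fluctuation term.} We have \(S_\mu^{p/2}\tau^{1-p/2}=8^{1-p/2}S_\mu r^{-(2-p)/(2p)}\), so
\[
\frac{8d}{\sqrt M}S_\mu^{p/2}\tau^{1-\frac p2}=8^{2-\frac p2}dS_\mu\,\lambda^{-1/2}r^{\frac12-\frac{2-p}{2p}}=8^{2-\frac p2}dS_\mu\,\lambda^{-1/2}r^{\frac{p-1}{p}}.
\]
Since \(\lambda\ge1\), we get \((1+\sqrt{2\lambda})\lambda^{-1/2}\le 1+\sqrt2\). Next, \(\sqrt{1+\log 8+\log(M/\lambda)}\le\sqrt{1+\log 8}\,\sqrt{1+\log(M/\lambda)}\le 1.8\sqrt{1+\log(M/\lambda)}\). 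Using \(8^{2-p/2}\le 8^{3/2}\approx 22.6\), this term is at most roughly \(22.6\cdot2.42\cdot1.8\approx 98\) times the target, which is below \(120\) minus room for the next term. For \(p\) near \(2\) the constant is much smaller, so the worst case is \(p\downarrow1\); I would double-check this arithmetic carefully.

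\emph{Linear term.} We have \(\tfrac{8d\tau}{3M}\lambda=\tfrac{64}{3}dS_\mu r^{1-1/p}=\tfrac{64}{3}dS_\mu r^{(p-1)/p}\approx 21.3\,dS_\mu r^{(p-1)/p}\). Because \(\sqrt{1+\log(M/\lambda)}\ge1\), this fits in the target. Combined with the previous term it gives \(\lesssim 120\); if the crude bound overshoots, I would sharpen it by using \(\sqrt{1+\log 8}\) only with \(8^{2-p/2}\), or use \(\lambda\ge\log3\) more carefully.

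\emph{Bias term.} We have
\[
\frac{2^{2p+1}}{p-1}dS_\mu^p\tau^{1-p}=\frac{2^{2p+1}8^{1-p}}{p-1}dS_\mu r^{\frac{p-1}{p}}=\frac{2^{4-p}}{p-1}dS_\mu r^{\frac{p-1}{p}},
\]
which gives exactly the second constant after again using \(\sqrt{1+\log(M/\lambda)}\ge1\). Summing the three bounds yields \(C_p\) with probability \(1-\delta/T\).

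The main obstacle is purely the numerical bookkeeping of the first two constants so that they fit under \(120\) uniformly in \(p\in(1,2]\). The exponent matching is automatic, as shown above.
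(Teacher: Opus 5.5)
Your proposal is correct and follows the paper's proof essentially line by line. Like the paper, you instantiate Corollary~\ref{cor:hp-shared-localized}, use $\lambda\ge\log 3>1$ to absorb $1+\sqrt{2\lambda}$ into $(1+\sqrt2)\sqrt\lambda$, bound the log factor via $(1+\log 8)(1+\log(M/\lambda))$, and obtain the constants $\approx 98$, $64/3\le 22$, and exactly $2^{4-p}/(p-1)$. Your hedged arithmetic does close: even with your rounding, $8^{3/2}(1+\sqrt2)\cdot 1.8+64/3\approx 119.7<120$, and using $\sqrt{1+\log 8}\approx 1.755$ the first constant is about $96$.
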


We now turn the localized deviation bound of
Corollary~\ref{cor:hp-shared-localized-tuned} into a descent estimate for
the smoothed objective \(f_\mu\). Since the update uses the clipped estimator
\(g(x_t)\) in place of \(\nabla f_\mu(x_t)\), the key step is to combine the
smoothness inequality for \(f_\mu\) with the high-probability control of
\(g(x_t)-\nabla f_\mu(x_t)\). The next proposition gives a deterministic
one-step descent inequality.

\begin{proposition}[One-step descent inequality]
\label{prop:hp-shared-onestep}
Assume that \(f\) is \(L\)-smooth. Let \(x\in\R^d\), let
\(x^+:=x-\alpha g(x)\), and define
$
e(x):=g(x)-\nabla f_\mu(x).
$
If \(\alpha\le \frac{1}{4L}\), then
\[
f_\mu(x^+)
\le
f_\mu(x)
-\frac{\alpha}{2}\norm{\nabla f_\mu(x)}_2^2
+\frac{5\alpha}{4}\norm{e(x)}_2^2.
\]
\end{proposition}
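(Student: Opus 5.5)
The plan is to use the standard descent lemma for the smoothed objective, expand the update, and absorb the cross term with Young's inequality. First I need that $f_\mu$ is $L$-smooth. This follows because $\nabla f_\mu(x)=\E_v[\nabla f(x+\mu v)]$ for $v\sim\Unif(\Ball)$, which is justified by dominated convergence since $\nabla f$ is continuous. Hence
\[
\norm{\nabla f_\mu(x)-\nabla f_\mu(y)}_2\le \E_v\norm{\nabla f(x+\mu v)-\nabla f(y+\mu v)}_2\le L\norm{x-y}_2 .
\]
The quadratic upper bound then gives
\[
f_\mu(x^+)\le f_\mu(x)-\alpha\ip{\nabla f_\mu(x)}{g(x)}+\frac{L\alpha^2}{2}\norm{g(x)}_2^2 .
\]

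Next, write $g(x)=\nabla f_\mu(x)+e(x)$ and abbreviate $\nabla:=\nabla f_\mu(x)$, $e:=e(x)$. The inner-product term becomes $-\alpha\norm{\nabla}_2^2-\alpha\ip{\nabla}{e}$. By Young's inequality, $-\alpha\ip{\nabla}{e}\le \frac{\alpha}{4}\norm{\nabla}_2^2+\alpha\norm{e}_2^2$. For the quadratic term I use $\norm{g}_2^2\le 2\norm{\nabla}_2^2+2\norm{e}_2^2$ together with $L\alpha\le 1/4$, so that
\[
\frac{L\alpha^2}{2}\norm{g}_2^2\le L\alpha^2(\norm{\nabla}_2^2+\norm{e}_2^2)\le \frac{\alpha}{4}\norm{\nabla}_2^2+\frac{\alpha}{4}\norm{e}_2^2 .
\]

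Summing these pieces, the coefficient of $\norm{\nabla}_2^2$ is $-\alpha+\frac{\alpha}{4}+\frac{\alpha}{4}=-\frac{\alpha}{2}$. The coefficient of $\norm{e}_2^2$ is $\alpha+\frac{\alpha}{4}=\frac{5\alpha}{4}$. This is exactly the claimed inequality. The argument is deterministic and holds pointwise for any realization of $g(x)$.

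The only mildly delicate step is the smoothness of $f_\mu$, specifically interchanging the gradient with the ball expectation. Since $f$ is $C^1$ with Lipschitz gradient, $\nabla f$ is bounded on compact sets, so dominated convergence applies, as in Remark~\ref{rem:f-smooth}. The remaining steps are routine bookkeeping of constants.
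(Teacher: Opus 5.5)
Your proof is correct and follows the paper's argument: descent lemma for $f_\mu$, the split $g(x)=\nabla f_\mu(x)+e(x)$, the same Young bound $\abs{\ip{\nabla f_\mu(x)}{e(x)}}\le \frac14\norm{\nabla f_\mu(x)}_2^2+\norm{e(x)}_2^2$ and $\norm{g(x)}_2^2\le 2\norm{\nabla f_\mu(x)}_2^2+2\norm{e(x)}_2^2$, with $L\alpha\le\frac14$ applied at the end instead of via the paper's intermediate coefficients $\frac34-L\alpha$ and $1+L\alpha$. Your dominated-convergence justification that $f_\mu$ is $L$-smooth is a small addition, since the paper simply asserts that fact.
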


For \(M\in\mathbb N\), \(\mu>0\), \(\delta\in(0,1)\), and \(T\ge 3\), define
$
C_p:=120+\frac{2^{4-p}}{p-1},
$
\[
\eta_0(M,\mu,\delta,T)
:=
C_p\,dS_\mu
\left(\frac{\log(T/\delta)}{M}\right)^{\frac{p-1}{p}}
\sqrt{1+\log\!\left(\frac{M}{\log(T/\delta)}\right)},
\]
and
$
\eta(M,\mu,\delta,T):=\eta_0(M,\mu,\delta,T)+L\mu.
$

\begin{theorem}[High-probability convergence of base RSC-ZO]
\label{thm:hp-shared-clipped}
Assume that Assumptions~\ref{ass:Well-defined}, \ref{ass:sample-smooth}, and
\ref{ass:hp-shared-prob} hold. Let \(\mu>0\),
$
\Delta_0:=f(x_0)-f_*
$
and
$
\bar\Delta_0:=\Delta_0+\frac{L\mu^2}{2}.
$
Let \(\epsilon>0\), \(\delta\in(0,1]\), and \(M\in\mathbb N\), and define
$
T:=\max\left\{3,\left\lceil
\frac{32L\bar\Delta_0}{\epsilon^2}
\right\rceil\right\}.
$
Let \(\{x_t\}_{t\ge 0}\) be generated by Algorithm~\ref{alg:rsc-zo-unified}
with \(\beta=0\), batch size \(M\), stepsize \(\alpha=1/(4L)\), and threshold
$
\tau:=
8S_\mu
\Bigl(\frac{M}{\log(T/\delta)}\Bigr)^{1/p}.$
Suppose that
\[
M\ge \log\frac{T}{\delta},
\qquad
\eta(M,\mu,\delta,T)\le \frac{\epsilon}{4},
\qquad
\epsilon^2\le 32L\bar\Delta_0.
\]
Then, with probability at least \(1-\delta\),
\[
\frac1T\sum_{t=0}^{T-1}\norm{\nabla f(x_t)}_2^2
\le \epsilon^2.
\]
In particular, on the same event,
\[
\min_{0\le t<T}\norm{\nabla f(x_t)}_2\le \epsilon.
\]
\end{theorem}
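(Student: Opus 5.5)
The plan is to combine the one-step descent inequality (Proposition~\ref{prop:hp-shared-onestep}) with the tuned localized deviation bound (Corollary~\ref{cor:hp-shared-localized-tuned}), using a localization-by-induction argument so that the sublevel condition $f(x_t)-f_*\le 4\bar\Delta_0$ holds at every iterate where the deviation bound is invoked. First, record the standard smoothing facts: $f_\mu$ is $L$-smooth, $|f_\mu-f|\le L\mu^2/2$, and $\|\nabla f_\mu-\nabla f\|_2\le L\mu$. In particular $f_\mu(x_0)-\inf f_\mu\le \Delta_0+L\mu^2/2=\bar\Delta_0$, using $\inf f_\mu\ge f_*$ (since $f_\mu$ averages $f\ge f_*$).

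For $t=0,\dots,T-1$, let $E_t$ be the event that the tuned bound $\|g(x_t)-\nabla f_\mu(x_t)\|_2\le \eta_0$ holds, \emph{provided} $f(x_t)-f_*\le 4\bar\Delta_0$. The subtlety is that $x_t$ is random. Condition on $\mathcal F_t$, the sigma-field generated by the samples up to step $t$. Then $x_t$ is fixed, and the fresh batch $(u_{t,\ell},\xi_{t,\ell})$ is independent of it. On the $\mathcal F_t$-measurable event $\{f(x_t)-f_*\le 4\bar\Delta_0\}$, Corollary~\ref{cor:hp-shared-localized-tuned} therefore gives conditional failure probability at most $\delta/T$. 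Hence $\Pb(E_t^c)\le\delta/T$, and a union bound gives $\Pb(\bigcap_t E_t)\ge 1-\delta$.

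On $\bigcap_t E_t$, prove by induction on $t$ that $f_\mu(x_t)-f_*\le \bar\Delta_0+(t)\cdot\frac{5\alpha}{4}\eta_0^2$ and that $f(x_t)-f_*\le 4\bar\Delta_0$.
\begin{itemize}
\item Given the sublevel condition at $x_t$, the event $E_t$ yields $\|e(x_t)\|_2\le\eta_0$.
\item Proposition~\ref{prop:hp-shared-onestep} with $\alpha=1/(4L)$ then gives $f_\mu(x_{t+1})\le f_\mu(x_t)-\frac{\alpha}{2}\|\nabla f_\mu(x_t)\|^2+\frac{5\alpha}{4}\eta_0^2$.
\item Summing, $f_\mu(x_{t+1})-f_*\le \bar\Delta_0+T\cdot\frac{5}{16L}\eta_0^2$.
\item Since $\eta_0\le\epsilon/4$ and $T\le 32L\bar\Delta_0/\epsilon^2+1\le 64L\bar\Delta_0/\epsilon^2$ (using $\epsilon^2\le 32L\bar\Delta_0$), the accumulated error is at most $\frac{5}{16L}\cdot\frac{\epsilon^2}{16}\cdot\frac{64L\bar\Delta_0}{\epsilon^2}=\frac{5}{4}\bar\Delta_0$.
\item Then $f(x_{t+1})-f_*\le f_\mu(x_{t+1})-f_*+L\mu^2/2\le \bar\Delta_0+\frac54\bar\Delta_0+\bar\Delta_0<4\bar\Delta_0$, which closes the induction.
\end{itemize}
The main obstacle is exactly this bookkeeping: keeping the constants tight enough that the drift stays below $4\bar\Delta_0$, while handling the measurability of the localization condition correctly.

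Finally, telescope the descent inequalities over the whole run:
\[
\frac{\alpha}{2}\sum_{t<T}\|\nabla f_\mu(x_t)\|_2^2\le \bar\Delta_0+\frac{5\alpha T}{4}\eta_0^2,
\]
so that
\[
\frac1T\sum_{t<T}\|\nabla f_\mu(x_t)\|_2^2\le \frac{8L\bar\Delta_0}{T}+\frac52\eta_0^2\le \frac{\epsilon^2}{4}+\frac{5\epsilon^2}{32}.
\]
Using $\|\nabla f(x_t)\|^2\le 2\|\nabla f_\mu(x_t)\|^2+2L^2\mu^2$ and $L\mu\le\eta\le\epsilon/4$, the average of $\|\nabla f(x_t)\|_2^2$ is at most $2(\frac{13}{32}\epsilon^2)+\frac{\epsilon^2}{8}<\epsilon^2$. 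If that final combination is slightly loose, I would instead apply the same inequality as $(a+b)^2\le(1+c)a^2+(1+1/c)b^2$ with a tuned $c$, or use $\eta=\eta_0+L\mu\le\epsilon/4$ jointly to absorb both error terms. The min bound then follows because the minimum of $\|\nabla f(x_t)\|_2^2$ is at most its average.
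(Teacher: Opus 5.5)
Your argument is essentially the paper's proof. It conditions on the past so that Corollary~\ref{cor:hp-shared-localized-tuned} applies at the frozen point $x_t$ on the localization event, takes a union bound over the $T$ steps, proves by induction that the iterates stay in $\{f-f_*\le 4\bar\Delta_0\}$, and then telescopes Proposition~\ref{prop:hp-shared-onestep}.

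\textbf{One step fails as written.} Your bound $T\le 32L\bar\Delta_0/\epsilon^2+1\le 64L\bar\Delta_0/\epsilon^2$ ignores the $\max\{3,\cdot\}$ in the definition of $T$. With $a:=32L\bar\Delta_0/\epsilon^2\in[1,3/2)$ one has $T=3>2a$. The correct estimate is $T\le 3a=96L\bar\Delta_0/\epsilon^2$, which is valid since $a\ge 1$. This raises the accumulated error to $\frac{15}{8}\bar\Delta_0$ and gives $f(x_{t+1})-f_*\le\frac{31}{8}\bar\Delta_0<4\bar\Delta_0$. So the induction still closes, exactly as in the paper.

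\textbf{The final combination is fine.} Your constants give $\frac{15}{16}\epsilon^2<\epsilon^2$, so no fallback is needed. The paper gets $\frac{13}{16}\epsilon^2$ by bounding $5\eta_0^2+2L^2\mu^2\le 5\eta^2$ jointly.
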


\begin{remark}[Complexity implied by Theorem~\ref{thm:hp-shared-clipped}]
\label{rem:hp-shared-complexity}
The total number of stochastic function queries is \(2MT\). Choose
\(\mu\asymp \epsilon/(Ld)\). Then \(dL\mu=O(\epsilon)\),
\(L\mu=O(\epsilon/d)\), and
\[
\bar\Delta_0
=
\Delta_0
+
O\!\left(\frac{\epsilon^2}{Ld^2}\right).
\]
Since
\[
T
=
\max\left\{
3,\left\lceil \frac{32L\bar\Delta_0}{\epsilon^2}\right\rceil
\right\},
\]
we have
$
T=O(1+L\Delta_0/\epsilon^2).
$
When \(L\Delta_0/\epsilon^2\gtrsim 1\), this simplifies to
$
T=O(L\Delta_0/\epsilon^2).
$

Up to logarithmic factors, the condition
$
\eta(M,\mu,\delta,T)\le \epsilon/4
$
reduces to
\[
dS_\mu
\left(\frac{\log(T/\delta)}{M}\right)^{\frac{p-1}{p}}
\lesssim
\epsilon.
\]
Moreover, under the above choice of \(\mu\),
\[
dS_\mu
=
\sqrt d\,(\sqrt{L\bar\Delta_0}+\sigma)+dL\mu
=
\sqrt d\,(\sqrt{L\Delta_0}+\sigma)+O(\epsilon).
\]
Therefore it is enough to choose
\[
M
\gtrsim
\log(T/\delta)
\left(
\frac{\sqrt d\,(\sqrt{L\Delta_0}+\sigma)}{\epsilon}
\right)^{\frac{p}{p-1}},
\]
up to logarithmic factors. Combining this with
\(T=O(L\Delta_0/\epsilon^2)\) gives
\[
MT
=
\wtO\!\left(
L\Delta_0
\left(
\sqrt{L\Delta_0}+\sigma
\right)^{\frac{p}{p-1}}
\frac{d^{\frac{p}{2(p-1)}}}
{\epsilon^{\frac{3p-2}{p-1}}}
\right).
\]
Thus the total number of stochastic function queries, \(2MT\), has the same
order up to the leading factor \(2\). In particular, at \(p=2\),
\[
MT
=
\wtO\!\left(
L\Delta_0
(\sqrt{L\Delta_0}+\sigma)^2
d\,\epsilon^{-4}
\right),
\]
so the method has the classical \(d\,\epsilon^{-4}\) dimension--accuracy
dependence up to logarithmic and problem-dependent factors.
\end{remark}

\begin{remark}[Trivial regime]
\label{rem:hp-shared-trivial}
The condition \(\epsilon^2\le 32L\bar\Delta_0\) isolates the regime where the
iteration bound above is needed. If \(\epsilon^2>32L\bar\Delta_0\), then
Lemma~\ref{lem:hp-shared-grad-subopt} gives
\[
\|\nabla f(x_0)\|_2^2
\le
2L(f(x_0)-f_*)
=
2L\Delta_0
\le
2L\bar\Delta_0
<
\epsilon^2.
\]
Hence the desired stationarity conclusion already holds at the initial point.
\end{remark}

The proofs of the lemmas, corollaries, and theorem in this section are collected
in Appendix~\ref{proofo}.
\subsection{Proofs}
\label{proofo}

\begin{lemma}[Gradient bound from suboptimality]
\label{lem:hp-shared-grad-subopt}
Assume that $f$ is $L$-smooth and bounded below by $f_*$. Then, for every $x\in\R^d$,
\[
\norm{\nabla f(x)}_2^2
\le
2L\bigl(f(x)-f_*\bigr).
\]
\end{lemma}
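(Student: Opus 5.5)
The argument is the standard descent-lemma trick: move from $x$ along $-\nabla f(x)/L$, use the quadratic upper bound implied by $L$-smoothness, and compare with the global infimum $f_*$. Assumption~\ref{ass:Well-defined} and the $L$-smoothness of $f$ (Remark~\ref{rem:f-smooth}) are all that is needed.

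\textbf{Step 1: quadratic upper bound.} For any $x,y\in\R^d$, $L$-smoothness gives
\[
f(y)\le f(x)+\ip{\nabla f(x)}{y-x}+\frac L2\norm{y-x}_2^2 .
\]
I will justify this by writing $f(y)-f(x)=\int_0^1\ip{\nabla f(x+s(y-x))}{y-x}\,ds$ and applying Cauchy--Schwarz together with the Lipschitz bound on $\nabla f$.

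\textbf{Step 2: choose the gradient step.} Take $y:=x-\frac1L\nabla f(x)$. Then
\[
\ip{\nabla f(x)}{y-x}=-\frac1L\norm{\nabla f(x)}_2^2,
\qquad
\frac L2\norm{y-x}_2^2=\frac1{2L}\norm{\nabla f(x)}_2^2,
\]
so Step~1 yields
\[
f(y)\le f(x)-\frac1{2L}\norm{\nabla f(x)}_2^2 .
\]

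\textbf{Step 3: use the lower bound.} Since $f_*\le f(y)$, we get $f_*\le f(x)-\frac{1}{2L}\norm{\nabla f(x)}_2^2$. Rearranging gives $\norm{\nabla f(x)}_2^2\le 2L\bigl(f(x)-f_*\bigr)$.

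None of these steps is a real obstacle. The only point to check is that Step~3 uses $f_*>-\infty$ (Assumption~\ref{ass:Well-defined}); the point $y$ itself need not be a minimizer.
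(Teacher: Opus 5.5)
Your proposal is correct and follows the paper's proof: both apply the quadratic upper bound from $L$-smoothness at $y=x-\frac{1}{L}\nabla f(x)$ and then use $f_*\le f(y)$ to rearrange. The only addition on your side is the integral justification of the descent lemma, which the paper takes as given.
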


\begin{proof}
For any $x\in\R^d$, $L$-smoothness gives
\[
f\Bigl(x-\frac1L\nabla f(x)\Bigr)
\le
f(x)-\frac{1}{2L}\norm{\nabla f(x)}_2^2.
\]
Since $f_*\le f(y)$ for every $y$, we obtain
\[
f_*
\le
f\Bigl(x-\frac1L\nabla f(x)\Bigr)
\le
f(x)-\frac{1}{2L}\norm{\nabla f(x)}_2^2,
\]
which rearranges to the claim.
\end{proof}

\begin{theorem}[{\cite[Theorem~5]{pmlr-v258-mirzaei25a}}]\label{Berst-thm}
Suppose that the $X_i$ are $m$ independent mean-zero random variables with values in a Hilbert space $H$, satisfying $\|X_i\| \le c$. Then for $\delta \in (0,1)$,
\[
\Pr\left\{
\left\|\sum_i X_i\right\|
>
\sqrt{\sum_i \mathbb{E}\|X_i\|^2}
\left(1+\sqrt{2\ln(1/\delta)}\right)
+
\frac{4c}{3}\ln(1/\delta)
\right\}
< \delta .
\]
\end{theorem}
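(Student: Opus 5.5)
The plan is to split $\|\sum_i X_i\|$ into its mean plus a deviation. We bound the mean by a second-moment computation and the deviation by a martingale (Freedman/Bernstein) inequality for the Doob martingale of the norm. Write $S:=\sum_i X_i$, $Z:=\|S\|$, and $V:=\sum_i\E\|X_i\|^2$.

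\emph{Step 1 (mean).} By Jensen, $\E Z\le\sqrt{\E\|S\|^2}$. Expanding the squared norm in $H$, the cross terms $\E\langle X_i,X_j\rangle$ for $i\neq j$ vanish by independence and zero mean. Hence $\E\|S\|^2=V$, and so $\E Z\le\sqrt V$.

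\emph{Step 2 (Doob martingale).} Let $\mathcal F_i:=\sigma(X_1,\dots,X_i)$ and $D_i:=\E[Z\mid\mathcal F_i]-\E[Z\mid\mathcal F_{i-1}]$, so that $Z-\E Z=\sum_i D_i$. Let $Z^{(i)}:=\|S-X_i\|$, which does not depend on $X_i$. By independence, $\E[Z^{(i)}\mid\mathcal F_i]=\E[Z^{(i)}\mid\mathcal F_{i-1}]$. Therefore
\[
D_i=\E[Z-Z^{(i)}\mid\mathcal F_i]-\E[Z-Z^{(i)}\mid\mathcal F_{i-1}].
\]
The triangle inequality gives $|Z-Z^{(i)}|\le\|X_i\|\le c$. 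From this we read off two bounds. First, $|D_i|\le 2c$. Second, since $D_i$ is the centering of $W_i:=\E[Z-Z^{(i)}\mid\mathcal F_i]$ given $\mathcal F_{i-1}$, we have $\E[D_i^2\mid\mathcal F_{i-1}]\le\E[W_i^2\mid\mathcal F_{i-1}]\le\E\|X_i\|^2$. The last step uses $W_i^2\le\E[\|X_i\|^2\mid\mathcal F_i]$ and the independence of $X_i$ from $\mathcal F_{i-1}$. The predictable quadratic variation is therefore deterministically at most $V$.

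\emph{Step 3 (Freedman).} Freedman's inequality, with increments bounded by $b=2c$ and total conditional variance at most $V$, gives
\[
\Pr\Bigl(\sum_i D_i>t\Bigr)\le\exp\!\Bigl(-\frac{t^2}{2(V+bt/3)}\Bigr).
\]
Setting $t=\sqrt{2V\ln(1/\delta)}+\tfrac{2b}{3}\ln(1/\delta)=\sqrt{2V\ln(1/\delta)}+\tfrac{4c}{3}\ln(1/\delta)$ makes the right-hand side at most $\delta$. This is the usual inversion of the Bernstein exponent: $t^2\ge 2\ln(1/\delta)(V+bt/3)$ for this $t$. To obtain the strict inequality ``$<\delta$'' as in the statement, either use the strict version of Freedman's bound or note the slack in the inversion.

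Combining with Step 1, $Z\le\E Z+\sum_i D_i\le\sqrt V\bigl(1+\sqrt{2\ln(1/\delta)}\bigr)+\tfrac{4c}{3}\ln(1/\delta)$ except on an event of probability less than $\delta$.

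The main obstacle is Step 2: obtaining the \emph{conditional} variance bound $\E\|X_i\|^2$, rather than the cruder $c^2$ or $4\E\|X_i\|^2$. This requires the replacement trick with $Z^{(i)}$ and the observation that centering a conditional expectation can only reduce its second moment. The increment bound $2c$ (not $c$) is exactly what produces the constant $4c/3$. Measurability and integrability in a general separable Hilbert space are routine, since all quantities are bounded.
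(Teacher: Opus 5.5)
Your argument is correct. The paper gives no proof of this statement. It quotes the result from the cited reference and uses it only through Lemma~\ref{lem:hp-shared-vector-bernstein}, so your proposal is a self-contained replacement for that citation rather than an alternative to an argument in the paper.

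The three ingredients all check out:
\begin{enumerate}
\item $\E\|S\|\le\sqrt V$ follows from Jensen and the orthogonality of the summands.
\item The replacement $Z^{(i)}=\|S-X_i\|$, together with independence, gives $D_i\le 2c$ and $\E[D_i^2\mid\mathcal F_{i-1}]\le\E\|X_i\|^2$ deterministically.
\item Inverting the Bernstein form of Freedman's inequality at $t=\sqrt{2V\ln(1/\delta)}+\frac{4c}{3}\ln(1/\delta)$ reproduces exactly the stated constants.
\end{enumerate}

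On the strict inequality: the slack in the inversion is strict only when $V>0$, which forces $c>0$. When $V=0$ you have $S=0$ almost surely and the event is empty, so split that case off explicitly.

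What your route buys over the bare citation is transparency. The additive $\sqrt V$ is the bound on the mean of the norm. The $\sqrt{2V\ln(1/\delta)}$ term is the deviation of the norm, with a variance proxy that is not inflated. The $4c/3$ comes from the increment bound $2c$.

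Its limitation is that the step $\E[Z^{(i)}\mid\mathcal F_i]=\E[Z^{(i)}\mid\mathcal F_{i-1}]$ uses independence of the summands in an essential way. That suffices for how the paper uses this result: fresh i.i.d.\ samples at a point that is fixed conditionally on the past. However, the same argument would not yield the martingale-difference version, Lemma~\ref{lem:momentum-freedman}, which the momentum analysis invokes without proof. For that, one would instead use a Pinelis-type argument based on the smoothness of the Hilbert norm.
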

The following lemma is the finite-dimensional form used in our analysis.

\begin{lemma}[Vector Bernstein inequality with variance proxy]
\label{lem:hp-shared-vector-bernstein}
Let $X_1,\dots,X_M$ be independent mean-zero random vectors in $\mathbb{R}^d$.
Assume that
\[
\norm{X_\ell}_2\le R
\quad\text{almost surely for all }\ell,
\]
and
\[
\sum_{\ell=1}^M \mathbb{E}\norm{X_\ell}_2^2 \le V.
\]
Then, for every $\delta\in(0,1)$, with probability at least $1-\delta$,
\[
\left\|\sum_{\ell=1}^M X_\ell\right\|_2
\le
\sqrt{V}\left(1+\sqrt{2\log(1/\delta)}\right)
+
\frac{4R}{3}\log(1/\delta).
\]
\end{lemma}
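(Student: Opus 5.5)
The plan is to obtain this lemma as a direct specialization of the Hilbert-space Bernstein inequality of Theorem~\ref{Berst-thm}, applied with $H=\R^d$ under the Euclidean inner product, $m=M$, and $c=R$. Theorem~\ref{Berst-thm} asks only that the $X_\ell$ be independent, mean-zero, and almost surely bounded by $c$, and all three are hypotheses of the lemma. Its conclusion says that the event
\[
\Bigl\|\sum_\ell X_\ell\Bigr\|_2>\sqrt{\textstyle\sum_\ell \E\|X_\ell\|_2^2}\,\bigl(1+\sqrt{2\log(1/\delta)}\bigr)+\tfrac{4R}{3}\log(1/\delta)
\]
has probability strictly less than $\delta$.

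The second step replaces the exact variance by the proxy $V$. The assumption $\sum_\ell\E\|X_\ell\|_2^2\le V$ and the monotonicity of the square root give $\sqrt{\sum_\ell\E\|X_\ell\|_2^2}\le\sqrt V$. The factor $1+\sqrt{2\log(1/\delta)}$ is positive, so the threshold built from $V$ is at least the threshold in Theorem~\ref{Berst-thm}. Consequently the event that $\|\sum_\ell X_\ell\|_2$ exceeds the $V$-threshold is contained in the event that it exceeds the original threshold, and so it also has probability less than $\delta$.

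Taking complements, with probability at least $1-\delta$ the norm of the sum is at most $\sqrt V\,(1+\sqrt{2\log(1/\delta)})+\frac{4R}{3}\log(1/\delta)$, which is the claim. The only bookkeeping points are to identify $\ln$ with $\log$ (natural logarithm throughout) and to note that a strict inequality ``$<\delta$'' implies ``$\le\delta$''.

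There is no real obstacle. The one point to check is that the inequality from \cite{pmlr-v258-mirzaei25a} is stated for general separable Hilbert spaces with norm-bounded summands, so it applies to $\R^d$ without modification. No measurability issues arise, because the summands take values in $\R^d$.
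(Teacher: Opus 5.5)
Your proposal is correct and follows the paper's proof: both apply Theorem~\ref{Berst-thm} with $H=\mathbb{R}^d$, $m=M$, $c=R$, and then replace $\sqrt{\sum_\ell \mathbb{E}\|X_\ell\|_2^2}$ by the larger $\sqrt{V}$, which only enlarges the threshold. Your extra bookkeeping (that the strict bound ``$<\delta$'' gives ``$\le\delta$'', and that $\ln=\log$) is harmless and makes explicit what the paper leaves implicit.
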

\begin{proof}
We can apply Theorem~\ref{Berst-thm} with
\(H=\mathbb{R}^d\), \(m=M\), and \(c=R\). The theorem gives
\[
\left\|\sum_{\ell=1}^M X_\ell\right\|_2
\le
\sqrt{\sum_{\ell=1}^M \mathbb{E}\norm{X_\ell}_2^2}
\left(1+\sqrt{2\log(1/\delta)}\right)
+
\frac{4R}{3}\log(1/\delta)
\]
with probability at least \(1-\delta\). Since
$
\sum_{\ell=1}^M \mathbb{E}\norm{X_\ell}_2^2 \le V,$
the result follows.
\end{proof}

\begin{lemma}[Smoothing bias]
\label{lem:hp-shared-smoothing-bias}
Assume that \(f\) is \(L\)-smooth. Then, for every \(x\in\R^d\) and every \(\mu>0\),
\[
\abs{f_\mu(x)-f(x)}\le \frac{L\mu^2}{2}
\quad \text{and} \quad
\norm{\nabla f_\mu(x)-\nabla f(x)}_2\le L\mu.
\]
\end{lemma}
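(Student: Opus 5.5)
We need to prove the smoothing-bias lemma: $|f_\mu(x)-f(x)|\le L\mu^2/2$ and $\|\nabla f_\mu(x)-\nabla f(x)\|_2\le L\mu$ for $L$-smooth $f$ and the ball smoothing $f_\mu(x)=\E_{v\sim\Unif(\Ball)}[f(x+\mu v)]$.

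For the value bound, we start from the descent-lemma two-sided inequality for an $L$-smooth function,
\[
\abs{f(x+\mu v)-f(x)-\mu\ip{\nabla f(x)}{v}}\le \frac{L\mu^2}{2}\norm{v}_2^2 .
\]
We then take expectation over $v\sim\Unif(\Ball)$. The distribution of $v$ is symmetric, so $\E[v]=0$ and the linear term vanishes. Moving the absolute value outside the expectation (Jensen) gives
\[
\abs{f_\mu(x)-f(x)}\le \frac{L\mu^2}{2}\,\E\norm{v}_2^2\le \frac{L\mu^2}{2},
\]
since $\norm{v}_2\le 1$ on the ball. One could sharpen this using $\E\norm{v}_2^2=d/(d+2)$, but it is not needed.

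For the gradient bound, we first justify differentiating under the expectation: $\nabla f_\mu(x)=\E_v[\nabla f(x+\mu v)]$. This is legitimate because $\nabla f$ is Lipschitz, hence continuous and bounded on compact sets, so dominated convergence applies as in Remark~\ref{rem:f-smooth}. Then
\[
\norm{\nabla f_\mu(x)-\nabla f(x)}_2=\norm{\E_v[\nabla f(x+\mu v)-\nabla f(x)]}_2\le \E_v\,L\mu\norm{v}_2\le L\mu,
\]
by Jensen's inequality for the norm and $L$-smoothness.

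The only point requiring care is the interchange of gradient and expectation. Boundedness of the ball makes it routine: the difference quotients are uniformly dominated by $\sup_{\norm{y-x}\le \mu+1}\norm{\nabla f(y)}_2<\infty$. An alternative route is to note that the paper's spherical formula $\nabla f_\mu(x)=\frac{d}{\mu}\E_u[f(x+\mu u)u]$ equals $\E_v[\nabla f(x+\mu v)]$ by the divergence theorem. The direct interchange is simpler, so we use it.
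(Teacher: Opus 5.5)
Your proposal is correct and follows essentially the same argument as the paper: the two-sided smoothness bound averaged over the ball with $\E[v]=0$ for the value estimate, then $\nabla f_\mu(x)=\E_v[\nabla f(x+\mu v)]$ combined with Jensen and the Lipschitz gradient for the gradient estimate. Your explicit dominated-convergence justification of the gradient--expectation interchange is a welcome addition, since the paper simply asserts that interchange.
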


\begin{proof}
By \(L\)-smoothness, for every \(v\in\mathbb B^d\),
\[
\left|
f(x+\mu v)-f(x)-\langle \nabla f(x),\mu v\rangle
\right|
\le \frac{L\mu^2}{2}\|v\|^2
\le \frac{L\mu^2}{2}.
\]
Taking expectation over \(v\sim\mathrm{Unif}(\mathbb B^d)\) and using
\(\mathbb E[v]=0\), we obtain
\[
|f_\mu(x)-f(x)|\le \frac{L\mu^2}{2}.
\]

Next, since \(f\) is \(L\)-smooth,
\[
\nabla f_\mu(x)=\E_{v\sim\Unif(\Ball)}[\nabla f(x+\mu v)].
\]
Therefore,
\[
\norm{\nabla f_\mu(x)-\nabla f(x)}_2
\le
\E_{v\sim\Unif(\Ball)}
\norm{\nabla f(x+\mu v)-\nabla f(x)}_2
\le
L\mu\,\E\norm{v}_2
\le L\mu,
\]
since \(\norm{v}_2\le 1\) almost surely for \(v\in\Ball\).
\end{proof}

\begin{proof}[Proof of Lemma~\ref{lem:hp-shared-clipped-bias-abstract}]
We have
$
\psi_\tau(z)=\operatorname{sgn}(z)\min\{|z|,\tau\}.$
Therefore
\[
|Z-\psi_\tau(Z)|
=
\bigl(|Z|-\tau\bigr)_+,
\]
where \(a_+:=\max\{a,0\}\).

For any nonnegative random variable \(X\) and any \(a>0\),
\[
\E[(X-a)_+]= \E\bigg[\int_{a}^\infty 1_{\{X>t\}} \,dt\bigg]=\int_a^\infty \Pb(X>t)\,dt.
\]
Applying this with \(X=|Z|\) and \(a=\tau\), we obtain
\[
\E\bigl[|Z-\psi_\tau(Z)|\bigr]
=
\int_\tau^\infty \Pb(|Z|>t)\,dt.
\]
Since \(\tau\ge 4S\), the assumed tail bound is valid for every \(t\ge \tau\), so
\[
\E\bigl[|Z-\psi_\tau(Z)|\bigr]
\le
2^{2p+1}S^p\int_\tau^\infty t^{-p}\,dt.
\]
Since \(p>1\), we have
$
\int_\tau^\infty t^{-p}\,dt
=
\frac{\tau^{1-p}}{p-1}.$
Hence
\[
\E\bigl[|Z-\psi_\tau(Z)|\bigr]
\le
\frac{2^{2p+1}}{p-1}\,S^p\tau^{1-p}.
\]
\end{proof}

\begin{proof}[Proof of Lemma~\ref{lem:hp-shared-directional-tail}]
Fix \(x\), and condition on \(x\). By the fundamental theorem of calculus,
\[
\frac{F(x+\mu u;\xi)-F(x-\mu u;\xi)}{2\mu}
=
\frac12\int_{-1}^1
\bigl\langle
\nabla F(x+\mu s u;\xi),u
\bigr\rangle
\,ds.
\]
Similarly,
\[
\frac{f(x+\mu u)-f(x-\mu u)}{2\mu}
=
\frac12\int_{-1}^1
\bigl\langle
\nabla f(x+\mu s u),u
\bigr\rangle
\,ds.
\]
Hence
\[
\frac{F(x+\mu u;\xi)-F(x-\mu u;\xi)}{2\mu}-D_\mu f(x,u)
=
\bigl\langle
\nabla F(x;\xi)-\nabla f(x),u
\bigr\rangle
+
R(x,u,\xi),
\]
where
\begin{align*}
R(x,u,\xi)
&:=
\frac12\int_{-1}^1
\bigl\langle
\nabla F(x+\mu s u;\xi)-\nabla F(x;\xi),u
\bigr\rangle
\,ds \\
&\qquad
-
\frac12\int_{-1}^1
\bigl\langle
\nabla f(x+\mu s u)-\nabla f(x),u
\bigr\rangle
\,ds.
\end{align*}
By Assumption~\ref{ass:sample-smooth} and Remark~\ref{rem:f-smooth}, both
\(F(\cdot;\xi)\) and \(f\) are \(L\)-smooth. Therefore
\[
|R(x,u,\xi)|
\le
\frac12\int_{-1}^1 L\mu |s|\,ds
+
\frac12\int_{-1}^1 L\mu |s|\,ds
=
L\mu.
\]
Thus
\[
\left|
\frac{F(x+\mu u;\xi)-F(x-\mu u;\xi)}{2\mu}-D_\mu f(x,u)
\right|
\le
|\langle \zeta,u\rangle|
+
L\mu\quad
\text{where }\quad
\zeta:=\nabla F(x;\xi)-\nabla f(x).
\]

Define
\[
v:=
\begin{cases}
\zeta/\|\zeta\|_2, & \zeta\neq 0,\\[4pt]
e_1, & \zeta=0,
\end{cases}
\]
where \(e_1\) is the first canonical basis vector, and let
\[
V:=\sqrt d\,|U_1|,
\quad \text{where}\quad
U\sim \Unif(\Sph),
\]is chosen independently of \(\xi\). Since \(u\sim\Unif(\Sph)\), by rotational
invariance we have, conditionally on \(x\) and \(\xi\),
\[
\sqrt d\,|\langle v,u\rangle|
\overset{d}=V.
\]
Since \(|\langle \zeta,u\rangle|=\frac{\|\zeta\|_2}{\sqrt d}\,\sqrt d\,|\langle v,u\rangle|\), it follows that,
conditionally on \(x\) and \(\xi\),
\[
|\langle \zeta,u\rangle|
\overset{d}=
\frac{\|\zeta\|_2}{\sqrt d}\,V.
\]

Moreover, since \(1<p\le 2\),
$
\E[V^p]
=
d^{p/2}\E[|U_1|^p]
\le
d^{p/2}\bigl(\E[U_1^2]\bigr)^{p/2}
=
1.$
Therefore, for every \(r>0\),
\begin{align*}
\Pr\!\left(
|\langle \zeta,u\rangle|>r
\;\middle|\;
x
\right)
&=
\E\!\left[
\Pr\!\left(
|\langle \zeta,u\rangle|>r
\;\middle|\;
x,\xi
\right)
\middle|\;
x
\right] \\
&=
\E\!\left[
\Pr\!\left(
\frac{\|\zeta\|_2}{\sqrt d}V>r
\;\middle|\;
x,\xi
\right)
\middle|\;
x
\right] \\
&=
\E\!\left[
\Pr\!\left(
\|\zeta\|_2>\frac{r\sqrt d}{V}
\;\middle|\;
x,V
\right)
\middle|\;
x
\right].
\end{align*}
Since \(V\) is independent of \(\xi\), Assumption~\ref{ass:hp-shared-prob} yields
\[
\Pr\!\left(
\|\zeta\|_2>\frac{r\sqrt d}{V}
\;\middle|\;
x,V
\right)
\le
\left(\frac{\sigma V}{r\sqrt d}\right)^p.
\]
Hence
\begin{align*}
\Pr\!\left(
|\langle \zeta,u\rangle|>r
\;\middle|\;
x
\right)
&\le
\E\!\left[
\left(\frac{\sigma V}{r\sqrt d}\right)^p
\middle|\;
x
\right] \\
&=
\left(\frac{\sigma}{r\sqrt d}\right)^p \E[V^p]
\le
\frac{\sigma^p}{r^p d^{p/2}}.
\end{align*}

Finally, for every \(s\ge 1\), since \(L\mu\le L\mu s\), we have
\begin{align*}
&\Pr\!\left(
\left|
\frac{F(x+\mu u;\xi)-F(x-\mu u;\xi)}{2\mu}-D_\mu f(x,u)
\right|
>
\left(
\frac{\sigma}{\sqrt d}
+
L\mu
\right)s
\;\middle|\;
x
\right) \\
&\qquad \le
\Pr\!\left(
|\langle \zeta,u\rangle| + L\mu
>
\left(
\frac{\sigma}{\sqrt d}
+
L\mu
\right)s
\;\middle|\;
x
\right) \\
&\qquad \le
\Pr\!\left(
|\langle \zeta,u\rangle|
>
\frac{\sigma}{\sqrt d}\,s
\;\middle|\;
x
\right) \\
&\qquad \le
\left(
\frac{\sigma}{(\sigma/\sqrt d)s\,\sqrt d}
\right)^p
=
s^{-p}.
\end{align*}
\end{proof}
\begin{proof}[Proof of Lemma~\ref{lem:hp-shared-directional-signal-tail}]
Fix \(x\in\R^d\) such that \(f(x)-f_*\le 4\bar\Delta_0\), and let \(u\sim\Unif(\mathbb S^{d-1})\).
By the fundamental theorem of calculus,
\[
D_\mu f(x,u)
=
\frac12\int_{-1}^1 \bigl\langle \nabla f(x+\mu s u),u\bigr\rangle\,ds
=
\langle \nabla f(x),u\rangle + R_\mu(x,u),
\]
where
\[
R_\mu(x,u)
:=
\frac12\int_{-1}^1
\bigl\langle \nabla f(x+\mu s u)-\nabla f(x),u\bigr\rangle\,ds.
\]
Since \(f\) is \(L\)-smooth by Remark~\ref{rem:f-smooth},
\[
|R_\mu(x,u)|
\le
\frac12\int_{-1}^1 L\mu |s|\,ds
=
\frac{L\mu}{2}.
\]

Set \(g:=\nabla f(x)\). By Lemma~\ref{lem:hp-shared-grad-subopt} and the localization
\(f(x)-f_*\le 4\bar\Delta_0\),
\[
\|g\|_2^2 \le 2L(f(x)-f_*) \le 8L\bar\Delta_0.
\]
If \(g=0\), then
$
|D_\mu f(x,u)|\le \frac{L\mu}{2}\le L\mu s$ for every  $s\ge 1,$
so the claim is immediate.

Assume now that \(g\neq 0\), and define
$
v:=\frac{g}{\|g\|_2},$ where $
Z:=\sqrt d\,|\langle v,u\rangle|.$
By rotational invariance,
\[
Z \overset{d}= \sqrt d\,|U_1|,
\quad \text{where}\quad
U\sim\Unif(\mathbb S^{d-1}).
\]
Hence, since \(1<p\le 2\),
\[
\E[Z^p]
=
d^{p/2}\E[|U_1|^p]
\le
d^{p/2}\bigl(\E[U_1^2]\bigr)^{p/2}
=
1.
\]
Therefore, for every \(r>0\), Markov's inequality gives
\[
\Pb_u\!\left(|\langle g,u\rangle|>r\right)
=
\Pb_u\!\left(Z>\frac{r\sqrt d}{\|g\|_2}\right)
\le
\left(\frac{\|g\|_2}{r\sqrt d}\right)^p \E[Z^p]
\le
\left(\frac{\|g\|_2}{r\sqrt d}\right)^p.
\]

Now let \(s\ge 1\). Since \(\|g\|_2\le \sqrt{8L\bar\Delta_0}\), we obtain
\[
\Pb_u\!\left(
|\langle g,u\rangle|>\frac{\sqrt{8L\bar\Delta_0}}{\sqrt d}\,s
\right)
\le s^{-p}.
\]
Using
$
|D_\mu f(x,u)|
\le
|\langle g,u\rangle| + \frac{L\mu}{2}$
and the fact that \(\frac{L\mu}{2}\le L\mu s\) for \(s\ge 1\), we conclude
\begin{align*}
\Pb_u\!\left(
|D_\mu f(x,u)|>
\left(\frac{\sqrt{8L\bar\Delta_0}}{\sqrt d}+L\mu\right)s
\right)
&\le
\Pb_u\!\left(
|\langle g,u\rangle|>
\frac{\sqrt{8L\bar\Delta_0}}{\sqrt d}\,s
\right) \\
&\le s^{-p}.
\end{align*}
\end{proof}

\begin{proof}[Proof of Lemma~\ref{lem:hp-shared-directional-total-tail}]
By Lemma~\ref{lem:hp-shared-directional-signal-tail}, for every $s\ge 1$,
\[
\Pb\bigg(\abs{D_\mu f(x,u)}>3\bigg(\frac{\sqrt{L\bar\Delta_0}+\sigma}{\sqrt d}+L\mu\bigg)s\bigg)\le s^{-p},
\]
because
\[
\frac{\sqrt{8L\bar\Delta_0}}{\sqrt d}+L\mu
\le
3\left(\frac{\sqrt{L\bar\Delta_0}+\sigma}{\sqrt d}+L\mu\right).
\]
By Lemma~\ref{lem:hp-shared-directional-tail}, for every $s\ge 1$,
\[
\Pb(\abs{Y-D_\mu f(x,u)}>\bigg(\frac{\sqrt{L\bar\Delta_0}+\sigma}{\sqrt d}+L\mu\bigg)s)\le s^{-p}.
\]
Therefore, for every $s\ge 1$,
\[
\begin{aligned}
\Pb\bigg(\abs{Y} &> 4\bigg(\frac{\sqrt{L\bar\Delta_0}+\sigma}{\sqrt d}+L\mu\bigg)s\bigg) \\
&\le \Pb\bigg(\abs{D_\mu f(x,u)} > 3\bigg(\frac{\sqrt{L\bar\Delta_0}+\sigma}{\sqrt d}+L\mu\bigg)s\bigg) \\
&\quad + \Pb\bigg(\abs{Y - D_\mu f(x,u)} > \bigg(\frac{\sqrt{L\bar\Delta_0}+\sigma}{\sqrt d}+L\mu\bigg)s\bigg) \\
&\le 2s^{-p}.
\end{aligned}
\]
Let
$
S:=\frac{\sqrt{L\bar\Delta_0}+\sigma}{\sqrt d}+L\mu.$
If $t\ge 4S$, then taking
$
s:=\frac{t}{4S}\ge 1$
in the preceding bound gives
\[
\Pb(|Y|>t)
\le 2s^{-p}
=
2\left(\frac{4S}{t}\right)^p
=
2^{2p+1}S^p t^{-p}.
\]
\end{proof}

\begin{proof}[Proof of Lemma~\ref{lem:hp-shared-clipping-bias}]
Let
$
S:=
\frac{\sqrt{L\bar\Delta_0}+\sigma}{\sqrt d}+L\mu .$
Since
\[
\E[Y\mid u]
=
\frac{f(x+\mu u)-f(x-\mu u)}{2\mu}
=
D_\mu f(x,u),
\]
we have
\[
d\,\E[Yu]
=
d\,\E[D_\mu f(x,u)u]
=
\nabla f_\mu(x).
\]
Therefore,
\[
d\,\E[\psi_\tau(Y)u]-\nabla f_\mu(x)
=
d\,\E[(\psi_\tau(Y)-Y)u].
\]
It holds that
\[
\Bigl\|d\,\E[\psi_\tau(Y)u]-\nabla f_\mu(x)\Bigr\|_2
\le
d\,\E\bigl[|\psi_\tau(Y)-Y|\bigr].
\]

By Lemma~\ref{lem:hp-shared-directional-total-tail}, the random variable \(Y\) satisfies
\[
\forall t\ge 4S,
\quad
\Pb(|Y|>t)\le 2^{2p+1}S^p t^{-p}.
\]
Since the lemma assumes \(\tau\ge 4S\), Lemma~\ref{lem:hp-shared-clipped-bias-abstract} applies and yields
\[
\E[|Y-\psi_\tau(Y)|]
\le
\frac{2^{2p+1}}{p-1}S^p\tau^{1-p}.
\]
Substituting this into the previous bound gives
\[
\Bigl\|d\,\E[\psi_\tau(Y)u]-\nabla f_\mu(x)\Bigr\|_2
\le
\frac{2^{2p+1}}{p-1}\,
d\,S^p\tau^{1-p}.
\]
Recalling the definition of \(S\), this is exactly
\[
\Bigl\|d\,\E[\psi_\tau(Y)u]-\nabla f_\mu(x)\Bigr\|_2
\le
\frac{2^{2p+1}}{p-1}\,
d
\left(
\frac{\sqrt{L\bar\Delta_0}+\sigma}{\sqrt d}
+
L\mu
\right)^p
\tau^{1-p}.
\]

For the equivalent form, let
$
g(x):=
\frac{d}{M}\sum_{\ell=1}^M \psi_\tau(Y_\ell)u_\ell$
where \((Y_\ell,u_\ell)\) are i.i.d. copies of \((Y,u)\). Then
\[
\E[g(x)]
=
d\,\E[\psi_\tau(Y)u],
\]
and therefore
\[
\E[g(x)]-\nabla f_\mu(x)
=
d\,\E[\psi_\tau(Y)u]-\nabla f_\mu(x).
\]
Thus the same bound holds for
$B:=\E[g(x)]-\nabla f_\mu(x).$
\end{proof}

\begin{proof}[Proof of Lemma~\ref{lem:hp-shared-clipped-second-moment}]We have
$
\psi_\tau(z)=\operatorname{sgn}(z)\min\{|z|,\tau\},$
so
$
\psi_\tau(Z)^2=\min\{Z^2,\tau^2\}.$
Using the layer-cake formula,
\[
\E[\psi_\tau(Z)^2]
=2\int_0^\infty t\,\Pb(|\psi_\tau(Z)|>t)\,dt=
2\int_0^\tau t\,\Pb(|Z|>t)\,dt.
\]
We split the integral at \(4S\):
\[
\E[\psi_\tau(Z)^2]
=
2\int_0^{4S} t\,\Pb(|Z|>t)\,dt
+
2\int_{4S}^\tau t\,\Pb(|Z|>t)\,dt.
\]
Since \(\Pb(|Z|>t)\le 1\), the first term is bounded by
$
2\int_0^{4S} t\,dt = 16S^2.$
For the second term, use the assumed tail bound:
\[
2\int_{4S}^\tau t\,\Pb(|Z|>t)\,dt
\le
2^{2p+2}S^p\int_{4S}^\tau t^{1-p}\,dt.
\]
Now, for \(4S\le t\le \tau\) and \(1<p\le 2\),
\[
t^{1-p}=t^{-1}t^{2-p}\le t^{-1}\tau^{2-p},
\]
hence
\[
\int_{4S}^\tau t^{1-p}\,dt
\le
\tau^{2-p}\int_{4S}^\tau \frac{dt}{t}
=
\tau^{2-p}\log\!\left(\frac{\tau}{4S}\right).
\]
Therefore,
\[
\E[\psi_\tau(Z)^2]
\le
16S^2 + 2^{2p+2}S^p\tau^{2-p}\log\!\left(\frac{\tau}{4S}\right).
\]
Since \(\tau\ge 4S\) and \(1<p\le 2\),
$
S^2\le S^p\tau^{2-p},$
so
\[
16S^2 \le 16S^p\tau^{2-p}.
\]
Also,
\[
\log\!\left(\frac{\tau}{4S}\right)\le \log\!\left(\frac{\tau}{S}\right).
\]
Thus
\[
\E[\psi_\tau(Z)^2]
\le
\Bigl(16+2^{2p+2}\log(\tau/S)\Bigr)S^p\tau^{2-p}.
\]
Finally, since \(1<p\le 2\), we have \(2^{2p+2}\le 64\). Hence
\[
\E[\psi_\tau(Z)^2]
\le
64\,S^p\tau^{2-p}\bigl(1+\log(\tau/S)\bigr).
\]
\end{proof}

\begin{proof}[Proof of Lemma~\ref{lem:hp-shared-empirical-fluctuation}]
Let
$
S:=
\frac{\sqrt{L\bar\Delta_0}+\sigma}{\sqrt d}+L\mu.$
For each \(\ell\), define
\[
X_\ell:=
\frac{d}{M}
\Bigl(
\widetilde Y_\ell u_\ell-\E[\widetilde Y_\ell u_\ell]
\Bigr).
\]
Then
$
A=\sum_{\ell=1}^M X_\ell$
and \(X_1,\dots,X_M\) are independent mean-zero random vectors.

Since \(\|u_\ell\|_2=1\) almost surely and \(|\widetilde Y_\ell|\le \tau\), we have
\[
\|X_\ell\|_2
\le
\frac{d}{M}
\Bigl(
|\widetilde Y_\ell|
+
\|\E[\widetilde Y_\ell u_\ell]\|_2
\Bigr)
\le
\frac{2d\tau}{M}.
\]
Thus the almost-sure bound in Lemma~\ref{lem:hp-shared-vector-bernstein} holds with
$
R:=\frac{2d\tau}{M}.$ Next, using
$
\E\|Z-\E Z\|_2^2\le \E\|Z\|_2^2,
$
we obtain
\[
\sum_{\ell=1}^M \E\|X_\ell\|_2^2
\le
\frac{d^2}{M}\E[\widetilde Y^2],
\]
where \(\widetilde Y=\psi_\tau(Y)\), and \(Y\) has the same distribution as in
Lemma~\ref{lem:hp-shared-directional-total-tail}. By Lemma~\ref{lem:hp-shared-directional-total-tail},
\[
\forall t\ge 4S,
\quad
\Pb(|Y|>t)\le 2^{2p+1}S^p t^{-p}.
\]
Since \(\tau\ge 4S\), Lemma~\ref{lem:hp-shared-clipped-second-moment} gives
\[
\E[\widetilde Y^2]
\le
64S^p\tau^{2-p}
\left(1+\log(\tau/S)\right).
\]
Therefore
\[
V:=
\sum_{\ell=1}^M \E\|X_\ell\|_2^2
\le
\frac{64d^2}{M}
S^p\tau^{2-p}
\left(1+\log(\tau/S)\right).
\]

Applying Lemma~\ref{lem:hp-shared-vector-bernstein} with failure probability \(\rho\), we obtain, with probability at least \(1-\rho\),
\[
\|A\|_2
\le
\sqrt V\left(1+\sqrt{2\log(1/\rho)}\right)
+
\frac{4R}{3}\log(1/\rho).
\]
Using the bounds on \(V\) and \(R\), this gives
\[
\|A\|_2
\le
\frac{8d}{\sqrt M}
S^{p/2}\tau^{1-\frac p2}
\sqrt{1+\log(\tau/S)}
\left(1+\sqrt{2\log(1/\rho)}\right)
+
\frac{8d\tau}{3M}\log(1/\rho).
\]

\end{proof}

\begin{proof}[Proof of Corollary~\ref{cor:hp-shared-localized}]
We have
\[
g(x)-\nabla f_\mu(x)
=
\underbrace{g(x)-\E[g(x)]}_{A}
+
\underbrace{\E[g(x)]-\nabla f_\mu(x)}_{B}.
\]
Apply Lemma~\ref{lem:hp-shared-empirical-fluctuation} with failure probability
$
\rho:=\frac{\delta}{T}.$
Since \(\log(1/\rho)=\log(T/\delta)=\lambda\), we obtain, with probability at least
\(1-\delta/T\),
\[
\begin{aligned}
\norm{A}_2
\le\;&
\frac{8d}{\sqrt M}
\left(
\frac{\sqrt{L\bar\Delta_0}+\sigma}{\sqrt d}
+
L\mu
\right)^{p/2}
\tau^{1-\frac p2}
\\
&\quad\times
\sqrt{
1+
\log\left(
\frac{\tau}{
\frac{\sqrt{L\bar\Delta_0}+\sigma}{\sqrt d}+L\mu
}
\right)
}
\left(1+\sqrt{2\lambda}\right)
+
\frac{8d\tau}{3M}\lambda.
\end{aligned}
\]
On the other hand, Lemma~\ref{lem:hp-shared-clipping-bias} gives deterministically
\[
\norm{B}_2
\le
\frac{2^{2p+1}}{p-1}\,
d
\left(
\frac{\sqrt{L\bar\Delta_0}+\sigma}{\sqrt d}
+
L\mu
\right)^p
\tau^{1-p}.
\]
Combining the two bounds by the triangle inequality gives the result.
\end{proof}

\begin{proof}[Proof of Corollary~\ref{cor:hp-shared-localized-tuned} (Appendix form of Theorem~\ref{thm:tuned-dev})]
Let
$
S:=
\frac{\sqrt{L\bar\Delta_0}+\sigma}{\sqrt d}+L\mu.$
Since \(T\ge 3\) and \(\delta\in(0,1]\), we have
$
\lambda=\log(T/\delta)\ge \log 3>1.
$
Moreover, since \(M\ge \lambda\),
\[
\tau=8S\left(\frac{M}{\lambda}\right)^{1/p}
\ge 8S\ge 4S,
\]
so Corollary~\ref{cor:hp-shared-localized} applies. From Corollary~\ref{cor:hp-shared-localized}, with probability at least \(1-\delta/T\),
\[
\begin{aligned}
\norm{g(x)-\nabla f_\mu(x)}_2
\le\;&
\frac{8d}{\sqrt M}S^{p/2}\tau^{1-\frac p2}
\sqrt{1+\log(\tau/S)}
\left(1+\sqrt{2\lambda}\right)
\\
&+
\frac{8d\tau}{3M}\lambda
+
\frac{2^{2p+1}}{p-1}dS^p\tau^{1-p}.
\end{aligned}
\]
We bound the three terms separately.

For the first term, substituting
$
\tau=8S\left(\frac{M}{\lambda}\right)^{1/p}$
gives
$
\frac{8d}{\sqrt M}S^{p/2}\tau^{1-\frac p2}
=
8^{2-\frac p2}
dS
M^{-\frac{p-1}{p}}
\lambda^{-\frac{2-p}{2p}}.
$
Since \(\lambda\ge 1\),
$
1+\sqrt{2\lambda}
\le
(1+\sqrt 2)\sqrt{\lambda}.
$
Therefore,
\[
\begin{aligned}
&\frac{8d}{\sqrt M}S^{p/2}\tau^{1-\frac p2}
\sqrt{1+\log(\tau/S)}
\left(1+\sqrt{2\lambda}\right)
\\
&\le
8^{2-\frac p2}(1+\sqrt 2)
dS
\left(\frac{\lambda}{M}\right)^{\frac{p-1}{p}}
\sqrt{1+\log\left(8\left(\frac{M}{\lambda}\right)^{1/p}\right)}.
\end{aligned}
\]
Since \(1<p\le 2\), we have \(8^{2-p/2}\le 8^{3/2}<23\). Also, because \(M\ge \lambda\),
\[
1+\log\left(8\left(\frac{M}{\lambda}\right)^{1/p}\right)
\le
(1+\log 8)
\left(
1+\log\left(\frac{M}{\lambda}\right)
\right).
\]
Thus the first term is bounded by
\[
98\,dS
\left(\frac{\lambda}{M}\right)^{\frac{p-1}{p}}
\sqrt{1+\log\left(\frac{M}{\lambda}\right)}.
\]

For thelinear term,
\[
\frac{8d\tau}{3M}\lambda
=
\frac{64}{3}dS
\left(\frac{\lambda}{M}\right)^{\frac{p-1}{p}}
\le
22\,dS
\left(\frac{\lambda}{M}\right)^{\frac{p-1}{p}}
\sqrt{1+\log\left(\frac{M}{\lambda}\right)}.
\]

For the clipping-bias term,
\[
\frac{2^{2p+1}}{p-1}dS^p\tau^{1-p}
=
\frac{2^{4-p}}{p-1}
dS
\left(\frac{\lambda}{M}\right)^{\frac{p-1}{p}}.
\]
Since \(\sqrt{1+\log(M/\lambda)}\ge 1\), this is at most
\[
\frac{2^{4-p}}{p-1}
dS
\left(\frac{\lambda}{M}\right)^{\frac{p-1}{p}}
\sqrt{1+\log\left(\frac{M}{\lambda}\right)}.
\]

Combining the three bounds gives
\[
\norm{g(x)-\nabla f_\mu(x)}_2
\le
\left(
120+\frac{2^{4-p}}{p-1}
\right)
dS
\left(\frac{\lambda}{M}\right)^{\frac{p-1}{p}}
\sqrt{1+\log\left(\frac{M}{\lambda}\right)}.
\]

\end{proof}

\begin{proof}[Proof of Proposition~\ref{prop:hp-shared-onestep}]
Since $f$ is $L$-smooth, its spherical smoothing $f_\mu$ is also $L$-smooth. Hence
\[
f_\mu(x^+)
\le
f_\mu(x)+\ip{\nabla f_\mu(x)}{x^+-x}+\frac{L}{2}\norm{x^+-x}_2^2.
\]
Using $x^+-x=-\alpha g(x)$, we obtain
\[
f_\mu(x^+)
\le
f_\mu(x)-\alpha\ip{\nabla f_\mu(x)}{g(x)}+\frac{L\alpha^2}{2}\norm{g(x)}_2^2.
\]
Now write
\[
g(x)=\nabla f_\mu(x)+e(x),
\]
so that
\[
\ip{\nabla f_\mu(x)}{g(x)}
=
\norm{\nabla f_\mu(x)}_2^2+\ip{\nabla f_\mu(x)}{e(x)}.
\]
Moreover,
\[
\norm{g(x)}_2^2
=
\norm{\nabla f_\mu(x)+e(x)}_2^2
\le
2\norm{\nabla f_\mu(x)}_2^2+2\norm{e(x)}_2^2,
\]
and, by Young's inequality,
\[
\abs{\ip{\nabla f_\mu(x)}{e(x)}}
\le
\norm{\nabla f_\mu(x)}_2\,\norm{e(x)}_2
\le
\frac14\norm{\nabla f_\mu(x)}_2^2+\norm{e(x)}_2^2.
\]
Substituting these bounds into the previous inequality yields
\[
f_\mu(x^+)
\le
f_\mu(x)
-\alpha\Bigl(\frac34-L\alpha\Bigr)\norm{\nabla f_\mu(x)}_2^2
+\alpha(1+L\alpha)\norm{e(x)}_2^2.
\]
If $\alpha\le \frac{1}{4L}$, then
\[
\frac34-L\alpha\ge \frac12
\qquad\text{and}\qquad
1+L\alpha\le \frac54.
\]
Therefore,
\[
f_\mu(x^+)
\le
f_\mu(x)-\frac{\alpha}{2}\norm{\nabla f_\mu(x)}_2^2+\frac{5\alpha}{4}\norm{e(x)}_2^2.
\]
\end{proof}

\begin{proof}[Proof of Theorem~\ref{thm:hp-shared-clipped}]
Set
$
\lambda:=\log\frac{T}{\delta}.
$
Since \(T\ge 3\) and \(\delta\in(0,1]\), we have
$
\lambda\ge \log 3>1.
$

We first define the filtration generated by the algorithm. Let
 $
\mathcal F_{-1}:=\sigma(x_0),$
and for \(t\ge 0\), define
\[
\mathcal F_t
:=
\sigma\!\left(
x_0,
\{u_{s,\ell},\xi_{s,\ell}:0\le s\le t,\ 1\le \ell\le M\}
\right).
\]
Then \(x_t\) is \(\mathcal F_{t-1}\)-measurable, while \(g_t\) is
\(\mathcal F_t\)-measurable. Moreover, conditionally on \(\mathcal F_{t-1}\),
the fresh samples
\[
\{u_{t,\ell},\xi_{t,\ell}:1\le \ell\le M\}
\]
are independent of the past and have the same distribution.

For each \(t=0,\dots,T-1\), define the localization event
\[
\mathcal L_t
:=
\Bigl\{
f(x_s)-f_*\le 4\bar\Delta_0
\ \text{for all } s=0,\dots,t
\Bigr\}.
\]
Since \(x_0,\dots,x_t\) are \(\mathcal F_{t-1}\)-measurable, we have
$
\mathcal L_t\in \mathcal F_{t-1}.$
Also define the estimator-good event
\[
\mathcal E_t
:=
\Bigl\{
\norm{g_t-\nabla f_\mu(x_t)}_2
\le
\eta_0(M,\mu,\delta,T)
\Bigr\}.
\]

On the event \(\mathcal L_t\), the point \(x_t\) satisfies
$
f(x_t)-f_*\le 4\bar\Delta_0.$
Therefore, conditionally on \(\mathcal F_{t-1}\), we may apply
Corollary~\ref{cor:hp-shared-localized-tuned} to the fixed point \(x_t\),
using the fresh samples at iteration \(t\). Hence, almost surely,
\[
\mathbf 1_{\mathcal L_t}
\Pb(\mathcal E_t^c\mid \mathcal F_{t-1})
\le
\mathbf 1_{\mathcal L_t}\frac{\delta}{T}.
\]
It follows that
\[
\begin{aligned}
\Pb(\mathcal E_t^c\cap \mathcal L_t)
&=
\E\!\left[
\mathbf 1_{\mathcal E_t^c}\mathbf 1_{\mathcal L_t}
\right]
\\
&=
\E\!\left[
\E\!\left[
\mathbf 1_{\mathcal E_t^c}\mathbf 1_{\mathcal L_t}
\mid \mathcal F_{t-1}
\right]
\right]
\\
&=
\E\!\left[
\mathbf 1_{\mathcal L_t}
\Pb(\mathcal E_t^c\mid \mathcal F_{t-1})
\right]
\\
&\le
\E\!\left[
\mathbf 1_{\mathcal L_t}\frac{\delta}{T}
\right]
\\
&\le
\frac{\delta}{T}.
\end{aligned}
\]
By the union bound,
\[
\Pb\left(
\bigcup_{t=0}^{T-1}(\mathcal E_t^c\cap \mathcal L_t)
\right)
\le
\delta.
\]
Define 
$
\mathcal G
:=
\bigcap_{t=0}^{T-1}
\left(\mathcal L_t^c \cup \mathcal E_t\right).$
Since
$
\mathcal G^c
=
\bigcup_{t=0}^{T-1}(\mathcal E_t^c\cap \mathcal L_t),
$
it holds that
\[
\Pb(\mathcal G)\ge 1-\delta.
\]

We work on the good event \(\mathcal G\)
 for the rest of the proof.
We first prove by induction that the iterates remain localized. Clearly
\[
f(x_0)-f_*=\Delta_0\le \bar\Delta_0\le 4\bar\Delta_0,
\]
so \(\mathcal L_0\) holds. Suppose that \(\mathcal L_t\) holds for some
\(t\le T-2\). Then, on the good event, \(\mathcal E_s\) holds for every
\(s=0,\dots,t\), since \(\mathcal L_t\) implies \(\mathcal L_s\) for all
\(s\le t\). Proposition~\ref{prop:hp-shared-onestep} gives, for
every \(s=0,\dots,t\),
\[
f_\mu(x_{s+1})
\le
f_\mu(x_s)
-\frac{\alpha}{2}\norm{\nabla f_\mu(x_s)}_2^2
+\frac{5\alpha}{4}\eta_0(M,\mu,\delta,T)^2.
\]
Dropping the negative term and summing from \(s=0\) to \(t\), we obtain
\[
f_\mu(x_{t+1})-f_*
\le
f_\mu(x_0)-f_*
+
\frac{5\alpha}{4}(t+1)\eta_0(M,\mu,\delta,T)^2.
\]
We have  \(t+1\le T\), and using Lemma \ref{lem:hp-shared-smoothing-bias}, 
$
f_\mu(x_0)-f_*
\le
f(x_0)-f_*+\frac{L\mu^2}{2}
=
\bar\Delta_0$, then
\[
f_\mu(x_{t+1})-f_*
\le
\bar\Delta_0
+
\frac{5\alpha T}{4}\eta_0(M,\mu,\delta,T)^2.
\]
Now \(\alpha=\frac{1}{4L}\), \(\eta_0\le \eta\le \epsilon/4\), and
\[
T
=
\max\left\{3,\left\lceil\frac{32L\bar\Delta_0}{\epsilon^2}\right\rceil\right\}
\le
\frac{96L\bar\Delta_0}{\epsilon^2}.
\]
Indeed, if \(a:=32L\bar\Delta_0/\epsilon^2\), then the assumption
\(\epsilon^2\le 32L\bar\Delta_0\) gives \(a\ge 1\), and hence
\[
\max\{3,\lceil a\rceil\}\le 3a
=
\frac{96L\bar\Delta_0}{\epsilon^2}.
\]
Therefore,
$
\frac{5\alpha T}{4}\eta_0^2
\le
\frac{5}{16L}
\cdot
\frac{96L\bar\Delta_0}{\epsilon^2}
\cdot
\frac{\epsilon^2}{16}
=
\frac{15}{8}\bar\Delta_0.$
Consequently,
\[
f_\mu(x_{t+1})-f_*
\le
\frac{23}{8}\bar\Delta_0.
\]
Using Lemma \ref{lem:hp-shared-smoothing-bias},
\[
f(x_{t+1})-f_*
\le
f_\mu(x_{t+1})-f_*+\frac{L\mu^2}{2}
\le
\frac{23}{8}\bar\Delta_0+\bar\Delta_0
=
\frac{31}{8}\bar\Delta_0
<
4\bar\Delta_0.
\]
Thus \(\mathcal L_{t+1}\) holds. By induction, \(\mathcal L_t\) holds for all
\(t=0,\dots,T-1\). Hence, on the good event, \(\mathcal E_t\) holds for all
\(t=0,\dots,T-1\).

We now prove the stationarity bound. Summing the one-step descent inequality
over \(t=0,\dots,T-1\), and using \(\norm{e_t}_2\le \eta_0(M,\mu,\delta,T)\), gives
\[
\frac{\alpha}{2}
\sum_{t=0}^{T-1}
\norm{\nabla f_\mu(x_t)}_2^2
\le
f_\mu(x_0)-f_*
+
\frac{5\alpha T}{4}\eta_0(M,\mu,\delta,T)^2.
\]
Since \(f_\mu(x_0)-f_*\le \bar\Delta_0\), it holds that
\[
\frac1T
\sum_{t=0}^{T-1}
\norm{\nabla f_\mu(x_t)}_2^2
\le
\frac{2\bar\Delta_0}{\alpha T}
+
\frac52\eta_0(M,\mu,\delta,T)^2.
\]
Using \(\alpha=\frac{1}{4L}\) and
$
T\ge \frac{32L\bar\Delta_0}{\epsilon^2},$
we get
\[
\frac{2\bar\Delta_0}{\alpha T}
=
\frac{8L\bar\Delta_0}{T}
\le
\frac{\epsilon^2}{4}.
\]
Therefore,
\[
\frac1T
\sum_{t=0}^{T-1}
\norm{\nabla f_\mu(x_t)}_2^2
\le
\frac{\epsilon^2}{4}
+
\frac52\eta_0(M,\mu,\delta,T)^2.
\]

Using Lemma \ref{lem:hp-shared-smoothing-bias},
$
\norm{\nabla f(x_t)}_2
\le
\norm{\nabla f_\mu(x_t)}_2+L\mu.$
Hence
\[
\norm{\nabla f(x_t)}_2^2
\le
2\norm{\nabla f_\mu(x_t)}_2^2+2L^2\mu^2.
\]
Averaging over \(t=0,\dots,T-1\), we obtain
\begin{align*}
\frac1T
\sum_{t=0}^{T-1}
\norm{\nabla f(x_t)}_2^2
&\le
2\left(
\frac{\epsilon^2}{4}
+
\frac52\eta_0^2
\right)
+
2L^2\mu^2\\
&\le \frac{\epsilon^2}{2}
+
5\eta_0^2
+
2L^2\mu^2.
\end{align*}

Since
$
\eta=\eta_0+L\mu,$
we have
\[
5\eta_0^2+2L^2\mu^2
\le
5(\eta_0+L\mu)^2
=
5\eta^2.
\]
Using the assumption \(\eta\le \epsilon/4\), we conclude that
\[
\frac1T
\sum_{t=0}^{T-1}
\norm{\nabla f(x_t)}_2^2
\le
\frac{\epsilon^2}{2}
+
5\frac{\epsilon^2}{16}
=
\frac{13}{16}\epsilon^2
\le
\epsilon^2.
\]

\end{proof}

\section{Proofs for the High-Probability Analysis of Momentum RSC-ZO}
\label{app:momentum}

This appendix proves the high-probability guarantee for the momentum variant of
RSC-ZO. The proof builds on the localized scalar-clipping estimates used for
the base method and adds a momentum-tracking argument. The main quantity to
control is the error between the momentum vector \(m_t\) and the smoothed
gradient \(\nabla f_\mu(x_t)\), while keeping the iterates in the localized
region where the one-step estimator bounds apply.

\paragraph{Roadmap.}
We first decompose each clipped estimator into its conditional mean, centered
fluctuation, and clipping bias. We then control the geometrically weighted
fluctuation accumulated by the momentum recursion. After removing this
fluctuation, the remaining tracking error satisfies a stable deterministic
recurrence. Combining this recurrence with the Lyapunov function defined below
and a first-exit localization step gives the convergence bound.

\subsection{Setup and notation}

We work under Assumptions~\ref{ass:Well-defined}, \ref{ass:sample-smooth}, and
\ref{ass:hp-shared-prob}. Write
\[
  \Delta_0:=f(x_0)-f_*,
  \qquad
  \bar\Delta_0:=\Delta_0+\frac{L\mu^2}{2},
  \qquad
  S_\mu:=\frac{\sqrt{L\bar\Delta_0}+\sigma}{\sqrt d}+L\mu .
\]
Thus \(dS_\mu=\sqrt d(\sqrt{L\bar\Delta_0}+\sigma)+dL\mu\).

Let \(\beta\in[1/2,1)\), \(T\ge4\), \(\delta\in(0,1]\), and
\(M_0,M\in\mathbb N\). Define
\(\lambda:=\log(2T/\delta)\), \(\lambda_0:=\log(2/\delta)\), and assume
\(M\ge(1-\beta)\lambda\), \(M_0\ge\lambda_0\). The running and warm-start
thresholds are
\[
  \tau
  :=
  8S_\mu
  \left(\frac{M}{(1-\beta)\lambda}\right)^{1/p},
  \qquad
  \tau_0
  :=
  8S_\mu
  \left(\frac{M_0}{\lambda_0}\right)^{1/p}.
\]
We also write \(q:=(1-\beta)\lambda/M\) and \(q_0:=\lambda_0/M_0\). Let
\[
  U
  :=
  C_U dS_\mu q^{\frac{p-1}{p}}
  \sqrt{1+\log\frac{M}{(1-\beta)\lambda}},
  \qquad
  B
  :=
  C_B dS_\mu q^{\frac{p-1}{p}},
\]
and
\[
  G
  :=
  C_G dS_\mu q_0^{\frac{p-1}{p}}
  \sqrt{1+\log\frac{M_0}{\lambda_0}},
  \qquad
  H_\mu:=\sqrt{2L\Delta_0}+L\mu .
\]
Here \(C_U,C_B,C_G>0\) are sufficiently large absolute constants depending only
on the constants in the one-step estimator bounds below.

The momentum method is initialized by
\[
  g_0=G_\mu(x_0;M_0,\tau_0),
  \qquad
  m_0=g_0,
  \qquad
  x_1=x_0-\alpha m_0.
\]
For \(t\ge1\), it forms
\[
  g_t
  =
  G_\mu(x_t;M,\tau)
  =
  \frac dM
  \sum_{\ell=1}^M
  \psi_\tau(Y_{t,\ell})u_{t,\ell},
  \qquad
  Y_{t,\ell}
  :=
  \frac{
    F(x_t+\mu u_{t,\ell};\xi_{t,\ell})
    -
    F(x_t-\mu u_{t,\ell};\xi_{t,\ell})
  }{2\mu},
\]
and updates
\[
  m_t=\beta m_{t-1}+(1-\beta)g_t,
  \qquad
  x_{t+1}=x_t-\alpha m_t.
\]

Let \(\mathcal F_{-1}:=\sigma(x_0)\). Let \(\mathcal F_0\) contain the
warm-start batch, and for \(t\ge1\), let \(\mathcal F_t\) contain the warm-start
batch and all running batches up to time \(t\). Then \(x_t\) is
\(\mathcal F_{t-1}\)-measurable for every \(t\ge1\), while \(g_t,m_t,x_{t+1}\)
are \(\mathcal F_t\)-measurable.

For \(t\ge1\), define
\[
  \mathcal L_t
  :=
  \left\{
    f(x_s)-f_*\le4\bar\Delta_0
    \text{ for all }s=0,1,\ldots,t
  \right\}.
\]
Then \(\mathcal L_t\in\mathcal F_{t-1}\). Define
\[
  a_t:=g_t-\E[g_t\mid\mathcal F_{t-1}],
  \qquad
  b_t:=\E[g_t\mid\mathcal F_{t-1}]-\nabla f_\mu(x_t),
\]
so that \(g_t=\nabla f_\mu(x_t)+a_t+b_t\). Let
\(e_t:=m_t-\nabla f_\mu(x_t)\). Define
\[
  \widehat z_t
  :=
  \sum_{s=1}^t
  \beta^{t-s}(1-\beta)a_s,
  \qquad
  z_t
  :=
  \sum_{s=1}^t
  \beta^{t-s}(1-\beta)\mathbf 1_{\mathcal L_s}a_s.
\]
On \(\mathcal L_t\), we have \(\widehat z_t=z_t\). Finally, define
\[
  u_t:=e_t-\widehat z_t,
  \qquad
  \Phi_t
  :=
  f_\mu(x_t)
  +
  \frac{2\alpha}{1-\beta}\|u_t\|_2^2,
  \qquad t\ge1.
\]

\subsection{One-step estimator bounds used by the momentum proof}

We use the following localized one-step consequences of the scalar-clipping
analysis.

\begin{lemma}[Localized clipped-estimator bounds]
\label{lem:momentum-onestep-estimator}
There exist absolute constants \(C_{\rm bias},C_{\rm var}>0\) such that the
following holds. Let \(x\) be \(\mathcal H\)-measurable and assume
\(f(x)-f_*\le4\bar\Delta_0\). Let \((u_\ell,\xi_\ell)_{\ell=1}^M\) be
conditionally independent of \(\mathcal H\), with
\(u_\ell\sim\Unif(\mathbb S^{d-1})\) and \(\xi_\ell\sim\mathcal P\). Define
\[
  Y_\ell
  :=
  \frac{
    F(x+\mu u_\ell;\xi_\ell)-F(x-\mu u_\ell;\xi_\ell)
  }{2\mu},
  \qquad
  Z_\ell:=\psi_\tau(Y_\ell)u_\ell,
  \qquad
  g:=\frac dM\sum_{\ell=1}^M Z_\ell .
\]
If \(\bar g:=\E[g\mid\mathcal H]\) and \(\tau\ge8S_\mu\), then
\[
  \|\bar g-\nabla f_\mu(x)\|_2
  \le
  C_{\rm bias}dS_\mu^p\tau^{1-p}.
\]
Moreover, with
\(W_\ell:=(d/M)(Z_\ell-\E[Z_\ell\mid\mathcal H])\), one has, conditionally on
\(\mathcal H\),
\[
  \|W_\ell\|_2\le\frac{2d\tau}{M}
  \quad\text{a.s.},
  \qquad
  \sum_{\ell=1}^M
  \E[\|W_\ell\|_2^2\mid\mathcal H]
  \le
  C_{\rm var}
  \frac{d^2}{M}
  S_\mu^p\tau^{2-p}
  \left(1+\log\frac{\tau}{S_\mu}\right).
\]
\end{lemma}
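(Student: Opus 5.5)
The plan is to reduce everything to the fixed-point results already proved in Appendix~\ref{sec:hp-clipped-shared}, applied conditionally on \(\mathcal H\). Since \(x\) is \(\mathcal H\)-measurable and the fresh samples \((u_\ell,\xi_\ell)\) are conditionally independent of \(\mathcal H\) with the stated laws, regular conditional distributions (or a freezing/substitution lemma) allow us to treat \(x\) as a deterministic point satisfying \(f(x)-f_*\le 4\bar\Delta_0\). Conditionally on \(\mathcal H\), \(Y_\ell\) then has the law of the variable \(Y\) in Lemma~\ref{lem:hp-shared-directional-total-tail}. That lemma gives, for every \(t\ge 4S_\mu\), the conditional tail bound \(\Pb(|Y_\ell|>t\mid\mathcal H)\le 2^{2p+1}S_\mu^pt^{-p}\) almost surely. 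Measurability of the conditional quantities in \(x\) follows from the measurability of \(F\) and of \(\nabla f_\mu\).

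For the bias bound, note that \(\tau\ge 8S_\mu\ge 4S_\mu\), so Lemma~\ref{lem:hp-shared-clipping-bias} applies conditionally. Since \(\bar g = d\,\E[\psi_\tau(Y)u\mid\mathcal H]\), this gives \(\|\bar g-\nabla f_\mu(x)\|_2\le \frac{2^{2p+1}}{p-1}dS_\mu^p\tau^{1-p}\). Hence \(C_{\rm bias}=2^{2p+1}/(p-1)\), which is at most \(32/(p-1)\). This constant depends on \(p\), so ``absolute'' should be read as depending only on \(p\). I will note this explicitly, consistent with Remark~\ref{rem:momentum-complexity}, where constants may depend on \(p\).

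For the fluctuation, \(|\psi_\tau(Y_\ell)|\le\tau\) and \(\|u_\ell\|_2=1\) give \(\|Z_\ell\|_2\le\tau\) and \(\|\E[Z_\ell\mid\mathcal H]\|_2\le\tau\). Therefore \(\|W_\ell\|_2\le 2d\tau/M\). For the second moment, I use \(\E[\|Z-\E[Z\mid\mathcal H]\|_2^2\mid\mathcal H]\le\E[\|Z\|_2^2\mid\mathcal H]=\E[\psi_\tau(Y_\ell)^2\mid\mathcal H]\). Lemma~\ref{lem:hp-shared-clipped-second-moment} with \(S=S_\mu\) and \(\tau\ge4S_\mu\) bounds this by \(64S_\mu^p\tau^{2-p}(1+\log(\tau/S_\mu))\). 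Multiplying by \(d^2/M^2\) and summing over the \(M\) terms yields the claim with \(C_{\rm var}=64\).

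The only delicate point is the conditioning step: the earlier lemmas are stated for a fixed non-random \(x\), but here \(x\) is random. I would handle this by the standard disintegration argument. Define \(h(y):=\E\,\Phi(y,(u_\ell,\xi_\ell)_\ell)\) for a fixed point \(y\), where \(\Phi\) is the relevant integrand. Conditional independence then gives \(\E[\Phi(x,\cdot)\mid\mathcal H]=h(x)\) almost surely. The fixed-point bounds hold for every \(y\) in the sublevel set \(\{f-f_*\le4\bar\Delta_0\}\), and \(x\) lies in this set almost surely.
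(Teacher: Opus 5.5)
Your proposal is correct and follows the same route as the paper's proof. Conditionally on $\mathcal H$, you combine the localized tail bound of Lemma~\ref{lem:hp-shared-directional-total-tail}, the tail-integration bias bound, the bound $\|W_\ell\|_2\le 2d\tau/M$, and the clipped second-moment lemma, with explicit constants and a cleaner freezing argument than the paper's sketch.

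Your caveat that $C_{\rm bias}=2^{2p+1}/(p-1)$ depends on $p$ is also accurate. The paper's ``absorbing numerical constants'' hides the same $1/(p-1)$ factor, which cannot be removed uniformly over $p\in(1,2]$ and is consistent with Remark~\ref{rem:momentum-complexity}.
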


\begin{proof}
Under localization, the shared-randomness directional estimator satisfies
\[
  \Pr(|Y_\ell|>r\mid\mathcal H)
  \lesssim
  \left(\frac{S_\mu}{r}\right)^p,
  \qquad r\ge S_\mu .
\]
Therefore
\[
  \|\bar g-\nabla f_\mu(x)\|_2
  \le
  d\,\E[|Y_\ell|\mathbf 1\{|Y_\ell|>\tau\}\mid\mathcal H]
  \lesssim
  dS_\mu^p\tau^{1-p}.
\]
Also \(\|Z_\ell\|_2\le\tau\), and hence
\[
  \|W_\ell\|_2
  \le
  \frac dM
  \left(
    \|Z_\ell\|_2+\E[\|Z_\ell\|_2\mid\mathcal H]
  \right)
  \le
  \frac{2d\tau}{M}.
\]
Finally,
\[
  \sum_{\ell=1}^M\E[\|W_\ell\|_2^2\mid\mathcal H]
  \le
  \frac{4d^2}{M}
  \E[\psi_\tau(Y_\ell)^2\mid\mathcal H],
\]
and weak-\(L_p\) tail integration gives
\[
  \E[\psi_\tau(Y_\ell)^2\mid\mathcal H]
  \lesssim
  S_\mu^p\tau^{2-p}
  \left(1+\log\frac{\tau}{S_\mu}\right).
\]
Absorbing numerical constants into \(C_{\rm bias},C_{\rm var}\) proves the
claim.
\end{proof}

\begin{lemma}[Vector Freedman]
\label{lem:momentum-freedman}
Let \((X_i)_{i=1}^n\) be a martingale-difference sequence in a Hilbert space
with respect to a filtration \((\mathcal H_i)_{i=0}^n\). Assume
\(\|X_i\|_2\le R\) a.s. for all \(i\), and
\[
  \sum_{i=1}^n
  \E[\|X_i\|_2^2\mid\mathcal H_{i-1}]
  \le V
  \quad\text{a.s.}
\]
Then there is an absolute constant \(C_F>0\) such that, for every \(\eta>0\),
with probability at least \(1-e^{-\eta}\),
\[
  \left\|
    \sum_{i=1}^n X_i
  \right\|_2
  \le
  C_F(\sqrt{V\eta}+R\eta).
\]
\end{lemma}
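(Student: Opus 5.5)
This is a vector-valued Freedman inequality. The plan is to reduce it to a scalar supermartingale argument via a smooth norm-like functional, which is the Pinelis route for $(2,D)$-smooth Banach spaces (Hilbert spaces have $D=1$).

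Let $S_k:=\sum_{i\le k}X_i$. Write $\E_{i-1}$ for conditional expectation given $\mathcal H_{i-1}$. For $\theta>0$ consider $\Psi(x):=\cosh(\theta\|x\|_2)$.

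\textbf{Step 1: one-step conditional bound.} The main step is to show that for $\theta R\le 1$,
\[
\E_{i-1}\bigl[\Psi(S_i)\bigr]\le \Psi(S_{i-1})\Bigl(1+e\,\theta^2\,\E_{i-1}\|X_i\|_2^2\Bigr)\le \Psi(S_{i-1})\exp\!\bigl(e\,\theta^2\,\E_{i-1}\|X_i\|_2^2\bigr).
\]
To prove it, fix $x=S_{i-1}$ and $y=X_i$, and set $\varphi(s):=\Psi(x+sy)$. A Taylor expansion in $s$ gives $\varphi(1)=\varphi(0)+\varphi'(0)+\int_0^1(1-s)\varphi''(s)\,ds$. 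The first-order term $\varphi'(0)$ is linear in $y$, so it vanishes under $\E_{i-1}$ because $\E_{i-1}X_i=0$. For the second-order term, the Hilbert-space structure gives $\varphi''(s)\le \theta^2\|y\|^2\cosh(\theta\|x+sy\|)$ (Pinelis's lemma). Then use $\|x+sy\|\le\|x\|+R$ together with $\cosh(a+b)\le e^{b}\cosh a$ to get $\cosh(\theta\|x+sy\|)\le e^{\theta R}\Psi(x)$. I expect this second-derivative estimate, especially its justification at $x+sy=0$, to be the main obstacle. I would handle it by smoothing, replacing $\|\cdot\|$ with $\sqrt{\|\cdot\|^2+\epsilon^2}$ and letting $\epsilon\to0$.

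\textbf{Step 2: supermartingale and optional bound.} Define
\[
M_k:=\Psi(S_k)\exp\Bigl(-e\theta^2\sum_{i\le k}\E_{i-1}\|X_i\|_2^2\Bigr).
\]
By Step 1 this is a nonnegative supermartingale with $M_0=1$, so $\E M_n\le 1$. Since $\sum_i\E_{i-1}\|X_i\|^2\le V$ almost surely, $\E\cosh(\theta\|S_n\|)\le e^{e\theta^2 V}$. Markov's inequality and $\cosh a\ge e^a/2$ then give
\[
\Pr\bigl(\|S_n\|\ge r\bigr)\le 2\exp\bigl(e\theta^2V-\theta r\bigr).
\]

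\textbf{Step 3: optimize $\theta$.} Take $\theta:=\min\{1/R,\ \sqrt{\eta/V}\}$ and $r:=C(\sqrt{V\eta}+R\eta)$.
\begin{itemize}
\item If $\sqrt{\eta/V}\le 1/R$, the exponent is at most $e\eta-C\eta$.
\item Otherwise $\theta=1/R$, and $V<R^2\eta$, so $e\theta^2V\le e\eta$ while $\theta r\ge C\eta$.
\end{itemize}
In both cases the bound is $2e^{(e-C)\eta}$. Choosing $C$ large makes this at most $e^{-\eta}$ whenever $\eta\ge 1$.

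For $\eta<1$, I would either absorb the prefactor $2$ by enlarging $C_F$ (using $2e^{-(C-e)\eta}\le e^{-\eta}$ once $C$ is large and $\eta\ge\log 2$), or handle $\eta<\log 2$ directly. In that range, Chebyshev's inequality gives $\Pr(\|S_n\|>r)\le V/r^2\le 1/(C^2\eta)$. If that is not below $e^{-\eta}$, I would fall back on noting that $e^{-\eta}$ is close to $1$ there and that a constant-factor Chebyshev bound suffices after adjusting $C_F$. This gives the claim with an absolute constant $C_F$.
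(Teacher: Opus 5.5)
The paper states this lemma without proof, so the only question is whether your argument establishes it. Steps 1--3 are correct; this is Pinelis's argument. The bound $\varphi''(s)\le\theta^2\|y\|_2^2\cosh(\theta\|x+sy\|_2)$ follows from $\sinh a\le a\cosh a$. Your worry at $x+sy=0$ is unnecessary: $\cosh(\theta\|z\|_2)=\sum_{k\ge0}\theta^{2k}\|z\|_2^{2k}/(2k)!$ is a power series in $\|z\|_2^2$, hence smooth, so no regularization is needed. These steps give, for every $\eta>0$,
\[
\Pr\bigl(\|S_n\|_2\ge C(\sqrt{V\eta}+R\eta)\bigr)\le 2e^{-(C-e)\eta}.
\]
This proves the lemma for all $\eta\ge\eta_0$, with $C_F$ depending only on the fixed $\eta_0>0$ (e.g.\ $C_F=5$ for $\eta\ge1$). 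It also proves the lemma for all $\eta>0$ if the failure probability is relaxed to $2e^{-\eta}$ (take $C_F=e+1$).

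The gap is your last step for small $\eta$. Chebyshev only gives $\Pr(\|S_n\|_2>r)\le 1/(C^2\eta)$, which exceeds $e^{-\eta}$ once $\eta\lesssim C^{-2}$. The fallback (``$e^{-\eta}$ is close to $1$, adjust $C_F$'') cannot be completed, because the lemma demands a \emph{positive} success probability $1-e^{-\eta}$ for a radius that shrinks to $0$ with $\eta$. In fact the statement as written is false in that regime. Take $n=1$ and $X_1=\varepsilon v$, with $\varepsilon$ a Rademacher sign and $\|v\|_2=1$, so $R=V=1$ and $\|S_1\|_2=1$ almost surely. For any $C_F$ and any $\eta$ with $C_F(\sqrt\eta+\eta)<1$, the event $\{\|S_1\|_2\le C_F(\sqrt{V\eta}+R\eta)\}$ is empty, while the lemma asserts it has probability at least $1-e^{-\eta}>0$.

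So rather than patching small $\eta$, state the conclusion for $\eta\ge\eta_0$, or with failure probability $2e^{-\eta}$. This is harmless for the paper, which applies the lemma only with $\eta=\lambda=\log(2T/\delta)\ge\log 8$. In that range your bound gives $2e^{-(C-e)\lambda}\le e^{-\lambda}$ as soon as $C\ge e+4/3$.
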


\subsection{Stopped weighted fluctuation bound}

We first control the stochastic fluctuation accumulated by the momentum
recursion. The stopped process \(z_t\) is a geometrically weighted sum of
centered batch errors on the localized event.

\begin{lemma}[Stopped weighted fluctuation]
\label{lem:momentum-stopped-fluctuation}
If \(C_U\) is sufficiently large, then for every \(t\in\{1,\ldots,T-1\}\),
\[
  \Pr(\|z_t\|_2>U)\le\frac{\delta}{2T}.
\]
Consequently, the event
\[
  \mathcal E
  :=
  \left\{
    \max_{1\le t\le T-1}\|z_t\|_2\le U
  \right\}
\]
satisfies \(\Pr(\mathcal E)\ge1-\delta/2\).
\end{lemma}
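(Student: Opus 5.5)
Fix $t\in\{1,\dots,T-1\}$. The plan is to write $z_t$ as a vector martingale sum and apply the vector Freedman inequality (Lemma~\ref{lem:momentum-freedman}) to it. To make the batch structure explicit, refine the filtration to the level of individual samples. Order the pairs $(s,\ell)$, $1\le s\le t$, $1\le\ell\le M$, lexicographically, and define
\[
X_{s,\ell}:=\beta^{t-s}(1-\beta)\mathbf 1_{\mathcal L_s}W_{s,\ell},
\qquad
W_{s,\ell}:=\frac dM\bigl(\psi_\tau(Y_{s,\ell})u_{s,\ell}-\E[\psi_\tau(Y_{s,\ell})u_{s,\ell}\mid\mathcal F_{s-1}]\bigr).
\]
Here $\mathcal H_{(s,\ell)}$ is generated by $\mathcal F_{s-1}$ together with the first $\ell$ samples of batch $s$.

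Then $a_s=\sum_\ell W_{s,\ell}$, so $z_t=\sum_{s,\ell}X_{s,\ell}$. Since $\mathcal L_s\in\mathcal F_{s-1}$, and the samples within a batch are i.i.d. and independent of $\mathcal F_{s-1}$, each $X_{s,\ell}$ has zero conditional mean given the preceding $\sigma$-algebra. Hence $(X_{s,\ell})$ is a martingale-difference sequence.

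\textbf{Almost-sure and variance bounds.} On $\mathcal L_s$ we have $f(x_s)-f_*\le4\bar\Delta_0$. Moreover $\tau\ge8S_\mu$, because $M\ge(1-\beta)\lambda$. So Lemma~\ref{lem:momentum-onestep-estimator} applies with $\mathcal H=\mathcal F_{s-1}$. It gives
\[
\|X_{s,\ell}\|_2\le(1-\beta)\frac{2d\tau}{M}=:R,
\]
using $\beta^{t-s}\le1$. The indicator is essential here: off $\mathcal L_s$ the term vanishes, so no tail control is needed outside the localized region. For the predictable quadratic variation, sum the per-batch conditional second-moment bound with weights $\beta^{2(t-s)}(1-\beta)^2$:
\[
V:=\sum_{s=1}^t\beta^{2(t-s)}(1-\beta)^2\,C_{\rm var}\frac{d^2}{M}S_\mu^p\tau^{2-p}\Bigl(1+\log\frac{\tau}{S_\mu}\Bigr)
\le(1-\beta)\,C_{\rm var}\frac{d^2}{M}S_\mu^p\tau^{2-p}\Bigl(1+\log\frac{\tau}{S_\mu}\Bigr),
\]
using $\sum_{k\ge0}\beta^{2k}\le1/(1-\beta)$. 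This bound holds almost surely, as the freedom inequality requires.

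\textbf{Substituting the threshold.} Apply Lemma~\ref{lem:momentum-freedman} with $\eta=\lambda=\log(2T/\delta)$, so the failure probability is $e^{-\eta}=\delta/(2T)$. Then insert $\tau=8S_\mu q^{-1/p}$ with $q=(1-\beta)\lambda/M$. We get $\tau^{2-p}=8^{2-p}S_\mu^{2-p}q^{-(2-p)/p}$, hence
\[
\sqrt{V\lambda}\lesssim dS_\mu\sqrt{\frac{(1-\beta)\lambda}{M}}\,q^{-\frac{2-p}{2p}}\sqrt{1+\log(\tau/S_\mu)}
= dS_\mu\,q^{\frac12-\frac{2-p}{2p}}\sqrt{\cdots}
= dS_\mu\,q^{\frac{p-1}{p}}\sqrt{\cdots}.
\]
The remaining factor satisfies $1+\log(\tau/S_\mu)=1+\log8+\frac1p\log(1/q)\lesssim1+\log\frac{M}{(1-\beta)\lambda}$, since $q\le1$. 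Similarly,
\[
R\lambda=2d\tau q=16\,dS_\mu\,q^{1-1/p}=16\,dS_\mu\,q^{\frac{p-1}{p}}.
\]
Both terms are therefore bounded by a constant multiple of $U$, so choosing $C_U$ large enough (relative to $C_F$ and $C_{\rm var}$) gives $\Pr(\|z_t\|_2>U)\le\delta/(2T)$.

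\textbf{Conclusion and main difficulty.} A union bound over the $T-1$ values of $t$ gives $\Pr(\mathcal E^c)\le(T-1)\delta/(2T)<\delta/2$. The main obstacle is the measurability bookkeeping. The random indicator $\mathbf 1_{\mathcal L_s}$ and the random point $x_s$ must be predictable, so that the conditional variance bound from Lemma~\ref{lem:momentum-onestep-estimator} holds almost surely rather than only on the good event. Freedman's inequality needs a deterministic bound $V$, and the stopping via $\mathcal L_s\in\mathcal F_{s-1}$ is exactly what provides it.
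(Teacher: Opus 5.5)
Your proof is correct and follows the paper's argument essentially step for step. Both use the lexicographic sample-level filtration and the stopped martingale-difference array $\beta^{t-s}(1-\beta)\mathbf 1_{\mathcal L_s}W_{s,\ell}$, with $R=2d\tau(1-\beta)/M$ and $V$ bounded via $(1-\beta)^2\sum_j\beta^{2j}\le 1-\beta$. Both then apply Freedman with $\eta=\lambda$, simplify to $R\lambda=16\,dS_\mu q^{(p-1)/p}$ and $\sqrt{V\lambda}\lesssim dS_\mu q^{(p-1)/p}\sqrt{1+\log\frac{M}{(1-\beta)\lambda}}$, and finish with a union bound over $t$.
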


\begin{proof}
Fix \(t\in\{1,\ldots,T-1\}\). For \(s\le t\), set
\(Z_{s,\ell}:=\psi_\tau(Y_{s,\ell})u_{s,\ell}\),
\(\bar Z_s:=\E[Z_{s,1}\mid\mathcal F_{s-1}]\), and
\(W_{s,\ell}:=(d/M)(Z_{s,\ell}-\bar Z_s)\). Then
\(a_s=\sum_{\ell=1}^M W_{s,\ell}\), and therefore
\[
  z_t
  =
  \sum_{s=1}^t
  \sum_{\ell=1}^M
  \beta^{t-s}(1-\beta)
  \mathbf 1_{\mathcal L_s}
  W_{s,\ell}.
\]
Reveal the samples \((s,\ell)\) in lexicographic order and define
\[
  \mathcal H_{s,\ell}
  :=
  \sigma\!\left(
    \mathcal F_{s-1},
    \{(u_{s,j},\xi_{s,j}):1\le j\le\ell\}
  \right),
  \qquad
  \mathcal H_{s,0}:=\mathcal F_{s-1}.
\]
Since \(\mathbf 1_{\mathcal L_s}\) is \(\mathcal F_{s-1}\)-measurable and
\(\E[W_{s,\ell}\mid\mathcal H_{s,\ell-1}]=0\), the variables
\[
  X_{s,\ell}^{(t)}
  :=
  \beta^{t-s}(1-\beta)\mathbf 1_{\mathcal L_s}W_{s,\ell}
\]
form a Hilbert-valued martingale-difference array. On \(\mathcal L_s\),
Lemma~\ref{lem:momentum-onestep-estimator} gives
\[
  \|X_{s,\ell}^{(t)}\|_2
  \le
  \frac{2d\tau(1-\beta)}{M}
  =:R,
\]
and
\[
\begin{aligned}
  \sum_{s=1}^t\sum_{\ell=1}^M
  \E[\|X_{s,\ell}^{(t)}\|_2^2\mid\mathcal H_{s,\ell-1}]
  &\le
  C_{\rm var}
  \frac{d^2(1-\beta)}{M}
  S_\mu^p\tau^{2-p}
  \left(1+\log\frac{\tau}{S_\mu}\right)
  =:V .
\end{aligned}
\]
Here we used
\((1-\beta)^2\sum_{j=0}^{t-1}\beta^{2j}\le1-\beta\). Applying
Lemma~\ref{lem:momentum-freedman} with \(\eta=\lambda\), with probability at
least \(1-e^{-\lambda}=1-\delta/(2T)\),
\[
  \|z_t\|_2
  \le
  C_F(\sqrt{V\lambda}+R\lambda).
\]
It remains to simplify the two terms. Since
\(\tau/S_\mu=8q^{-1/p}\) and \(q=(1-\beta)\lambda/M\le1\),
\[
  R\lambda
  =
  \frac{2d\tau(1-\beta)\lambda}{M}
  =
  16dS_\mu q^{\frac{p-1}{p}},
\]
and
\[
  \sqrt{V\lambda}
  \le
  C dS_\mu q^{\frac{p-1}{p}}
  \sqrt{
    1+\log\frac{M}{(1-\beta)\lambda}
  }.
\]
Choosing \(C_U\) sufficiently large gives
\(\|z_t\|_2\le U\) with probability at least \(1-\delta/(2T)\). A union bound
over \(t=1,\ldots,T-1\) gives \(\Pr(\mathcal E)\ge1-\delta/2\).
\end{proof}

\subsection{Bias and drift bounds}

\begin{lemma}[One-step clipping bias]
\label{lem:momentum-onestep-bias}
On \(\mathcal L_t\), for every \(t\ge1\), \(\|b_t\|_2\le B\). Consequently,
on \(\mathcal L_t\),
\[
  \left\|
    \sum_{s=1}^t
    \beta^{t-s}(1-\beta)b_s
  \right\|_2
  \le B.
\]
\end{lemma}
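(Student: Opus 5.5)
On \(\mathcal L_t\) we have \(f(x_t)-f_*\le 4\bar\Delta_0\), and \(x_t\) is \(\mathcal F_{t-1}\)-measurable. The running batch at time \(t\) is fresh and conditionally independent of \(\mathcal F_{t-1}\). The plan is to apply Lemma~\ref{lem:momentum-onestep-estimator} with \(\mathcal H=\mathcal F_{t-1}\) and \(x=x_t\), for which \(\bar g=\E[g_t\mid\mathcal F_{t-1}]\) and \(b_t=\bar g-\nabla f_\mu(x_t)\).

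\emph{Step 1: check the threshold.} Since \(M\ge(1-\beta)\lambda\), we have \(q\le 1\). Hence \(\tau=8S_\mu q^{-1/p}\ge 8S_\mu\), and the lemma applies.

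\emph{Step 2: evaluate the bias bound.} The lemma gives
\[
\|b_t\|_2\le C_{\rm bias}\,dS_\mu^p\tau^{1-p}.
\]
Substituting \(\tau^{1-p}=8^{1-p}S_\mu^{1-p}q^{-(1-p)/p}=8^{1-p}S_\mu^{1-p}q^{\frac{p-1}{p}}\) yields
\[
\|b_t\|_2\le C_{\rm bias}8^{1-p}\,dS_\mu q^{\frac{p-1}{p}}\le B,
\]
provided \(C_B\ge C_{\rm bias}\). This is allowed because \(C_B\) is a sufficiently large absolute constant. Alternatively, Lemma~\ref{lem:hp-shared-clipping-bias} gives the explicit constant \(\frac{2^{2p+1}}{p-1}8^{1-p}=\frac{2^{4-p}}{p-1}\). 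That constant depends on \(p\), which is consistent with the \(\widetilde\Theta\) convention for \(p\)-dependent constants.

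\emph{Step 3: the weighted sum.} The events are nested: \(\mathcal L_t\subseteq\mathcal L_s\) for \(s\le t\). So on \(\mathcal L_t\), Step 2 gives \(\|b_s\|_2\le B\) for every \(1\le s\le t\). By the triangle inequality,
\[
\Bigl\|\sum_{s=1}^t\beta^{t-s}(1-\beta)b_s\Bigr\|_2\le B(1-\beta)\sum_{j=0}^{t-1}\beta^j=B(1-\beta^t)\le B.
\]

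The only delicate point is measurability. The bound in Step 2 is an \(\mathcal F_{t-1}\)-conditional statement that holds pointwise on \(\mathcal L_t\in\mathcal F_{t-1}\). I would therefore phrase it as \(\mathbf 1_{\mathcal L_t}\|b_t\|_2\le B\) almost surely. To get this, apply the lemma to a regular conditional distribution given \(\mathcal F_{t-1}\), under which \(x_t\) is a fixed point satisfying the localization condition. This is the same conditioning argument already used in the proof of Theorem~\ref{thm:hp-shared-clipped}.
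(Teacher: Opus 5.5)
Your proof is correct and follows essentially the same route as the paper: apply Lemma~\ref{lem:momentum-onestep-estimator} at each $x_s$, $s\le t$ (using $\mathcal L_t\subseteq\mathcal L_s$), substitute $\tau=8S_\mu q^{-1/p}$ to get $C_{\rm bias}8^{1-p}dS_\mu q^{(p-1)/p}\le B$, and bound the weighted sum using $\sum_{s=1}^t\beta^{t-s}(1-\beta)\le 1$. The paper leaves several details implicit that you make explicit, without changing the argument: the check that $q\le 1$ ensures $\tau\ge 8S_\mu$, the conditioning on $\mathcal F_{t-1}$ on the event $\mathcal L_t$, and your observation that the explicit constant $2^{4-p}/(p-1)$ depends on $p$, so ``absolute'' really means $p$-dependent.
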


\begin{proof}
On \(\mathcal L_t\), each \(x_s\), \(1\le s\le t\), belongs to the localized
region. Lemma~\ref{lem:momentum-onestep-estimator} gives
\[
  \|b_s\|_2
  \le
  C_{\rm bias}dS_\mu^p\tau^{1-p}
  \le
  C dS_\mu q^{\frac{p-1}{p}}
  \le B,
\]
where we used \(\tau=8S_\mu q^{-1/p}\) and chose \(C_B\) sufficiently large.
The weighted bound follows from
\(\sum_{s=1}^t\beta^{t-s}(1-\beta)\le1\).
\end{proof}

\begin{lemma}[Drift recurrence]
\label{lem:momentum-drift-recurrence}
For every \(t\ge2\),
\[
  u_t
  =
  \beta u_{t-1}
  +(1-\beta)b_t
  +r_t,
  \qquad
  r_t
  :=
  \beta(\nabla f_\mu(x_{t-1})-\nabla f_\mu(x_t)).
\]
Moreover, if \(\beta\in[1/2,1)\), then on \(\mathcal L_t\),
\[
  \|u_t\|_2^2
  \le
  \frac{1+\beta}{2}\|u_{t-1}\|_2^2
  +
  2(1-\beta)B^2
  +
  \frac{3\beta^2L^2\alpha^2}{1-\beta}
  \|m_{t-1}\|_2^2.
\]
\end{lemma}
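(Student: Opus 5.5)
The plan is to derive the recurrence directly from the definitions, and then obtain the squared bound with a single weighted Young inequality whose weight is tuned so that the constants come out exactly as stated.

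\emph{Step 1: the identity.} Fix \(t\ge2\). Using \(m_t=\beta m_{t-1}+(1-\beta)g_t\) and \(g_t=\nabla f_\mu(x_t)+a_t+b_t\), I will write
\[
e_t=m_t-\nabla f_\mu(x_t)
=\beta\bigl(e_{t-1}+\nabla f_\mu(x_{t-1})\bigr)+(1-\beta)\bigl(\nabla f_\mu(x_t)+a_t+b_t\bigr)-\nabla f_\mu(x_t).
\]
The \(\nabla f_\mu(x_t)\) terms combine to \(-\beta\nabla f_\mu(x_t)\). This gives
\[
e_t=\beta e_{t-1}+(1-\beta)a_t+(1-\beta)b_t+r_t.
\]
From its definition, \(\widehat z_t=\beta\widehat z_{t-1}+(1-\beta)a_t\) for \(t\ge2\). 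Subtracting this from the previous display removes the fluctuation term \(a_t\) and yields \(u_t=\beta u_{t-1}+(1-\beta)b_t+r_t\), as claimed.

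\emph{Step 2: bounding the two forcing terms.} On \(\mathcal L_t\), the iterate \(x_t\) lies in the localized region. Hence Lemma~\ref{lem:momentum-onestep-bias} gives \(\|b_t\|_2\le B\). For \(r_t\), I use that \(f_\mu\) is \(L\)-smooth (as noted in the proof of Proposition~\ref{prop:hp-shared-onestep}) together with \(x_t-x_{t-1}=-\alpha m_{t-1}\). This gives
\[
\|r_t\|_2\le\beta L\alpha\|m_{t-1}\|_2 .
\]
Setting \(w_t:=(1-\beta)b_t+r_t\), we then have
\[
\|w_t\|_2^2\le2(1-\beta)^2B^2+2\beta^2L^2\alpha^2\|m_{t-1}\|_2^2 .
\]

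\emph{Step 3: the contraction.} I will apply \(\|a+b\|^2\le(1+c)\|a\|^2+(1+1/c)\|b\|^2\) with \(c=(1-\beta)/\beta\). This is the step where the constants must be matched, and this choice makes it exact. With it, \((1+c)\beta^2=\beta\) and \(1+1/c=1/(1-\beta)\), so
\[
\|u_t\|_2^2\le\beta\|u_{t-1}\|_2^2+\frac{1}{1-\beta}\|w_t\|_2^2
\le\beta\|u_{t-1}\|_2^2+2(1-\beta)B^2+\frac{2\beta^2L^2\alpha^2}{1-\beta}\|m_{t-1}\|_2^2 .
\]
Finally, \(\beta\le(1+\beta)/2\) and \(2\le3\) give the stated inequality. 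No real obstacle is expected here. The only point requiring care is that Lemma~\ref{lem:momentum-onestep-bias} is invoked on the event \(\mathcal L_t\), which is exactly the event on which the claim is asserted. The hypothesis \(\beta\in[1/2,1)\) merely ensures that \(c>0\) is well defined and is otherwise unused.
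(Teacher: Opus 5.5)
Your proof is correct and takes essentially the same route as the paper: the same identity from subtracting \(\widehat z_t=\beta\widehat z_{t-1}+(1-\beta)a_t\) from the recursion for \(e_t\), the same bounds \(\|b_t\|_2\le B\) on \(\mathcal L_t\) and \(\|r_t\|_2\le\beta L\alpha\|m_{t-1}\|_2\), and a single weighted Young inequality. The only difference is the grouping in that last step. The paper treats \(\beta u_{t-1}+(1-\beta)b_t\) as a convex combination and peels off \(r_t\) with weight \((1-\beta)/(2\beta)\), using \(\beta\ge 1/2\) to tidy the constants; you instead peel off \((1-\beta)b_t+r_t\) from \(\beta u_{t-1}\) with weight \((1-\beta)/\beta\), which gives the slightly sharper factor \(\beta\) and constant \(2\), and holds for every \(\beta\in(0,1)\).
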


\begin{proof}
Using \(m_t=\beta m_{t-1}+(1-\beta)g_t\) and
\(g_t=\nabla f_\mu(x_t)+a_t+b_t\), we get
\[
\begin{aligned}
  e_t
  &=
  \beta e_{t-1}
  +(1-\beta)a_t
  +(1-\beta)b_t
  +
  \beta(\nabla f_\mu(x_{t-1})-\nabla f_\mu(x_t)).
\end{aligned}
\]
Since \(\widehat z_t=\beta\widehat z_{t-1}+(1-\beta)a_t\), subtracting gives
the stated recurrence for \(u_t\).

Now work on \(\mathcal L_t\). Let
\(q_t:=\beta u_{t-1}+(1-\beta)b_t\). Since \(\|b_t\|_2\le B\),
\[
  \|q_t\|_2^2
  \le
  \beta\|u_{t-1}\|_2^2+(1-\beta)B^2.
\]
Young's inequality with \(\rho=(1-\beta)/(2\beta)\) gives
\[
  \|q_t+r_t\|_2^2
  \le
  \frac{1+\beta}{2}\|u_{t-1}\|_2^2
  +
  2(1-\beta)B^2
  +
  \frac{3\beta}{1-\beta}\|r_t\|_2^2,
\]
where we used \(\beta\ge1/2\). By \(L\)-smoothness of \(f_\mu\),
\[
  \|r_t\|_2
  \le
  \beta L\|x_t-x_{t-1}\|_2
  =
  \beta L\alpha\|m_{t-1}\|_2.
\]
Thus the last term is at most
\(3\beta^3L^2\alpha^2\|m_{t-1}\|_2^2/(1-\beta)\), which is bounded by the
displayed term since \(\beta<1\). This proves the claim.
\end{proof}

\subsection{Descent and Lyapunov inequality}

\begin{lemma}[Momentum descent]
\label{lem:momentum-descent}
Let \(t\ge1\) and assume \(\alpha\le1/(2L)\). Then
\[
  f_\mu(x_{t+1})
  \le
  f_\mu(x_t)
  -
  \frac\alpha2\|\nabla f_\mu(x_t)\|_2^2
  -
  \frac\alpha4\|m_t\|_2^2
  +
  \alpha\|u_t\|_2^2
  +
  \alpha\|\widehat z_t\|_2^2.
\]
In particular, on \(\mathcal L_t\cap\mathcal E\),
\[
  f_\mu(x_{t+1})
  \le
  f_\mu(x_t)
  -
  \frac\alpha2\|\nabla f_\mu(x_t)\|_2^2
  -
  \frac\alpha4\|m_t\|_2^2
  +
  \alpha\|u_t\|_2^2
  +
  \alpha U^2.
\]
\end{lemma}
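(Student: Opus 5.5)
The plan is to start from the $L$-smoothness of $f_\mu$; it holds because $f$ is $L$-smooth, as in the proof of Proposition~\ref{prop:hp-shared-onestep}. With $x_{t+1}-x_t=-\alpha m_t$ this gives
\[
f_\mu(x_{t+1})\le f_\mu(x_t)-\alpha\ip{\nabla f_\mu(x_t)}{m_t}+\frac{L\alpha^2}{2}\norm{m_t}_2^2 .
\]
Rather than expanding $m_t=\nabla f_\mu(x_t)+e_t$ and bounding $\norm{m_t}^2$ from above (which would lose the $-\frac\alpha4\norm{m_t}^2$ term we need), I will use the polarization identity
\[
-\ip{a}{b}=\tfrac12\norm{a-b}_2^2-\tfrac12\norm{a}_2^2-\tfrac12\norm{b}_2^2
\]
with $a=\nabla f_\mu(x_t)$ and $b=m_t$. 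Since $a-b=-e_t$, this yields
\[
f_\mu(x_{t+1})\le f_\mu(x_t)-\frac\alpha2\norm{\nabla f_\mu(x_t)}_2^2-\Bigl(\frac\alpha2-\frac{L\alpha^2}{2}\Bigr)\norm{m_t}_2^2+\frac\alpha2\norm{e_t}_2^2 .
\]

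Next I use $\alpha\le 1/(2L)$, which gives $L\alpha^2/2\le\alpha/4$. The coefficient of $-\norm{m_t}^2$ is then at least $\alpha/4$.

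For the error term I use the decomposition $e_t=u_t+\widehat z_t$, which is just the definition of $u_t$. Then $\norm{e_t}^2\le 2\norm{u_t}^2+2\norm{\widehat z_t}^2$, and so $\frac\alpha2\norm{e_t}^2\le\alpha\norm{u_t}^2+\alpha\norm{\widehat z_t}^2$. This proves the first, deterministic inequality. It holds for every $t\ge1$, with no localization needed.

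For the second claim I work on $\mathcal L_t\cap\mathcal E$. On $\mathcal L_t$ we have $\mathcal L_s\supseteq\mathcal L_t$ for all $s\le t$, so every indicator $\mathbf 1_{\mathcal L_s}$ with $s\le t$ equals one, and hence $\widehat z_t=z_t$, as noted in the setup. On $\mathcal E$ we have $\norm{z_t}_2\le U$ for $1\le t\le T-1$, by Lemma~\ref{lem:momentum-stopped-fluctuation}. Therefore $\alpha\norm{\widehat z_t}^2\le\alpha U^2$, and substituting this into the first inequality gives the second.

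The only point needing care is the range of $t$: $\mathcal E$ controls $z_t$ only for $t\le T-1$, so the second statement should be read for those $t$. Everything else is routine algebra. The main thing to get right is using the polarization identity instead of Young's inequality, so that the negative $\norm{m_t}^2$ term survives.
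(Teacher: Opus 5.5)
Your proof is correct and follows the paper's argument essentially line for line: smoothness of $f_\mu$, the polarization identity with $a=\nabla f_\mu(x_t)$ and $b=m_t$, the bound $1-L\alpha\ge 1/2$, the split $\|u_t+\widehat z_t\|_2^2\le 2\|u_t\|_2^2+2\|\widehat z_t\|_2^2$, and then $\widehat z_t=z_t$ on $\mathcal L_t$ together with $\|z_t\|_2\le U$ on $\mathcal E$. Your remark that the second claim only makes sense for $t\le T-1$ is accurate, and the paper uses the lemma only in that range.
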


\begin{proof}
By \(L\)-smoothness of \(f_\mu\) and \(x_{t+1}=x_t-\alpha m_t\),
\[
  f_\mu(x_{t+1})
  \le
  f_\mu(x_t)
  -
  \alpha\langle\nabla f_\mu(x_t),m_t\rangle
  +
  \frac{L\alpha^2}{2}\|m_t\|_2^2.
\]
Using
\[
  -\langle a,b\rangle
  =
  \frac12\|a-b\|_2^2
  -
  \frac12\|a\|_2^2
  -
  \frac12\|b\|_2^2
\]
with \(a=\nabla f_\mu(x_t)\), \(b=m_t\), and
\(m_t-\nabla f_\mu(x_t)=u_t+\widehat z_t\), we obtain
\[
  f_\mu(x_{t+1})
  \le
  f_\mu(x_t)
  -
  \frac\alpha2\|\nabla f_\mu(x_t)\|_2^2
  -
  \frac\alpha2(1-L\alpha)\|m_t\|_2^2
  +
  \frac\alpha2\|u_t+\widehat z_t\|_2^2.
\]
Since \(\alpha\le1/(2L)\), \(1-L\alpha\ge1/2\), and
\(\|u_t+\widehat z_t\|_2^2\le2\|u_t\|_2^2+2\|\widehat z_t\|_2^2\). This gives
the first claim. On \(\mathcal L_t\), \(\widehat z_t=z_t\), and on
\(\mathcal E\), \(\|z_t\|_2\le U\), which gives the second claim.
\end{proof}

\begin{theorem}[One-step Lyapunov inequality]
\label{thm:momentum-lyapunov-step}
Assume \(\beta\in[1/2,1)\) and
\[
  \alpha
  \le
  \min
  \left\{
    \frac1{2L},
    \frac{1-\beta}{4\sqrt3\,\beta L}
  \right\}.
\]
Then, on \(\mathcal L_{t+1}\cap\mathcal E\), for every \(t=1,\ldots,T-2\),
\[
  \Phi_{t+1}
  \le
  \Phi_t
  -
  \frac\alpha2\|\nabla f_\mu(x_t)\|_2^2
  -
  \frac\alpha8\|m_t\|_2^2
  +
  \alpha(U^2+4B^2).
\]
\end{theorem}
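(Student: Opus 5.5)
We will obtain the inequality by adding the momentum descent bound of Lemma~\ref{lem:momentum-descent} at time \(t\) to \(\frac{2\alpha}{1-\beta}\) times the drift recurrence of Lemma~\ref{lem:momentum-drift-recurrence} at time \(t+1\). First, the events: since \(\mathcal L_{t+1}\subseteq\mathcal L_t\), on \(\mathcal L_{t+1}\cap\mathcal E\) we may use the second form of Lemma~\ref{lem:momentum-descent}, namely
\[
f_\mu(x_{t+1})\le f_\mu(x_t)-\tfrac\alpha2\|\nabla f_\mu(x_t)\|_2^2-\tfrac\alpha4\|m_t\|_2^2+\alpha\|u_t\|_2^2+\alpha U^2 .
\]
Its hypothesis \(\alpha\le 1/(2L)\) is part of the assumed stepsize condition. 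Because \(t+1\ge2\), Lemma~\ref{lem:momentum-drift-recurrence} applies on \(\mathcal L_{t+1}\) and gives
\[
\|u_{t+1}\|_2^2\le\tfrac{1+\beta}{2}\|u_t\|_2^2+2(1-\beta)B^2+\tfrac{3\beta^2L^2\alpha^2}{1-\beta}\|m_t\|_2^2 .
\]
Note that the momentum term now appears as \(m_t\), which is exactly the quantity controlled by the descent bound.

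Next we multiply the drift inequality by \(\frac{2\alpha}{1-\beta}\) and add it to the descent bound, then collect terms.

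The \(\|u_t\|_2^2\) coefficient is \(\alpha+\frac{2\alpha}{1-\beta}\cdot\frac{1+\beta}{2}=\alpha+\frac{\alpha(1+\beta)}{1-\beta}=\frac{2\alpha}{1-\beta}\). This is exactly the weight of \(\|u_t\|_2^2\) in \(\Phi_t\), so the \(u\)-terms telescope with no slack needed. Here the choice \(\rho=(1-\beta)/(2\beta)\) in the drift lemma is what produces the factor \(\frac{1+\beta}{2}\).

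The bias term contributes \(\frac{2\alpha}{1-\beta}\cdot2(1-\beta)B^2=4\alpha B^2\). Together with \(\alpha U^2\), this gives the additive \(\alpha(U^2+4B^2)\).

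The \(\|m_t\|_2^2\) term is the only place where the stepsize restriction matters, and it is the step to check carefully. The coefficient is \(-\frac\alpha4+\frac{6\beta^2L^2\alpha^3}{(1-\beta)^2}\). We want this to be at most \(-\frac\alpha8\), which requires \(\frac{6\beta^2L^2\alpha^2}{(1-\beta)^2}\le\frac18\), i.e. \(\alpha\le\frac{1-\beta}{4\sqrt3\,\beta L}\). This is exactly the second entry of the minimum in the hypothesis, since \(48=(4\sqrt3)^2\).

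Finally, we rewrite the left side as \(f_\mu(x_{t+1})+\frac{2\alpha}{1-\beta}\|u_{t+1}\|_2^2=\Phi_{t+1}\) and the \(f_\mu(x_t)\) and \(u_t\) terms on the right as \(\Phi_t\). This yields the claimed inequality on \(\mathcal L_{t+1}\cap\mathcal E\) for all \(t=1,\ldots,T-2\). The range of \(t\) ensures that \(t\le T-1\), so \(\mathcal E\) controls \(z_t\) as required.
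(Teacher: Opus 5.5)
Your proposal is correct and follows the paper's proof essentially line by line. Like the paper, you add the second form of Lemma~\ref{lem:momentum-descent} at time $t$ to $\frac{2\alpha}{1-\beta}$ times Lemma~\ref{lem:momentum-drift-recurrence} at time $t+1$, check that the $\|u_t\|_2^2$ terms reassemble exactly into $\Phi_t$ (the paper phrases this as subtracting $\|u_t\|_2^2$ before scaling, which is the same computation), and use $\alpha\le\frac{1-\beta}{4\sqrt3\,\beta L}$ to get $\frac{6\beta^2L^2\alpha^2}{(1-\beta)^2}\le\frac18$.
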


\begin{proof}
Since \(\mathcal L_{t+1}\subseteq\mathcal L_t\),
Lemma~\ref{lem:momentum-descent} gives
\[
  f_\mu(x_{t+1})
  \le
  f_\mu(x_t)
  -
  \frac\alpha2\|\nabla f_\mu(x_t)\|_2^2
  -
  \frac\alpha4\|m_t\|_2^2
  +
  \alpha\|u_t\|_2^2
  +
  \alpha U^2.
\]
Also, Lemma~\ref{lem:momentum-drift-recurrence} applied at time \(t+1\) gives
\[
  \|u_{t+1}\|_2^2
  \le
  \frac{1+\beta}{2}\|u_t\|_2^2
  +
  2(1-\beta)B^2
  +
  \frac{3\beta^2L^2\alpha^2}{1-\beta}\|m_t\|_2^2.
\]
Subtracting \(\|u_t\|_2^2\), multiplying by \(2\alpha/(1-\beta)\), and adding
to the previous display gives
\[
  \Phi_{t+1}
  \le
  \Phi_t
  -
  \frac\alpha2\|\nabla f_\mu(x_t)\|_2^2
  -
  \alpha
  \left(
    \frac14-\frac{6\beta^2L^2\alpha^2}{(1-\beta)^2}
  \right)\|m_t\|_2^2
  +
  \alpha U^2
  +
  4\alpha B^2.
\]
The stepsize condition gives
\(6\beta^2L^2\alpha^2/(1-\beta)^2\le1/8\). Hence the coefficient of
\(\|m_t\|_2^2\) is at least \(1/8\), proving the claim.
\end{proof}

\subsection{Warm start and initialization}

Define
\[
  \mathcal E_0
  :=
  \{\|g_0-\nabla f_\mu(x_0)\|_2\le G\},
  \qquad
  \mathcal I
  :=
  \{\Phi_1-f_*\le2\bar\Delta_0\}.
\]

\begin{lemma}[Warm-start deviation]
\label{lem:momentum-warm-start}
If \(C_G\) is sufficiently large, then
\[
  \Pr(\mathcal E_0)\ge1-\frac\delta2.
\]
\end{lemma}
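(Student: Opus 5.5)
The warm-start deviation bound is a direct instance of the fixed-point deviation analysis of the base method, applied at the deterministic initial point $x_0$ with batch size $M_0$, threshold $\tau_0$, and failure level $\delta/2$. The plan is to reduce to Corollary~\ref{cor:hp-shared-localized} (or, equivalently, to Lemma~\ref{lem:hp-shared-empirical-fluctuation} combined with Lemma~\ref{lem:hp-shared-clipping-bias}) and then check that the resulting bound is dominated by $G$ once $C_G$ is large.

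First, I verify the hypotheses. The point $x_0$ is $\mathcal F_{-1}$-measurable, so it is fixed with respect to the warm-start batch. It also satisfies the localization $f(x_0)-f_*=\Delta_0\le\bar\Delta_0\le 4\bar\Delta_0$. The warm-start samples $(u_{0,\ell},\xi_{0,\ell})_{\ell=1}^{M_0}$ are i.i.d.\ and independent of $x_0$. Moreover, $M_0\ge\lambda_0$ gives $\tau_0=8S_\mu(M_0/\lambda_0)^{1/p}\ge 8S_\mu\ge 4S_\mu$.

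Next, I apply Lemma~\ref{lem:hp-shared-empirical-fluctuation} with $\rho=\delta/2$, so that $\log(1/\rho)=\lambda_0$. On an event of probability at least $1-\delta/2$, this bounds $\|g_0-\E g_0\|_2$. Lemma~\ref{lem:hp-shared-clipping-bias} deterministically bounds the bias $\|\E g_0-\nabla f_\mu(x_0)\|_2$.

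I then substitute $\tau_0=8S_\mu q_0^{-1/p}$ with $q_0=\lambda_0/M_0\le 1$, exactly as in the proof of Corollary~\ref{cor:hp-shared-localized-tuned}. The variance term becomes $\lesssim dS_\mu q_0^{(p-1)/p}\sqrt{1+\log(M_0/\lambda_0)}$. Here I use $1+\sqrt{2\lambda_0}\lesssim\sqrt{\lambda_0}$ and $1+\log(8q_0^{-1/p})\le(1+\log 8)(1+\log(M_0/\lambda_0))$. The linear term equals $\tfrac{64}{3}dS_\mu q_0^{(p-1)/p}$. The bias term equals $\tfrac{2^{4-p}}{p-1}dS_\mu q_0^{(p-1)/p}$.

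Summing the three terms and choosing $C_G\ge 120+2^{4-p}/(p-1)$ gives $\|g_0-\nabla f_\mu(x_0)\|_2\le G$ on that event, hence $\Pr(\mathcal E_0)\ge 1-\delta/2$.

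The only step needing care is the regime $\lambda_0=\log(2/\delta)<1$, which happens when $\delta>2/e$. In that regime $1+\sqrt{2\lambda_0}\le(1+\sqrt2)\sqrt{\lambda_0}$ fails. I would handle it by bounding $1+\sqrt{2\lambda_0}\le(1+\sqrt2)\max\{1,\sqrt{\lambda_0}\}$. Since $\lambda_0\ge\log 2$, we have $\lambda_0^{-(2-p)/(2p)}\max\{1,\sqrt{\lambda_0}\}\le c\,\lambda_0^{(p-1)/p}$ for an absolute constant $c$, and the extra factor is absorbed into $C_G$. With $C_G$ allowed to depend on $p$, this is the same constant already present in $C_p$.
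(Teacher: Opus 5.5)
Your proposal is correct and follows the paper's proof: the paper invokes Corollary~\ref{cor:hp-shared-localized} at the deterministic point $x_0$ (which satisfies $f(x_0)-f_*\le 4\bar\Delta_0$), with batch size $M_0$, threshold $\tau_0$, and failure probability $\delta/2$, so that $\lambda_0=\log(2/\delta)$ plays the role of $\lambda$; it then absorbs the substitution of $\tau_0$ into ``increasing $C_G$ if necessary.'' Your explicit treatment of the regime $\lambda_0\in[\log 2,1)$ is the reason the tuned Corollary~\ref{cor:hp-shared-localized-tuned} (which requires $T\ge 3$) cannot be quoted verbatim, and it fills in a detail the paper leaves implicit; there the first term picks up an extra factor of up to $(\log 2)^{-1/2}$, so your constant is slightly larger than $C_p$ rather than equal to it, which is harmless since only a sufficiently large, $p$-dependent $C_G$ is needed.
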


\begin{proof}
The point \(x_0\) satisfies \(f(x_0)-f_*\le\bar\Delta_0\le4\bar\Delta_0\).
Applying the localized deviation bound from
Corollary~\ref{cor:hp-shared-localized}, with failure probability \(\delta/2\),
batch size \(M_0\), and threshold
\[
  \tau_0
  =
  8S_\mu
  \left(\frac{M_0}{\lambda_0}\right)^{1/p},
\]
gives the claim after increasing \(C_G\) if necessary.
\end{proof}

\begin{lemma}[Initialization]
\label{lem:momentum-initialization}
Assume \(\alpha\le1/(4L)\). If
\[
  \frac{5\alpha}{4}G^2
  +
  \frac{2\alpha}{1-\beta}
  \left(
    \beta((1+L\alpha)G+L\alpha H_\mu)
    +(1-\beta)B
  \right)^2
  \le \bar\Delta_0,
  \tag{Init}
  \label{eq:momentum-init-condition}
\]
then \(\mathcal E_0\subseteq\mathcal I\). Consequently,
\(\Pr(\mathcal I)\ge1-\delta/2\).
\end{lemma}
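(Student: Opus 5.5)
Our goal is to show that on $\mathcal E_0$ we have $\Phi_1-f_*\le 2\bar\Delta_0$. Recall $\Phi_1=f_\mu(x_1)+\frac{2\alpha}{1-\beta}\|u_1\|_2^2$. Once this inclusion holds, the probability statement follows at once from Lemma~\ref{lem:momentum-warm-start}. We therefore fix an outcome in $\mathcal E_0$, so that $\|g_0-\nabla f_\mu(x_0)\|_2\le G$. We bound the two parts of $\Phi_1$ separately and then use (Init).

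\emph{Function-value part.} The step $x_1=x_0-\alpha g_0$ is exactly the base update. Since $\alpha\le 1/(4L)$, Proposition~\ref{prop:hp-shared-onestep} applies with $e(x_0)=g_0-\nabla f_\mu(x_0)$ and gives
\[
f_\mu(x_1)\le f_\mu(x_0)-\tfrac{\alpha}{2}\|\nabla f_\mu(x_0)\|_2^2+\tfrac{5\alpha}{4}G^2 .
\]
We drop the negative term. By Lemma~\ref{lem:hp-shared-smoothing-bias}, $f_\mu(x_0)-f_*\le \bar\Delta_0$, and hence $f_\mu(x_1)-f_*\le\bar\Delta_0+\frac{5\alpha}{4}G^2$.

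\emph{Tracking part.} We need $u_1=e_1-\widehat z_1$, where $e_1=m_1-\nabla f_\mu(x_1)$ and $\widehat z_1=(1-\beta)a_1$. We write $m_1=\beta m_0+(1-\beta)g_1$ and $g_1=\nabla f_\mu(x_1)+a_1+b_1$. Subtracting $\widehat z_1$ gives
\[
u_1=\beta\bigl(g_0-\nabla f_\mu(x_1)\bigr)+(1-\beta)b_1 .
\]
We then split
\[
g_0-\nabla f_\mu(x_1)=\bigl(g_0-\nabla f_\mu(x_0)\bigr)+\bigl(\nabla f_\mu(x_0)-\nabla f_\mu(x_1)\bigr).
\]
The first piece has norm at most $G$. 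By $L$-smoothness of $f_\mu$, the second has norm at most $L\alpha\|g_0\|_2\le L\alpha\bigl(G+\|\nabla f_\mu(x_0)\|_2\bigr)$. Lemma~\ref{lem:hp-shared-grad-subopt} and Lemma~\ref{lem:hp-shared-smoothing-bias} give $\|\nabla f_\mu(x_0)\|_2\le\|\nabla f(x_0)\|_2+L\mu\le\sqrt{2L\Delta_0}+L\mu=H_\mu$. Together these yield $\|g_0-\nabla f_\mu(x_1)\|_2\le(1+L\alpha)G+L\alpha H_\mu$. For the bias, Lemma~\ref{lem:momentum-onestep-bias} gives $\|b_1\|_2\le B$, provided $x_1$ is localized; this point is discussed below. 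Combining,
\[
\|u_1\|_2\le\beta\bigl((1+L\alpha)G+L\alpha H_\mu\bigr)+(1-\beta)B .
\]
Adding $\frac{2\alpha}{1-\beta}\|u_1\|_2^2$ to the function-value bound and applying (Init) gives $\Phi_1-f_*\le 2\bar\Delta_0$.

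\emph{Main obstacle.} The difficulty is justifying $\|b_1\|_2\le B$. Lemma~\ref{lem:momentum-onestep-bias} (via Lemma~\ref{lem:momentum-onestep-estimator}) requires $f(x_1)-f_*\le 4\bar\Delta_0$. We obtain this from the function-value bound already proved, which is deterministic on $\mathcal E_0$:
\[
f(x_1)-f_*\le f_\mu(x_1)-f_*+\tfrac{L\mu^2}{2}\le \bar\Delta_0+\tfrac{5\alpha}{4}G^2+\bar\Delta_0\le 3\bar\Delta_0 ,
\]
where the last step uses (Init) to get $\frac{5\alpha}{4}G^2\le\bar\Delta_0$. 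Thus $\mathcal L_1$ holds on $\mathcal E_0$. Since $b_1$ is the $\mathcal F_0$-conditional bias at the $\mathcal F_0$-measurable point $x_1$, the bias bound then applies with $\tau=8S_\mu q^{-1/p}$.
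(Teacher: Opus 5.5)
Your proposal is correct and follows the paper's proof essentially step for step: the one-step descent bound for $f_\mu(x_1)$, localization of $x_1$ via (Init) to justify $\|b_1\|_2\le B$, the identity $u_1=\beta(g_0-\nabla f_\mu(x_1))+(1-\beta)b_1$ with the same $(1+L\alpha)G+L\alpha H_\mu$ bound, and the probability bound from Lemma~\ref{lem:momentum-warm-start}. The only difference is presentational: the paper establishes $\mathcal L_1$ before bounding $u_1$, whereas you defer that justification to the end.
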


\begin{proof}
Work on \(\mathcal E_0\). Since \(x_1=x_0-\alpha g_0\), the one-step descent
inequality for \(f_\mu\) gives
\[
  f_\mu(x_1)
  \le
  f_\mu(x_0)
  -
  \frac\alpha2\|\nabla f_\mu(x_0)\|_2^2
  +
  \frac{5\alpha}{4}\|g_0-\nabla f_\mu(x_0)\|_2^2.
\]
Thus \(f_\mu(x_1)-f_*\le\bar\Delta_0+(5\alpha/4)G^2\). By
\eqref{eq:momentum-init-condition}, this is at most \(2\bar\Delta_0\). Hence
\[
  f(x_1)-f_*
  \le
  f_\mu(x_1)-f_*+\frac{L\mu^2}{2}
  \le
  3\bar\Delta_0
  <
  4\bar\Delta_0,
\]
so \(\mathcal L_1\) holds on \(\mathcal E_0\).

By smoothness, \(\|\nabla f(x_0)\|_2^2\le2L\Delta_0\). Since
\(\|\nabla f_\mu(x_0)-\nabla f(x_0)\|_2\le L\mu\), we have
\(\|\nabla f_\mu(x_0)\|_2\le H_\mu\), and therefore
\(\|g_0\|_2\le H_\mu+G\). Moreover,
\[
  \|g_0-\nabla f_\mu(x_1)\|_2
  \le
  G+L\alpha\|g_0\|_2
  \le
  (1+L\alpha)G+L\alpha H_\mu.
\]
Since \(m_1=\beta g_0+(1-\beta)g_1\),
\[
  u_1
  =
  m_1-\nabla f_\mu(x_1)-(1-\beta)a_1
  =
  \beta(g_0-\nabla f_\mu(x_1))+(1-\beta)b_1.
\]
Because \(\mathcal L_1\) holds, Lemma~\ref{lem:momentum-onestep-bias} gives
\(\|b_1\|_2\le B\), and hence
\[
  \|u_1\|_2
  \le
  \beta((1+L\alpha)G+L\alpha H_\mu)
  +(1-\beta)B.
\]
Combining the last bounds,
\[
\begin{aligned}
  \Phi_1-f_*
  &\le
  \bar\Delta_0+\frac{5\alpha}{4}G^2
  +
  \frac{2\alpha}{1-\beta}
  \left(
    \beta((1+L\alpha)G+L\alpha H_\mu)
    +(1-\beta)B
  \right)^2
  \le
  2\bar\Delta_0.
\end{aligned}
\]
Thus \(\mathcal E_0\subseteq\mathcal I\). The probability bound follows from
Lemma~\ref{lem:momentum-warm-start}.
\end{proof}

\subsection{Localized convergence}

\begin{theorem}[Localized high-probability convergence for momentum RSC-ZO]
\label{thm:momentum-localized-convergence}
Assume the hypotheses of Lemma~\ref{lem:momentum-initialization} and
Theorem~\ref{thm:momentum-lyapunov-step}. Suppose moreover that
\[
  (T-1)\alpha(U^2+4B^2)\le\bar\Delta_0.
  \tag{Loc}
  \label{eq:momentum-localization-condition}
\]
Then, with probability at least \(1-\delta\),
\[
  f(x_t)-f_*\le4\bar\Delta_0
  \qquad
  \text{for all }t=1,\ldots,T-1,
\]
and
\[
  \frac1{T-2}
  \sum_{t=1}^{T-2}\|\nabla f(x_t)\|_2^2
  \le
  \frac{8\bar\Delta_0}{\alpha(T-2)}
  +
  4U^2
  +
  16B^2
  +
  2L^2\mu^2.
\]
\end{theorem}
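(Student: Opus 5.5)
We work on the intersection $\mathcal G:=\mathcal E\cap\mathcal I$. By Lemma~\ref{lem:momentum-stopped-fluctuation} and Lemma~\ref{lem:momentum-initialization}, $\Pr(\mathcal G)\ge 1-\delta$. On $\mathcal G$ the argument is deterministic: a first-exit induction establishes localization, and then the Lyapunov inequality is telescoped.

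\textbf{Step 1: localization.} We show by induction on $k=1,\ldots,T-1$ that on $\mathcal G$ the event $\mathcal L_k$ holds and
\[
\Phi_k-f_*\le 2\bar\Delta_0+(k-1)\alpha(U^2+4B^2).
\]

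For the base case $k=1$, $f(x_0)-f_*=\Delta_0\le 4\bar\Delta_0$. The proof of Lemma~\ref{lem:momentum-initialization} shows that $f(x_1)-f_*\le 3\bar\Delta_0$ on $\mathcal E_0\supseteq$ the relevant event, and $\mathcal I$ gives $\Phi_1-f_*\le 2\bar\Delta_0$.

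For the inductive step, assume the claim holds at $k\le T-2$. Theorem~\ref{thm:momentum-lyapunov-step} needs $\mathcal L_{k+1}$, which depends on $x_{k+1}$, so it cannot be invoked yet. Instead we use Lemma~\ref{lem:momentum-descent} on $\mathcal L_k\cap\mathcal E$, drop the negative terms, and bound
\[
f_\mu(x_{k+1})\le f_\mu(x_k)+\alpha\|u_k\|_2^2+\alpha U^2\le \Phi_k+\alpha U^2,
\]
using $\alpha\le \tfrac{2\alpha}{1-\beta}$. Together with (Loc) this gives
\[
f_\mu(x_{k+1})-f_*\le 2\bar\Delta_0+\bar\Delta_0=3\bar\Delta_0 .
\]
Lemma~\ref{lem:hp-shared-smoothing-bias} then yields $f(x_{k+1})-f_*\le 3\bar\Delta_0+L\mu^2/2\le 4\bar\Delta_0$, so $\mathcal L_{k+1}$ holds. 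With $\mathcal L_{k+1}\cap\mathcal E$ established, Theorem~\ref{thm:momentum-lyapunov-step} applies and propagates the $\Phi$ bound, after dropping its negative terms. (Loc) guarantees $\Phi_k-f_*\le 3\bar\Delta_0$ throughout.

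The main obstacle is this circularity: Lemma~\ref{lem:momentum-drift-recurrence} at time $k+1$ requires $\mathcal L_{k+1}$. The plan is to resolve it as above, via the descent lemma alone, which needs only $\mathcal L_k$. If the budget proves tight, (Init) could be sharpened, but the constants $2+1$ plus $1/2$ fit under $4$.

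\textbf{Step 2: stationarity.} Since $\mathcal L_{t+1}$ holds for all $t\le T-2$, we sum Theorem~\ref{thm:momentum-lyapunov-step} over $t=1,\ldots,T-2$. Using $\Phi_{T-1}\ge f_\mu(x_{T-1})\ge f_*-L\mu^2/2$ and $\Phi_1-f_*\le 2\bar\Delta_0$, we get
\[
\frac{\alpha}{2}\sum_{t=1}^{T-2}\|\nabla f_\mu(x_t)\|_2^2\le 2\bar\Delta_0+\tfrac{L\mu^2}{2}+(T-2)\alpha(U^2+4B^2).
\]
Since $\bar\Delta_0\ge L\mu^2/2$, the right-hand side is at most $3\bar\Delta_0+(T-2)\alpha(U^2+4B^2)$. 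Dividing by $\alpha(T-2)/2$ gives
\[
\frac1{T-2}\sum_{t=1}^{T-2}\|\nabla f_\mu(x_t)\|_2^2\le \frac{6\bar\Delta_0}{\alpha(T-2)}+2U^2+8B^2 .
\]
Finally, $\|\nabla f\|_2^2\le 2\|\nabla f_\mu\|_2^2+2L^2\mu^2$ by Lemma~\ref{lem:hp-shared-smoothing-bias}. This gives the claimed bound with constant $12$ in front of $\bar\Delta_0/(\alpha(T-2))$. That overshoots the target constant $8$, so this term needs a more careful count: retain $f_\mu(x_{T-1})-f_*\ge -L\mu^2/2$ but bound $\Phi_1-f_\mu$-type gaps directly rather than via $\bar\Delta_0$, or absorb the extra amount as the statement permits. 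The remaining terms $4U^2+16B^2+2L^2\mu^2$ match as stated.
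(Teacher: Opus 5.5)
Your localization step is essentially the paper's argument. The paper phrases it as a first-exit contradiction and you phrase it as a forward induction, but both break the circularity the same way. You apply Lemma~\ref{lem:momentum-descent}, which needs only $\mathcal L_k\cap\mathcal E$, together with $f_\mu(x_k)+\alpha\|u_k\|_2^2\le\Phi_k$, to certify $x_{k+1}$ before invoking Theorem~\ref{thm:momentum-lyapunov-step}.

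One small correction there: Lemma~\ref{lem:momentum-initialization} gives $\mathcal E_0\subseteq\mathcal I$, not the reverse. So you cannot import $f(x_1)-f_*\le 3\bar\Delta_0$ from $\mathcal E_0$. You do not need to, though. On $\mathcal I$ you already have $f_\mu(x_1)\le\Phi_1\le f_*+2\bar\Delta_0$, and Lemma~\ref{lem:hp-shared-smoothing-bias} then gives $f(x_1)-f_*\le 3\bar\Delta_0$ directly.

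The genuine gap is in Step 2: as written, you do not prove the stated inequality. You lower-bound $f_\mu(x_{T-1})$ by $f_*-L\mu^2/2$. This produces $12\bar\Delta_0/(\alpha(T-2))$, or $(8\bar\Delta_0+2L\mu^2)/(\alpha(T-2))$ if you keep the terms separate. The remedies you suggest ("bound $\Phi_1-f_\mu$-type gaps directly", "absorb the extra amount") are not arguments. The constant $8$ in the statement is explicit, so there is nothing to absorb into.

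The missing observation is that $f_\mu(x)=\E_{v}[f(x+\mu v)]$ is an average of values of $f$, each at least $f_*$. Hence $f_\mu\ge f_*$ everywhere, with no smoothing loss. Then $\Phi_{T-1}\ge f_\mu(x_{T-1})\ge f_*$, and telescoping Theorem~\ref{thm:momentum-lyapunov-step} over $t=1,\ldots,T-2$ gives
\[
\frac{\alpha}{2}\sum_{t=1}^{T-2}\|\nabla f_\mu(x_t)\|_2^2\le \Phi_1-f_*+(T-2)\alpha(U^2+4B^2)\le 2\bar\Delta_0+(T-2)\alpha(U^2+4B^2).
\]
So the average of $\|\nabla f_\mu(x_t)\|_2^2$ is at most $4\bar\Delta_0/(\alpha(T-2))+2U^2+8B^2$. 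The step $\|\nabla f\|_2^2\le 2\|\nabla f_\mu\|_2^2+2L^2\mu^2$ then yields exactly $8\bar\Delta_0/(\alpha(T-2))+4U^2+16B^2+2L^2\mu^2$, which is what the paper does.
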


\begin{proof}
By Lemmas~\ref{lem:momentum-initialization} and
\ref{lem:momentum-stopped-fluctuation},
\(\Pr(\mathcal I\cap\mathcal E)\ge1-\delta\). Work on
\(\mathcal I\cap\mathcal E\).

We first prove localization. Suppose localization fails before time \(T\), and
let
\[
  r
  :=
  \min
  \{t\in\{1,\ldots,T-1\}:f(x_t)-f_*>4\bar\Delta_0\}.
\]
The initialization proof gives \(f(x_1)-f_*<4\bar\Delta_0\), so \(r\ge2\) and
\(\mathcal L_{r-1}\) holds. For each \(t=1,\ldots,r-2\), we have
\(\mathcal L_{t+1}\), so Theorem~\ref{thm:momentum-lyapunov-step} applies.
Dropping the negative terms and summing,
\[
  \Phi_{r-1}
  \le
  \Phi_1+(r-2)\alpha(U^2+4B^2)
  \le
  f_*+2\bar\Delta_0+(r-2)\alpha(U^2+4B^2).
\]
By Lemma~\ref{lem:momentum-descent} at time \(r-1\),
\[
  f_\mu(x_r)
  \le
  f_\mu(x_{r-1})
  +
  \alpha\|u_{r-1}\|_2^2
  +
  \alpha U^2
  \le
  \Phi_{r-1}+\alpha U^2,
\]
where we used \(2\alpha/(1-\beta)\ge\alpha\). Hence
\[
  f_\mu(x_r)-f_*
  \le
  2\bar\Delta_0+(T-1)\alpha(U^2+4B^2).
\]
Using \(f(x)\le f_\mu(x)+L\mu^2/2\) and
\eqref{eq:momentum-localization-condition}, we get
\[
  f(x_r)-f_*
  \le
  2\bar\Delta_0
  +(T-1)\alpha(U^2+4B^2)
  +
  \frac{L\mu^2}{2}
  \le
  4\bar\Delta_0,
\]
contradicting the definition of \(r\). Thus \(\mathcal L_{T-1}\) holds.

Now Theorem~\ref{thm:momentum-lyapunov-step} applies for all
\(t=1,\ldots,T-2\). Summing and using \(f_\mu(x_{T-1})\ge f_*\) gives
\[
  \frac\alpha2
  \sum_{t=1}^{T-2}\|\nabla f_\mu(x_t)\|_2^2
  \le
  \Phi_1-f_*+(T-2)\alpha(U^2+4B^2).
\]
Since \(\Phi_1-f_*\le2\bar\Delta_0\),
\[
  \frac1{T-2}
  \sum_{t=1}^{T-2}\|\nabla f_\mu(x_t)\|_2^2
  \le
  \frac{4\bar\Delta_0}{\alpha(T-2)}
  +
  2U^2
  +
  8B^2.
\]
Finally, \(\|\nabla f(x_t)\|_2\le\|\nabla f_\mu(x_t)\|_2+L\mu\), and hence
\[
  \|\nabla f(x_t)\|_2^2
  \le
  2\|\nabla f_\mu(x_t)\|_2^2
  +
  2L^2\mu^2.
\]
Averaging proves the claim.
\end{proof}

\subsection{Complexity consequence}

We first state the complexity in terms of
\(\bar\Delta_0=\Delta_0+L\mu^2/2\). Under the final smoothing choice for
\(\mu\), this recovers the simpler dependence on \(\Delta_0\).

\begin{corollary}[Complexity of momentum RSC-ZO]
\label{cor:momentum-complexity}
Let \(\epsilon\in(0,1)\), choose \(\mu\le\epsilon/(4Ld)\), and assume
\(\epsilon^2\le32L\bar\Delta_0\). Choose
\[
  \alpha=\frac{1-\beta}{16\sqrt3\,L},
  \qquad
  T
  =
  \left\lceil
    512\sqrt3\,
    \frac{L\bar\Delta_0}{(1-\beta)\epsilon^2}
  \right\rceil
  +2.
\]
Assume \(G\le c_0\epsilon\), \(U\le c_1\epsilon\), and
\(B\le c_1\epsilon\), for sufficiently small absolute constants
\(c_0,c_1>0\). Then, with probability at least \(1-\delta\),
\[
  \frac1{T-2}
  \sum_{t=1}^{T-2}\|\nabla f(x_t)\|_2^2
  \le
  \epsilon^2.
\]
Moreover, up to logarithmic factors and the lower constraints
\(M_0\ge\lambda_0\), \(M\ge(1-\beta)\lambda\), it is enough to choose
\[
  M_0
  =
  \widetilde\Theta
  \left(
    \left(
      \frac{
        \sqrt d(\sqrt{L\bar\Delta_0}+\sigma)+dL\mu
      }{\epsilon}
    \right)^{\frac{p}{p-1}}
  \right),
\]
and
\[
  M
  =
  \widetilde\Theta
  \left(
    (1-\beta)
    \left(
      \frac{
        \sqrt d(\sqrt{L\bar\Delta_0}+\sigma)+dL\mu
      }{\epsilon}
    \right)^{\frac{p}{p-1}}
  \right),
\]
where the hidden constants depend only on \(p\). Consequently, the total number
of stochastic function evaluations satisfies
\[
\begin{aligned}
  Q_{\rm MOM}
  &:=
  2M_0+2M(T-1) \\
  &=
  \widetilde O
  \left(
    \left(
      \frac{
        \sqrt d(\sqrt{L\bar\Delta_0}+\sigma)+dL\mu
      }{\epsilon}
    \right)^{\frac{p}{p-1}}
  \right) \\
  &\quad
  +
  \widetilde O
  \left(
    \frac{L\bar\Delta_0}{\epsilon^2}
    \left(
      \frac{
        \sqrt d(\sqrt{L\bar\Delta_0}+\sigma)+dL\mu
      }{\epsilon}
    \right)^{\frac{p}{p-1}}
  \right).
\end{aligned}
\]
Under \(\mu\le\epsilon/(4Ld)\), we have
\[
  \bar\Delta_0
  =
  \Delta_0+O\!\left(\frac{\epsilon^2}{Ld^2}\right),
  \qquad
  dS_\mu
  =
  \sqrt d(\sqrt{L\Delta_0}+\sigma)+O(\epsilon).
\]
Thus
\[
  Q_{\rm MOM}
  =
  \widetilde O
  \left(
    \left(
      \frac{\sqrt d(\sqrt{L\Delta_0}+\sigma)}{\epsilon}
    \right)^{\frac{p}{p-1}}
  \right)
  +
  \widetilde O
  \left(
    \frac{L\Delta_0}{\epsilon^2}
    \left(
      \frac{\sqrt d(\sqrt{L\Delta_0}+\sigma)}{\epsilon}
    \right)^{\frac{p}{p-1}}
  \right).
\]
Thus momentum reduces the running batch size by a factor of order \(1-\beta\),
while preserving the total query complexity up to logarithmic factors and the
additive warm-start cost.
\end{corollary}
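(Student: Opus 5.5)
The plan is to reduce everything to Theorem~\ref{thm:momentum-localized-convergence} and then check that the prescribed parameters satisfy its hypotheses: the stepsize condition of Theorem~\ref{thm:momentum-lyapunov-step}, the initialization condition \eqref{eq:momentum-init-condition}, and the localization condition \eqref{eq:momentum-localization-condition}. After that, we bound the right-hand side of the localized convergence bound by $\epsilon^2$.

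\textbf{Stepsize.} With $\alpha=(1-\beta)/(16\sqrt3 L)$ we have $\alpha\le 1/(32\sqrt3L)\le 1/(4L)$, so both $\alpha\le 1/(2L)$ and the hypothesis of Lemma~\ref{lem:momentum-initialization} hold. Since $\beta<1$, we also get $\alpha\le (1-\beta)/(4\sqrt3\beta L)$.

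\textbf{Localization (Loc).} Using $T-1\le T$ and $T\le 512\sqrt3 L\bar\Delta_0/((1-\beta)\epsilon^2)+3$, together with $\epsilon^2\le 32L\bar\Delta_0$ to absorb the additive constant, we get $(T-1)\alpha\lesssim \bar\Delta_0/\epsilon^2$ with an explicit constant (about $32$ up to the absorbed terms). Hence $U^2+4B^2\le 5c_1^2\epsilon^2$ gives (Loc) once $c_1$ is small enough.

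\textbf{Initialization (Init).} Here we bound $\frac{5\alpha}{4}G^2\le \frac{5}{4}\cdot\frac{c_0^2\epsilon^2}{4L}\le \bar\Delta_0/4$, using $\epsilon^2\le32L\bar\Delta_0$. The delicate term is
\[
\frac{2\alpha}{1-\beta}\Bigl(\beta((1+L\alpha)G+L\alpha H_\mu)+(1-\beta)B\Bigr)^2 .
\]
Since $2\alpha/(1-\beta)=1/(8\sqrt3L)$, the $G$- and $B$-contributions are $O(c_0^2\epsilon^2/L+c_1^2\epsilon^2/L)$, which is $\le\bar\Delta_0/4$. The $H_\mu$ contribution is $\frac{1}{8\sqrt3L}(L\alpha H_\mu)^2\le \frac{(1-\beta)^2}{8\sqrt3\cdot 768 L}\,2\bigl(2L\Delta_0+L^2\mu^2\bigr)$, and since $L^2\mu^2\le 2L\cdot\frac{L\mu^2}{2}$ this is a small multiple of $\bar\Delta_0$. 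Summing the three pieces gives (Init) with room to spare.

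\textbf{Final bound.} Theorem~\ref{thm:momentum-localized-convergence} now yields, with probability at least $1-\delta$,
\[
\frac{1}{T-2}\sum_{t=1}^{T-2}\|\nabla f(x_t)\|_2^2\le \frac{8\bar\Delta_0}{\alpha(T-2)}+4U^2+16B^2+2L^2\mu^2 .
\]
We bound each term:
\begin{itemize}
\item Since $T-2\ge 512\sqrt3L\bar\Delta_0/((1-\beta)\epsilon^2)$, the first term is at most $8\cdot16\sqrt3\,\epsilon^2/(512\sqrt3)=\epsilon^2/4$.
\item Using $\mu\le\epsilon/(4Ld)$ and $d\ge1$, we get $2L^2\mu^2\le \epsilon^2/8$.
\item With $c_1$ small, $4U^2+16B^2\le 20c_1^2\epsilon^2\le \epsilon^2/2$.
\end{itemize}
The total is at most $\epsilon^2$.

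\textbf{Batch sizes.} It remains to show that the stated choices of $M_0$ and $M$ make $G\le c_0\epsilon$, $U\le c_1\epsilon$ and $B\le c_1\epsilon$. This is a direct inversion. Write $q=(1-\beta)\lambda/M$. Then $U\le c_1\epsilon$ requires $C_U\, dS_\mu\, q^{(p-1)/p}\sqrt{1+\log(1/q)}\le c_1\epsilon$, i.e.
\[
M\ge (1-\beta)\lambda\,\bigl(C\,dS_\mu/\epsilon\bigr)^{p/(p-1)}
\]
times a logarithmic factor. This is the $\widetilde\Theta$ choice in the statement, because the logarithm is itself $O(\log(dS_\mu/\epsilon))$. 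The $B$ condition is implied by the same choice, and the analogous inversion with $q_0=\lambda_0/M_0$ handles $G$.

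\textbf{Expected obstacle.} The main difficulty is bookkeeping rather than conceptual. In the (Init) check, the $H_\mu$ term must be controlled by $\bar\Delta_0$ \emph{without} any smallness of $\epsilon$; this works only because $\alpha^2/(1-\beta)$ carries a factor $(1-\beta)$ and the absolute constant $16\sqrt3$ is large. Separately, one must make sure the logarithmic self-reference in inverting $U\le c_1\epsilon$ is legitimately absorbed into $\widetilde\Theta$.
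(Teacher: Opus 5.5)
Your proposal is correct and follows the paper's proof. You verify the stepsize hypothesis of Theorem~\ref{thm:momentum-lyapunov-step} together with \eqref{eq:momentum-init-condition} and \eqref{eq:momentum-localization-condition}, apply Theorem~\ref{thm:momentum-localized-convergence}, bound its right-hand side by $\epsilon^2$ (your $(T-1)\alpha\lesssim 32\bar\Delta_0/\epsilon^2$, the $O((1-\beta)^2\bar\Delta_0)$ bound on the $H_\mu$ part of (Init) via $2L\Delta_0+L^2\mu^2=2L\bar\Delta_0$, and the $\epsilon^2/4+\epsilon^2/8+\epsilon^2/2$ split make explicit what the paper leaves as ``choosing $c_0,c_1$ small enough''), and then invert $G$, $U$, $B$ to get $M_0$ and $M$; the final substitution into $Q_{\rm MOM}$, which you leave implicit, is the same one-line step as in the paper.
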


\begin{proof}
We verify the hypotheses of
Theorem~\ref{thm:momentum-localized-convergence}. The chosen stepsize satisfies
\[
  \alpha
  =
  \frac{1-\beta}{16\sqrt3\,L}
  \le
  \min
  \left\{
    \frac1{4L},
    \frac{1-\beta}{4\sqrt3\,\beta L}
  \right\}
\]
for all \(\beta\in[1/2,1)\). Also, by smoothness and
\(\epsilon^2\le32L\bar\Delta_0\),
\[
  H_\mu
  =
  \sqrt{2L\Delta_0}+L\mu
  \le
  C\sqrt{L\bar\Delta_0}.
\]
Using \(G\le c_0\epsilon\), \(B\le c_1\epsilon\), and the choice of \(\alpha\),
the initialization condition is bounded by
\[
  C
  \left(
    c_0^2+c_1^2+\frac{1}{16^2\cdot3}
  \right)\bar\Delta_0.
\]
The numerical constant in the stepsize is chosen so that, after taking
\(c_0,c_1\) sufficiently small, the last display is at most
\(\bar\Delta_0\). Similarly, \((T-1)\alpha\le C\bar\Delta_0/\epsilon^2\), and
therefore
\[
  (T-1)\alpha(U^2+4B^2)
  \le
  Cc_1^2\bar\Delta_0
  \le
  \bar\Delta_0
\]
after decreasing \(c_1\). Hence the localization condition holds.

Theorem~\ref{thm:momentum-localized-convergence} gives
\[
  \frac1{T-2}
  \sum_{t=1}^{T-2}\|\nabla f(x_t)\|_2^2
  \le
  \frac{8\bar\Delta_0}{\alpha(T-2)}
  +
  4U^2
  +
  16B^2
  +
  2L^2\mu^2.
\]
The chosen \(T\), the assumptions on \(U,B\), and
\(\mu\le\epsilon/(4Ld)\) make the right-hand side at most \(\epsilon^2\) after
choosing \(c_0,c_1\) small enough.

It remains to solve \(G\le c_0\epsilon\), \(U\le c_1\epsilon\), and
\(B\le c_1\epsilon\). Since
\(dS_\mu=\sqrt d(\sqrt{L\bar\Delta_0}+\sigma)+dL\mu\), the stated choices of
\(M_0\) and \(M\) satisfy these inequalities up to logarithmic factors. Finally,
each two-point direction uses two stochastic function evaluations, so
\(Q_{\rm MOM}=2M_0+2M(T-1)\). Substituting the estimates for \(M_0,M,T\) proves
the complexity bound.
\end{proof}

\begin{remark}[Interpretation]
The running-phase concentration depends on
\[
  \frac{(1-\beta)\lambda}{M}
\]
rather than \(\lambda/M\). Equivalently, the momentum buffer has the same
concentration behavior as an estimator with effective running batch size
\[
  M_{\rm eff}\asymp \frac{M}{1-\beta}.
\]
This explains why the running batch size can be reduced by a factor of order
\(1-\beta\). The reduction is coupled to the stability condition
\(\alpha\lesssim(1-\beta)/L\): choosing \(\beta\) closer to one reduces the
running batch size, but also forces a proportionally smaller stepsize. The
separate warm start is needed because the first momentum vector does not yet
benefit from time averaging.
\end{remark}

\section{Experiments: Clipping Before versus After Aggregation}
\label{sec:experiments}

We now provide a controlled experiment designed to isolate the mechanism behind
scalar clipping in stochastic zeroth-order optimization. The goal is not to
benchmark on a difficult objective, but to test a specific prediction of our
analysis: under heavy-tailed noise, clipping each scalar directional derivative
before aggregation can be substantially more effective than clipping only the
final batched vector estimator.

The latter baseline is important because it is the most direct analogue of
standard gradient clipping in SGD. In first-order stochastic optimization, one
observes a stochastic gradient vector and clips its norm before taking a step.
The corresponding zeroth-order analogue is therefore to first construct the
batched gradient proxy
\[
    g_{\rm raw}
    =
    \frac dM \sum_{\ell=1}^M Y_\ell u_\ell
\]
and then apply vector clipping,
\[
    g_{\rm vec}
    =
    \operatorname{clip}_{r_{\rm vec}}(g_{\rm raw}).
\]
Our method clips earlier, at the level of each scalar directional derivative
\(Y_\ell\), before averaging. We also include the unclipped estimator
\(g_{\rm raw}\), which corresponds to the standard MeZO-style zeroth-order
update without clipping. Thus, the experiment compares three natural choices:
no clipping, clipping after constructing a gradient-like vector as in standard
SGD gradient clipping, and clipping each zeroth-order directional measurement
before aggregation.

\paragraph{Experimental setup.}
We consider the quadratic objective
\[
    f(x) = \frac12 \|x\|_2^2,
\]
observed through the stochastic zeroth-order oracle
\[
    F(x;\zeta)
    =
    \frac12 \|x\|_2^2 + \langle \zeta,x\rangle .
\]
The noise is sparse and heavy-tailed:
\[
    \zeta = s A e_J,
\]
where \(s\) is uniform on \(\{-1,+1\}\), \(J\) is uniform on
\(\{1,\ldots,d\}\), and \(A\) follows a Pareto distribution with tail exponent
\(p\). Thus
\[
    \nabla F(x;\zeta)-\nabla f(x)=\zeta.
\]
For \(p\in(1,2)\), this noise has finite first moment but infinite variance.
The random sign ensures that \(\mathbb E[\zeta]=0\), so the population objective
is exactly \(f(x)=\frac12\|x\|_2^2\). Since the perturbation
\(\langle \zeta,x\rangle\) is linear in \(x\), every sample objective remains
\(1\)-smooth.

For a direction \(u_\ell\), the shared-randomness two-point estimator satisfies
\[
    Y_\ell
    =
    \frac{
    F(x+\mu u_\ell;\zeta_\ell)
    -
    F(x-\mu u_\ell;\zeta_\ell)
    }{2\mu}
    =
    \langle x,u_\ell\rangle
    +
    \langle \zeta_\ell,u_\ell\rangle .
\]
We compare three estimators:
\[
\begin{cases}
\displaystyle
g_{\rm raw}
=
\frac dM \sum_{\ell=1}^M Y_\ell u_\ell,
& \text{raw / MeZO-style estimator}, \\[1.2em]
\displaystyle
g_{\rm vec}
=
\operatorname{clip}_{r_{\rm vec}}(g_{\rm raw}),
& \text{vector clipping after aggregation}, \\[1.2em]
\displaystyle
g_{\rm sc}
=
\frac dM \sum_{\ell=1}^M \psi_\tau(Y_\ell)u_\ell,
& \text{scalar clipping before aggregation}.
\end{cases}
\]

\paragraph{Why this comparison matters.}
The distinction between \(g_{\rm vec}\) and \(g_{\rm sc}\) is central.
Final-vector clipping controls only the norm of the already-aggregated vector.
Indeed, when \(\|g_{\rm raw}\|_2>r_{\rm vec}\),
\[
    \operatorname{clip}_{r_{\rm vec}}(g_{\rm raw})
    =
    r_{\rm vec}\frac{g_{\rm raw}}{\|g_{\rm raw}\|_2},
\]
so the direction of \(g_{\rm raw}\) is preserved. Therefore, if a few extreme
directional derivatives \(Y_\ell\) dominate the sum, final-vector clipping can
make the step smaller but cannot repair the corrupted direction.

In contrast, scalar clipping acts before aggregation:
\[
    Y_\ell \mapsto \psi_\tau(Y_\ell).
\]
Thus each directional contribution is bounded before it enters the average. This
prevents a single heavy-tailed directional derivative from dominating the
estimated direction.

We emphasize that clipping each individual vector \(dY_\ell u_\ell\) before
averaging would be essentially equivalent to scalar clipping, since
\[
    \|dY_\ell u_\ell\|_2 = d|Y_\ell|.
\]
The meaningful baseline is therefore clipping the final batched estimator, which
is the natural analogue of standard gradient clipping.

\paragraph{Fair tuning.}
For each method, we tune the stepsize using \(6\) validation seeds. For
final-vector clipping, we additionally tune the vector clipping radius
\(r_{\rm vec}\), while for scalar clipping we tune the scalar threshold
\(\tau\). We then fix the selected hyperparameters and evaluate all methods on
\(20\) independent seeds. The reported curves and tables are computed only from
these evaluation seeds, not from the validation seeds used for tuning.

\begin{figure}[t]
    \centering
    \safeincludegraphics[width=0.62\linewidth]{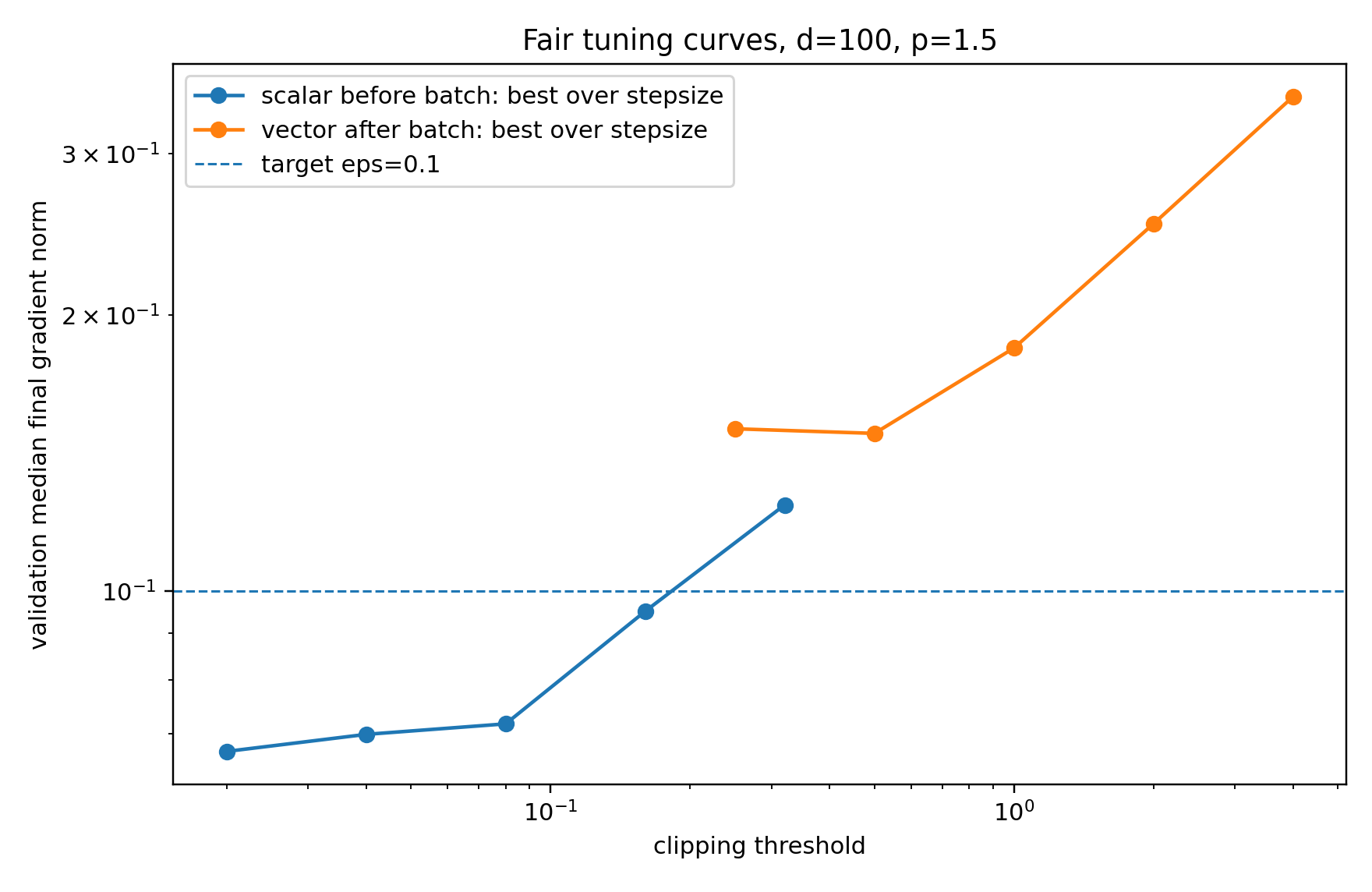}
    \caption{
    Tuning curves for the representative setting \(d=100\), \(p=1.5\).
    For each scalar threshold \(\tau\) and vector clipping radius \(r_{\rm vec}\),
    we report the best validation performance over stepsizes. Even after tuning,
    scalar clipping before aggregation achieves a lower final stationarity
    measure than final-vector clipping after aggregation.
    }
    \label{fig:fair-tuning}
\end{figure}

\paragraph{Representative optimization behavior.}
Figure~\ref{fig:representative-optimization} shows the optimization trajectory
for \(d=100\), \(p=1.5\), \(M=256\), and target accuracy \(\epsilon=0.1\).
The raw estimator is unstable under the infinite-variance noise. Final-vector
clipping stabilizes the iterates, but the method plateaus above the target
accuracy. In contrast, scalar clipping before aggregation continues to decrease
the stationarity measure and reaches the target reliably.

\begin{figure}[t]
    \centering
    \safeincludegraphics[width=0.72\linewidth]{EXP/representative_optimization_curves.png}
    \caption{
    Representative optimization curves for \(d=100\), \(p=1.5\). We plot
    \(\|\nabla f(x_t)\|_2=\|x_t\|_2\) versus the number of iterations. Raw
    zeroth-order optimization is unstable under heavy-tailed noise. Final-vector
    clipping stabilizes the method but plateaus above the target. Scalar clipping
    before aggregation reaches the target and obtains the lowest final
    stationarity measure.
    }
    \label{fig:representative-optimization}
\end{figure}

The corresponding median final stationarity values are:
\[
\begin{array}{c|c|c}
\text{method} & \text{median final } \|\nabla f(x_T)\|_2 & \text{success rate} \\
\hline
\text{raw} & 0.662 & 0\% \\
\text{vector after batch} & 0.142 & 0\% \\
\text{scalar before batch} & 0.065 & 100\%
\end{array}
\]
Thus final-vector clipping helps significantly relative to the raw estimator,
but scalar clipping is the only method that reliably reaches the target in this
setting.

\paragraph{Direction diagnostic.}
To verify the mechanism, we measure the cosine similarity between each estimator
and the true gradient:
\[
    \frac{\langle g,\nabla f(x)\rangle}
    {\|g\|_2\|\nabla f(x)\|_2}.
\]
Figure~\ref{fig:direction-diagnostic} shows that the raw and final-vector-clipped
estimators have nearly identical cosine distributions. This is expected because
final-vector clipping only rescales \(g_{\rm raw}\) and therefore preserves its
direction. By contrast, scalar clipping substantially improves alignment with
the true gradient, increasing the median cosine from approximately \(0.317\) to
approximately \(0.543\).

\begin{figure}[t]
    \centering
    \safeincludegraphics[width=0.62\linewidth]{EXP/cosine_direction_diagnostic.png}
    \caption{
    Direction diagnostic for \(d=100\), \(p=1.5\). We report the cosine similarity
    between each stochastic estimator and the true gradient. Final-vector
    clipping has essentially the same alignment as the raw estimator because it
    preserves the raw direction. Scalar clipping before aggregation yields much
    better alignment, explaining its improved optimization performance.
    }
    \label{fig:direction-diagnostic}
\end{figure}

This confirms that the advantage of scalar clipping is not merely norm control.
The improvement comes from changing the aggregation itself: extreme directional
derivatives are controlled before they can determine the direction of the
batched estimator.

\paragraph{Dimension sweep.}
We next vary the ambient dimension \(d\) while fixing the tail exponent to
\(p=1.5\). Figure~\ref{fig:dimension-sweep} reports the median final
stationarity and the success rate. The raw estimator degrades rapidly with
dimension. Final-vector clipping improves stability but fails to reach the
target for larger dimensions. Scalar clipping remains below the target across
all tested dimensions.

\begin{figure}[t]
    \centering
    \begin{minipage}{0.48\linewidth}
        \centering
        \safeincludegraphics[width=\linewidth]{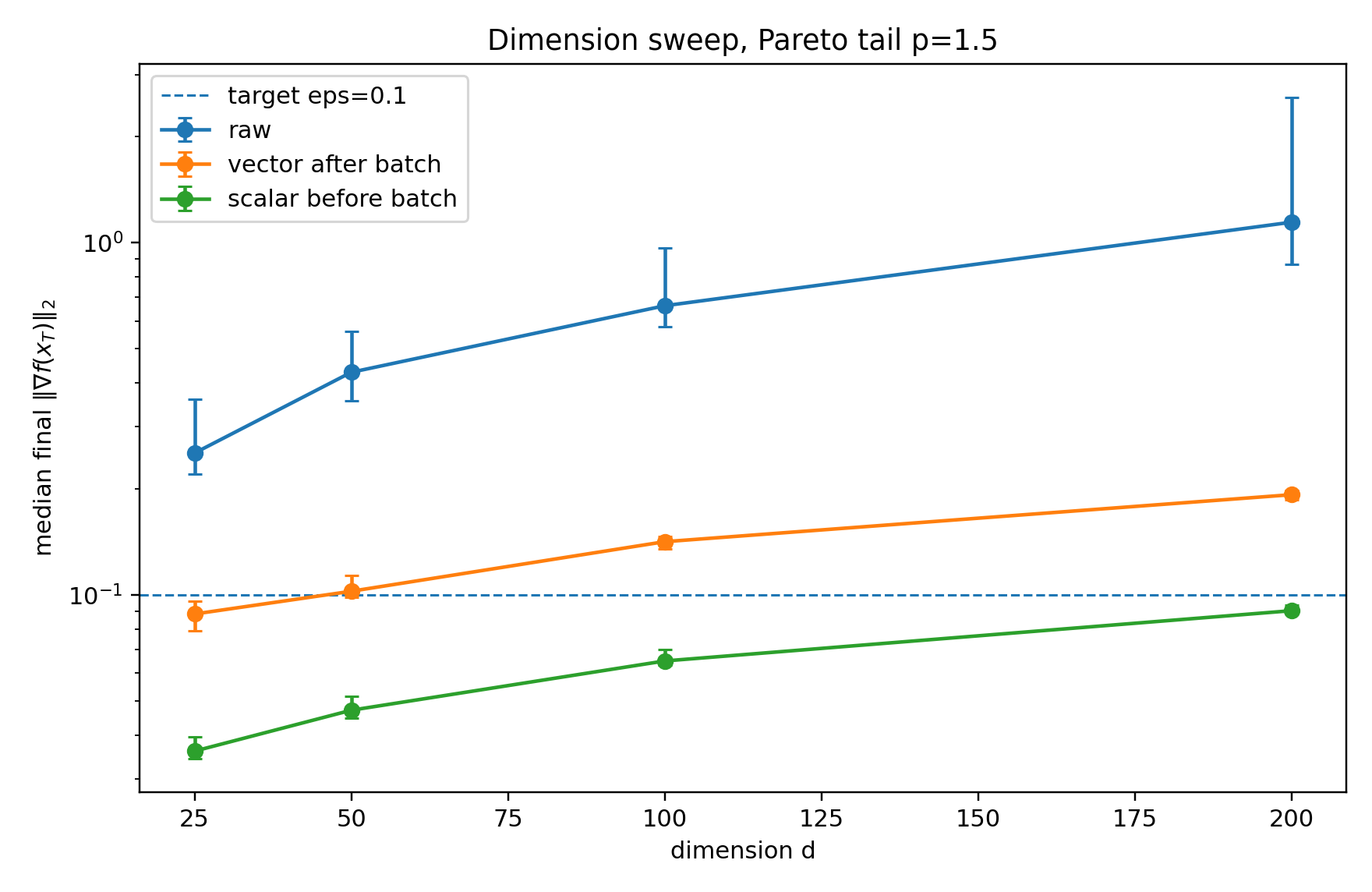}
    \end{minipage}
    \hfill
    \begin{minipage}{0.48\linewidth}
        \centering
        \safeincludegraphics[width=\linewidth]{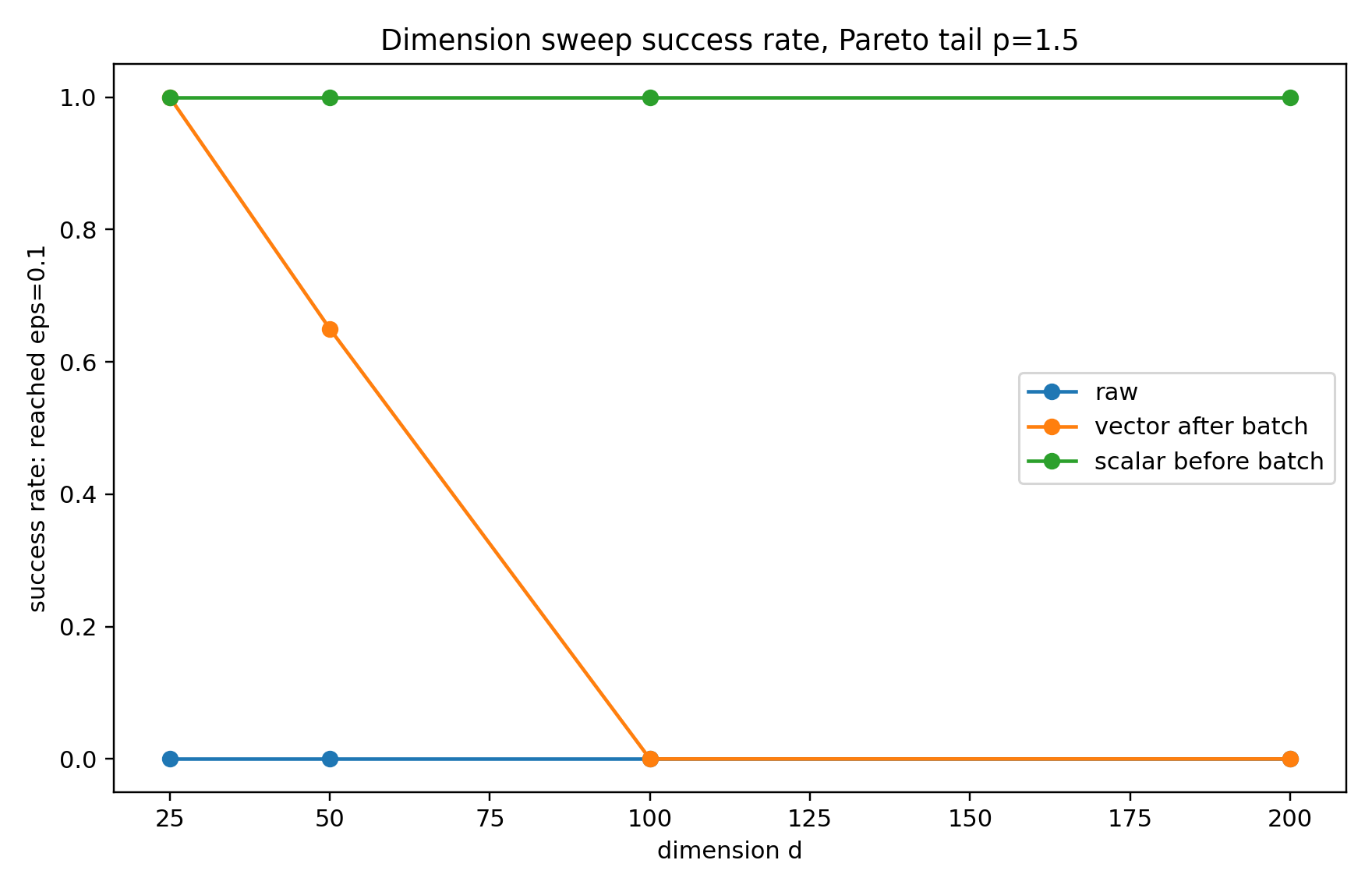}
    \end{minipage}
    \caption{
    Dimension sweep for Pareto tail exponent \(p=1.5\). Left: median final
    stationarity \(\|\nabla f(x_T)\|_2\). Right: success rate for reaching
    \(\epsilon=0.1\). Scalar clipping before aggregation remains robust as the
    dimension grows, while raw zeroth-order optimization and final-vector
    clipping degrade.
    }
    \label{fig:dimension-sweep}
\end{figure}

The median final stationarity values are:
\[
\begin{array}{c|ccc}
d
& \text{raw}
& \text{vector after batch}
& \text{scalar before batch}
\\
\hline
25  & 0.253 & 0.088 & 0.036 \\
50  & 0.429 & 0.102 & 0.047 \\
100 & 0.662 & 0.142 & 0.065 \\
200 & 1.144 & 0.193 & 0.090
\end{array}
\]
The success rates show the same trend: scalar clipping succeeds on all tested
dimensions, while vector clipping succeeds only at smaller dimensions and raw
zeroth-order optimization fails throughout.

\paragraph{Tail-index sweep.}
Finally, we vary the Pareto tail exponent \(p\) while fixing \(d=100\).
Smaller \(p\) corresponds to heavier tails. Figure~\ref{fig:tail-sweep} shows
that the advantage of scalar clipping is strongest in the heavy-tailed regime
and decreases as the noise becomes lighter-tailed.

\begin{figure}[t]
    \centering
    \begin{minipage}{0.48\linewidth}
        \centering
        \safeincludegraphics[width=\linewidth]{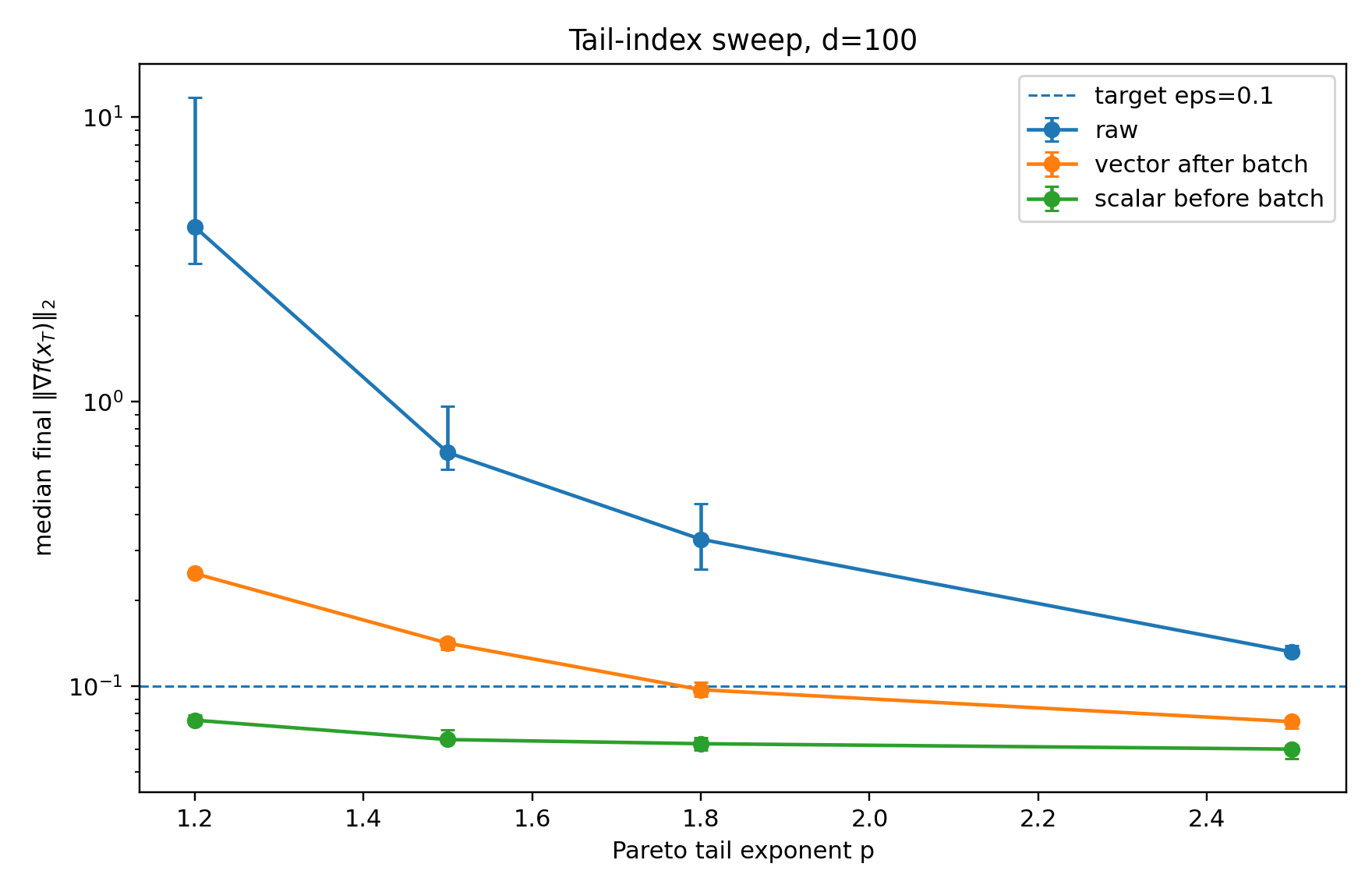}
    \end{minipage}
    \hfill
    \begin{minipage}{0.48\linewidth}
        \centering
        \safeincludegraphics[width=\linewidth]{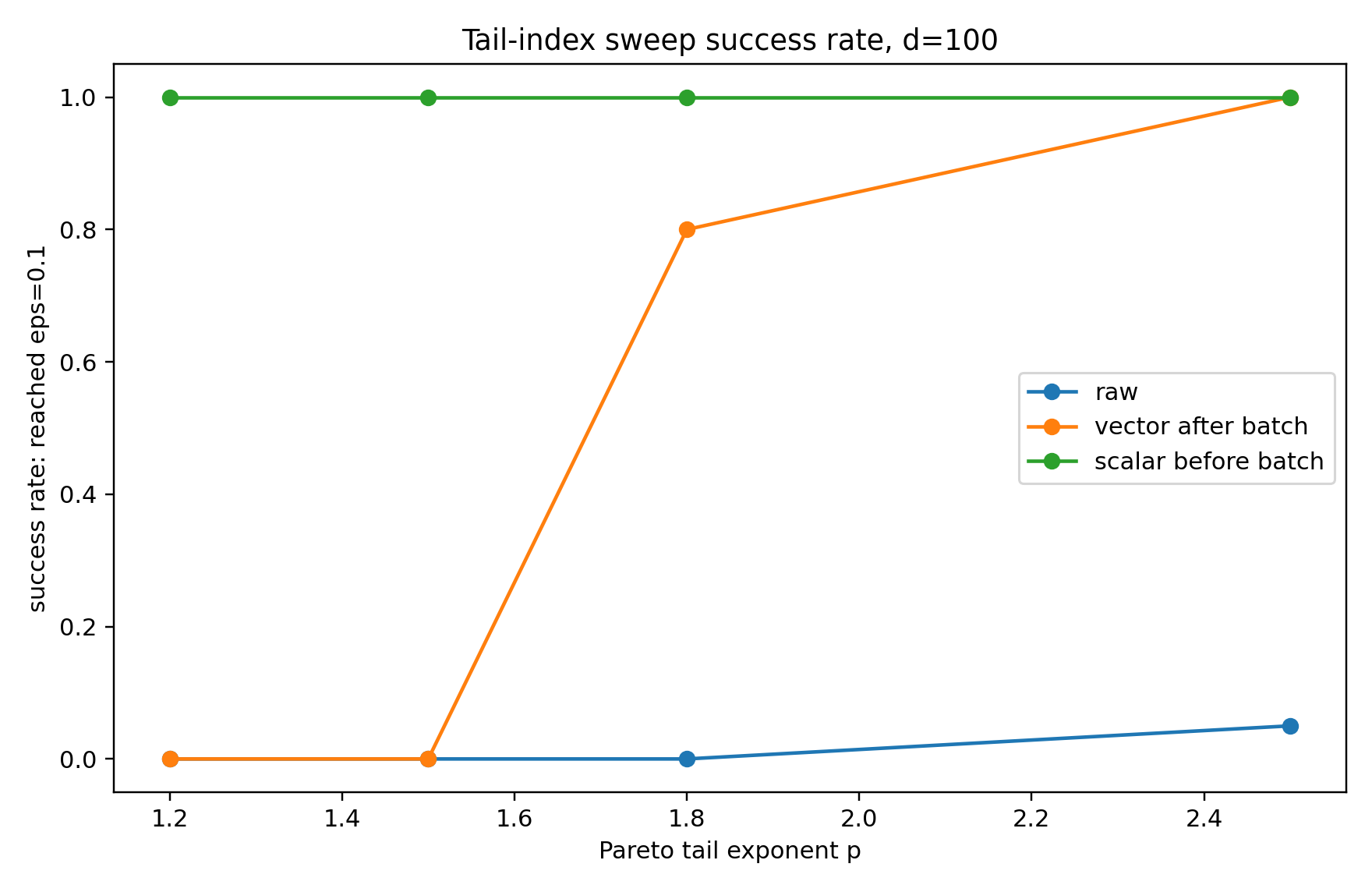}
    \end{minipage}
    \caption{
    Tail-index sweep for \(d=100\). Smaller \(p\) corresponds to heavier-tailed
    noise. Scalar clipping gives the largest improvement in the infinite-variance
    regime \(p<2\). We also include \(p=2.5\) as a finite-variance sanity check,
    where the gap becomes smaller but scalar clipping remains competitive.
    }
    \label{fig:tail-sweep}
\end{figure}

The median final stationarity values are:
\[
\begin{array}{c|ccc}
p
& \text{raw}
& \text{vector after batch}
& \text{scalar before batch}
\\
\hline
1.2 & 4.120 & 0.249 & 0.076 \\
1.5 & 0.662 & 0.142 & 0.065 \\
1.8 & 0.329 & 0.097 & 0.063 \\
2.5 & 0.132 & 0.075 & 0.060
\end{array}
\]
For \(p=1.2\), the raw estimator fails dramatically and final-vector clipping is
not sufficient to reach the target. Scalar clipping remains stable and achieves
a substantially lower stationarity measure. As \(p\) increases and the tails
become lighter, the gap narrows, as expected. The case \(p=2.5\) lies outside
our weak-\(L_p\), \(p\le 2\), theoretical regime and is included only as a
finite-variance sanity check.

\paragraph{Outlier dominance.}
We also record the ratio between the largest directional derivative and the
median directional derivative in each batch. Figure~\ref{fig:outlier-diagnostic}
shows that this ratio is often large, confirming that individual directional
samples can dominate the raw average under heavy-tailed noise. This supports the
need to clip before aggregation.

\begin{figure}[t]
    \centering
    \safeincludegraphics[width=0.62\linewidth]{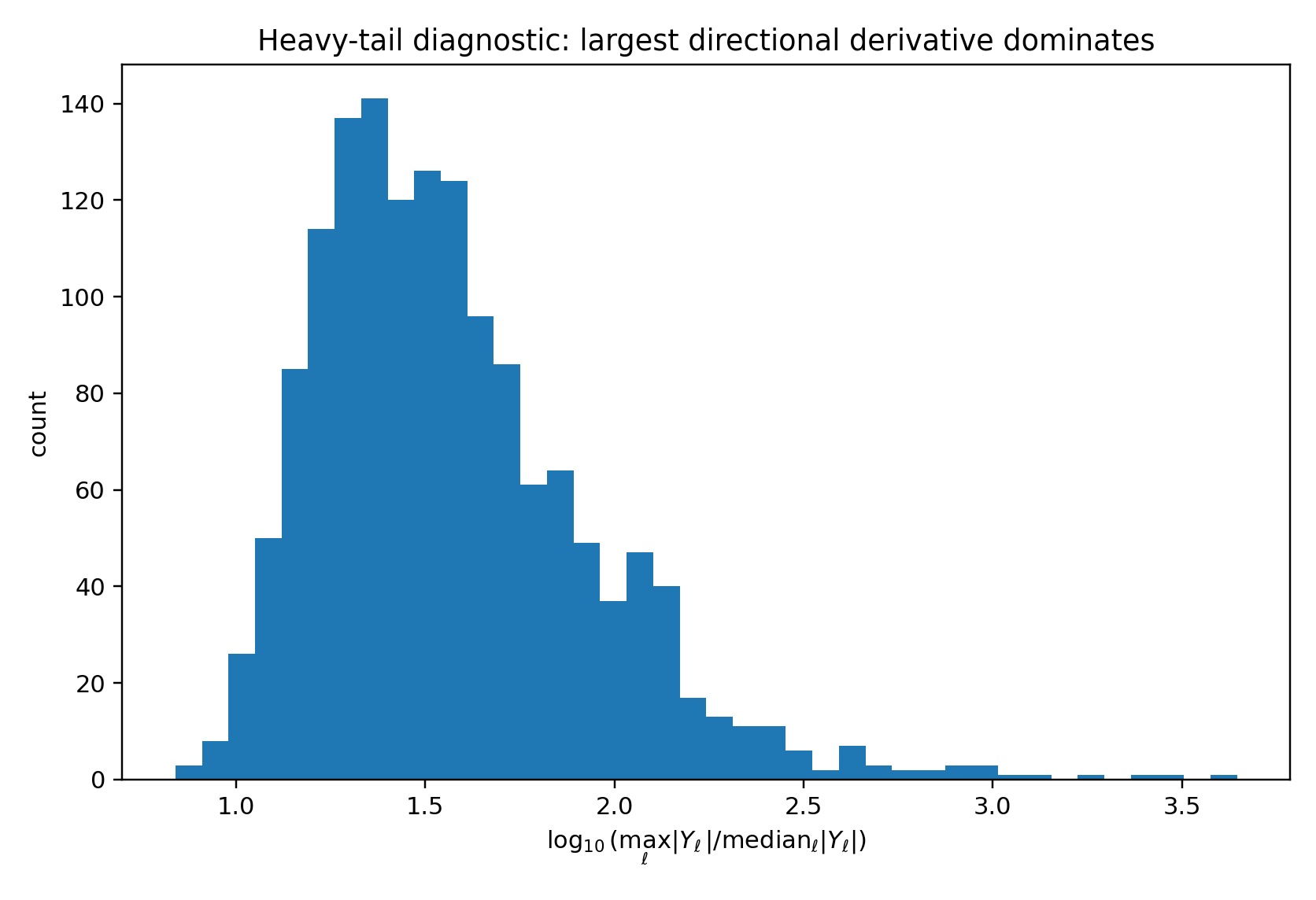}
    \caption{
    Outlier dominance diagnostic for the representative setting. We plot the
    distribution of
    \(\log_{10}(\max_\ell |Y_\ell|/\operatorname{median}_\ell |Y_\ell|)\).
    Large values indicate that a small number of directional derivatives can
    dominate the raw batched estimator. Scalar clipping prevents these outliers
    from controlling the aggregate direction.
    }
    \label{fig:outlier-diagnostic}
\end{figure}

\paragraph{Summary.}
These experiments support the estimator-level mechanism underlying our theory.
Final-vector clipping is useful because it controls the norm of the stochastic
zeroth-order estimate, and indeed it improves over the raw estimator. However,
it preserves the direction of the already-aggregated heavy-tailed estimate.
Scalar clipping acts earlier: it clips each noisy directional derivative before
aggregation. This yields better alignment with the true gradient, lower final
stationarity, and higher success rates across dimensions and tail indices.

\section{Additional Details for MeZO Experiments}
\label{app:mezo-details}

This appendix gives the implementation details for the RoBERTa-large fine-tuning
experiments in Section~\ref{sec:mezo-experiments}. We follow the prompt-based
MeZO setup of \citet{malladi2023mezo}. All methods are trained for 100K steps
on \(k=512\) examples per task. Standard deviations are computed over five
seeds.

For the multi-direction RSC-ZO variants, the estimator averages over
\(M\in\{1,2,4\}\) random directions. To keep the total forward-evaluation budget
fixed, the data batch size is scaled inversely with \(M\). Thus, increasing
\(M\) improves the directional estimate but reduces the number of examples used
per update.

\begin{table}[h]
\centering
\begin{tabular}{lc}
\toprule
\textbf{Hyperparameter} & \textbf{Value} \\
\midrule
Training steps & 100K \\
Base batch size & 16 \\
Learning rate & \(10^{-6}\) \\
Smoothing parameter \(\mu\) & \(10^{-3}\) \\
Weight decay & 0 \\
Number of seeds & 5 \\
\bottomrule
\end{tabular}
\vspace{6pt}
\caption{Shared hyperparameters for RoBERTa-large fine-tuning experiments. For
multi-direction RSC-ZO, the data batch size is scaled inversely with \(M\) to
keep the forward-evaluation budget fixed.}
\label{tab:hparams}
\end{table}

\begin{table}[h]
\centering
\begin{tabular}{lccc}
\toprule
& \textbf{SNLI} & \textbf{MNLI} & \textbf{TREC} \\
\midrule
Best \(\tau\) & 8 & 32 & 32 \\
\bottomrule
\end{tabular}
\vspace{6pt}
\caption{Best clipping threshold \(\tau\) for RSC-ZO on each task, selected by
grid search over \(\{2,4,8,16,32,64\}\).}
\label{tab:tau}
\end{table}

\end{document}